\newenvironment{customproof}[1]{%
  \par\pushQED{\qed}%
  \normalfont\topsep6pt \trivlist
  \item[\hskip\labelsep\itshape
    Proof of #1\@addpunct{.}]\ignorespaces
}{%
  \popQED\endtrivlist\@endpefalse
}
\let\P\undefined 
\newcommand{\norm}[1]{\left\lVert#1\right\rVert}
\newcommand{\nor}[1]{\left\lvert#1\right\rvert}
\newtheorem{Theorem}{Theorem}
\numberwithin{Theorem}{section}
\newtheorem{Proposition}[Theorem]{Proposition}
\newtheorem{Lemma}[Theorem]{Lemma}
\newtheorem{Corollary}[Theorem]{Corollary}
\theoremstyle{definition}
\newtheorem{Definition}[Theorem]{Definition}
\theoremstyle{remark}
\newtheorem{Remark}[Theorem]{Remark}
\numberwithin{equation}{section}
\newcommand{\Cza}{{C^{0,\alpha}}}
\newcommand{\Coa}{{C^{1,\alpha}}}
\newcommand{\Cta}{{C^{2,\alpha}}}
 \DeclareMathOperator{\C}{\mathcal{C}}
 \DeclareMathOperator{\R}{\mathbb{R}}
\DeclareMathOperator*{\tr}{tr}
\DeclareMathOperator*{\osc}{osc}
\newcommand*{\Mp}{\mathcal{M}^{+}}
\newcommand{\Mm}{\mathcal{M}^{-}}
\newcommand{\p}{\partial}
\newcommand{\A}{\mathcal{A}}
\newcommand{\P}{\mathcal{P}}
\newcommand{\uu}{\underline{u}}
\newcommand{\bu}{\overline{u}}
\newcommand{\vep}{\varepsilon}
\newcommand{\dini}{\operatorname{Dini}}
\newcommand{\usc}{\operatorname{USC}}
\newcommand{\lsc}{\operatorname{LSC}}
\begin{document}
\title{Fully nonlinear parabolic fixed transmission problems}

\author{
David Jesus
\and
Mar\'ia Soria-Carro
}
\newcommand{\Addresses}{{
  \bigskip
  \footnotesize

    David Jesus, \textsc{Dipartimento di Matematica,
Universit\`a di Bologna\\ 
Piazza di Porta San Donato 5, 40126 Bologna, Italy}\par\nopagebreak
  \textit{E-mail address:} {\tt david.jesus2@unibo.it}\\
  \medskip
  
    Mar\'ia Soria-Carro, \textsc{Department of Mathematics, Rutgers University\\ 
110 Frelinghuysen Rd., Piscataway, NJ
08854, USA}\par\nopagebreak
  \textit{E-mail address:} {\tt maria.soriacarro@rutgers.edu}\\
 }

}

\date{\today}

\maketitle
{\bf Abstract.} 
 We consider transmission problems for parabolic equations governed by distinct fully nonlinear operators on each side of a time-dependent interface. 
 We prove that if the interface is $C^{1,\alpha}$, in the parabolic sense, then viscosity solutions are 
 piecewise $C^{1,\alpha}$ up to the interface. As byproducts, we obtain a new ABP-Krylov-Tso estimate, and establish existence, uniqueness, a comparison principle, and regularity results for the flat interface problem. 
\thispagestyle{fancy}
\fancyhead{} 
\fancyfoot{}
\fancyfoot[L]{\footnoterule {\small  
\textit{2020 Mathematics Subject Classification.} Primary: 35B65, 35J60. Secondary: 35K10, 35D40. \\
The first author is partially supported by PRIN 2022 7HX33Z - CUP J53D23003610006. The second author is partially supported by NSF grant DMS-2247096.
}}

\section{Introduction}\label{Sec:Intro}

In this paper, we study the regularity of  continuous viscosity solutions to the following transmission problem for fully nonlinear parabolic equations:

\begin{equation} \label{eq:main1}
\begin{cases}
        \p_t u - F^{\pm}(D^2u)= f^{\pm} & \text{ in } \Omega^\pm, \\
        u_\nu^+-u_\nu^-=g & \text{ on } \Gamma,
\end{cases}
\end{equation}
where the domains $\Omega^\pm$ and the interface $\Gamma$ are defined as
\begin{align*}
\Omega^\pm &:=\{ (x,t) \in\C_1 : \pm (x_n-\psi(x',t))> 0\},\\
\Gamma &:= \{(x,t) \in \C_1 :  x_n=\psi(x',t)\},
    \end{align*}
for some given functions $\psi : \R^{n-1}\times \R \to \R$, $f^\pm: \Omega^\pm \to \R$, and $g : \Gamma \to \R$.  Here, $\nu$ is the spatial unit normal vector to $\Gamma$ pointing towards $\Omega^+$, and  $u_\nu^\pm$ are the normal derivatives of $u^\pm:=u|_{\overline {\Omega^\pm}}$. We denote the backwards unit cylinder by  $\C_1:=B_1\times (-1,0]$.  

Let $\mathcal{S}(n)$ be the set of $n\times n$ symmetric matrices. We further assume that $F^\pm:\mathcal{S}(n)\to\mathbb{R}$ are uniformly $(\lambda,\Lambda)$-elliptic and $F^\pm(0)=0$, i.e., there exist $0<\lambda\leq \Lambda$ such that
\[
	\lambda |N|\leq F^\pm(M+N)-F^\pm(M) \leq \Lambda|N|,
\]
for every $M,N\in\mathcal{S}(n)$, with $N\geq 0$, where $|N|$ denotes the norm of $N$.

Transmission problems model physical phenomena in which the behavior changes across some fixed interface, and have attracted considerable attention throughout the years, starting with the pioneering work of Picone \cite{picone1995} in elasticity in the 1950s and subsequent works \cite{Campanato1959,Lions,Schechter,Stampacchia}. For a comprehensive study of these problems, see \cite{Borsuk}. 
For other recent developments in the elliptic setting, see \cite{CSCS,Citti-Ferrari,DFS2018,Giovagnoli-Jesus2024,Kriventsov,Li-Vogelius,soria2023regularity} and references therein.

Transmission problems naturally arise in the analysis of free boundary problems, which have long been a central topic in the theory of partial differential equations. A foundational approach to such problems was developed by Caffarelli in his seminal works \cite{Caffarelli1987,Caffarelli1989}, where techniques based on monotonicity formulas and Harnack inequalities became standard tools in the field. In a breakthrough contribution, De Silva \cite{DeSilva2011} introduced a simpler and more versatile method to show that flat free boundaries are of class $\Coa$. Her approach, which allows to consider variable coefficients and a source term, is based on deriving a Harnack-type inequality that can be iterated to trap the free boundary between two arbitrarily close planes. In the two-phase setting, this iterative scheme leads, in the limit, to the study of a transmission problem across a fixed interface. 

A particularly important parabolic free boundary problem is the Stefan problem, which was studied by Athanasopoulos, Caffarelli, and Salsa in the seminal works \cite{ACS-Acta,ACS-Annals,ACS-CPAM}. In this series of papers, they prove that flat free boundaries of solutions to the homogeneous two-phase Stefan problem are $\Coa$. More recently, De Silva, Forcillo, and Savin \cite{DeSilva-Forcillo-Savin2021} developed a new approach, which follows more closely the elliptic strategy in \cite{DeSilva2011}, to prove a similar regularity result for the homogeneous one-phase Stefan problem;
see also \cite{Ferrari-Forcillo-Giovagnoli-Jesus2024,Ferrari-Giovagnoli-Jesus2025} for the inhomogeneous one-phase problem. Adopting this strategy to the two-phase Stefan problem would boil down to studying a parabolic fixed transmission problem, albeit with a transmission condition involving the time derivative of the solution. While these problems lie beyond the scope of the present work, the techniques developed here may offer useful insights for their analysis.

 The monograph \cite{LSU1968} provides a rigorous analytical framework and tools that are useful for studying parabolic equations in divergence form with discontinuous coefficients, which arise in transmission problems.
Regarding the viscosity formulation, very little has been done. Recently, the second author, together with Kriventsov, consider in \cite{Kriventsov-Soria-Carro-2024} a \textit{flat} parabolic transmission problem as a tool to study the regularity of the free boundary in a two-phase problem. 

The goal of this paper is to extend the results of the second author and Stinga  in \cite{soria2023regularity} to the parabolic setting. In addition to this extension, we also refine and improve upon several aspects of the original work. First, we remove the assumption of proximity between the operators $F^\pm$, i.e.,
\begin{equation*} 
\sup_{M\in \mathcal{S}(n) \setminus \{0\}} \frac{|F^+(M)-F^-(M)|}{|M|} \leq \theta \ll 1,
\end{equation*}
which was required in the case $g\approx 0$ (see \cite[Lemma~5.2]{soria2023regularity}). We show that the stability argument presented in \cite[Lemma~5.6]{soria2023regularity} for $g \approx 1$  remains valid when $g\approx 0$, thereby eliminating the need for the proximity condition.
 Second, we provide a  simplified proof of the stability argument by introducing a more direct and streamlined approach, avoiding the \textit{closedness} result (see \cite[Lemma~5.1]{soria2023regularity}). 

One of the main tools developed in this paper is a novel ABP-Krylov-Tso estimate (see Theorem~\ref{Thm:ABP}) for supersolutions to \eqref{eq:main1}. Its proof differs from the elliptic case mainly because of the different character of the parabolic convex envelope.  Furthermore, we use a new Hopf lemma for $C^{1,\alpha}$ boundaries, which might be of independent interest. We prove this lemma in the appendix for more general domains satisfying an interior $C^{1,\mathrm{Dini}}$ condition (see Theorem~\ref{thm:hopf}).

Another essential tool is the Harnack inequality (see Lemma~\ref{Lm:Harnack_new}), which presents additional challenges in our setting.
In the elliptic case, a standard strategy involves applying the interior Harnack inequality in a small ball contained on one side of the interface, and constructing a barrier that satisfies the transmission condition. By comparing this barrier with the solution, one can transfer the information from the initial ball across the interface. In the parabolic case, a similar approach can be employed. However, because of the waiting time in the parabolic Harnack inequality, the construction of a suitable barrier in this geometry becomes more difficult.

Our main results are the following; see Sections~\ref{sec:notation}-\ref{sec:viscosity} for the relevant notation. 

\begin{Theorem}[$\Cza$ regularity] \label{thm:intholder}
Let $f \in C(\C_1\setminus \Gamma) \cap L^{\infty}(\C_1)$, $g \in L^{\infty}(\Gamma)$, and $\psi \in C^{1,\alpha}(\overline{\C_1'})$, for some $0 < \alpha < 1$. 
Let $u$ satisfy
\[
\begin{cases}
u \in \mathcal{S}^*(f^{\pm}) & \text{in } \Omega^{\pm}, \\
u_{\nu}^+ - u_{\nu}^- = g & \text{on } \Gamma,
\end{cases}
\]
in the viscosity sense.
Then $u \in C^{0,\alpha_1}(\overline\C_{1/2})$, and
\[
\|u\|_{C^{0,\alpha_1}(\overline \C_{1/2})} \leq C \left( \|u\|_{L^{\infty}(\C_1)} + \|g\|_{L^{\infty}(\Gamma)} + \|f\|_{L^{n+1}(\C_1)} \right),
\]
where $0 < \alpha_1 < 1$ and $C > 0$ depend only on $n$, $\lambda$, $\Lambda$, $\alpha$, and $\|\psi\|_{C^{1,\alpha}(\overline{\C_1'})}$.
\end{Theorem}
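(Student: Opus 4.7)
The plan is to combine the transmission Harnack inequality (Lemma~\ref{Lm:Harnack_new}) with the classical interior parabolic Krylov--Safonov--Tso Hölder estimate for the Pucci class $\mathcal{S}^*(f^\pm)$, patched together on dyadic scales near $\Gamma$. After a common rescaling of $u$, $f$, and $g$, I can assume
\[
\|u\|_{L^\infty(\C_1)} + \|g\|_{L^\infty(\Gamma)} + \|f\|_{L^{n+1}(\C_1)} \leq 1,
\]
so the whole argument reduces to proving a quantitative $C^{0,\alpha_1}$ estimate.

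For a fixed point $(x_0,t_0) \in \Gamma \cap \overline{\C_{1/2}}$ and a small $r>0$, set $M = \sup_{\C_{2r}(x_0,t_0)} u$ and $m = \inf_{\C_{2r}(x_0,t_0)} u$. Applying Lemma~\ref{Lm:Harnack_new} to whichever of the nonnegative functions $u - m$ or $M - u$ is at least $(M-m)/2$ at some point in a lower subcylinder of $\C_r(x_0,t_0)$, I would obtain in the standard way an oscillation decay of the form
\[
\osc_{\C_r(x_0,t_0)} u \leq \mu \, \osc_{\C_{2r}(x_0,t_0)} u + C r^{\alpha_1},
\]
for some $\mu \in (0,1)$ and $\alpha_1 \in (0,1)$ depending only on $n$, $\lambda$, $\Lambda$, $\alpha$, and $\|\psi\|_{C^{1,\alpha}(\overline{\C_1'})}$. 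Iterating this inequality on dyadic scales yields the pointwise $C^{0,\alpha_1}$ modulus of continuity at $(x_0,t_0)$, uniformly in the base point.

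For $(x,t) \in \C_{1/2} \setminus \Gamma$, I would let $d := d_p((x,t),\Gamma)$ and exploit that $\C_{d/2}(x,t)$ lies entirely in $\Omega^+$ or in $\Omega^-$. The classical interior parabolic Krylov--Safonov--Tso Hölder estimate for $\mathcal{S}^*(f^\pm)$ then controls $[u]_{C^{0,\alpha_0}(\C_{d/4}(x,t))}$ by a constant scaling like $d^{-\alpha_0}$, for a universal $\alpha_0>0$. A standard interpolation-on-scales argument, matching the exponents by taking $\alpha_1 := \min\{\alpha_0, \alpha_1\}$ and absorbing the $d^{-\alpha_0}$ blow up against the $r^{\alpha_1}$ boundary decay from the previous paragraph, produces the global $C^{0,\alpha_1}(\overline{\C_{1/2}})$ bound asserted by the theorem.

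The main obstacle in this outline lies entirely in the Harnack inequality itself, not in the oscillation-decay iteration. As the authors emphasize, the waiting time intrinsic to the parabolic Krylov--Safonov Harnack inequality obstructs a direct transfer of positive values across $\Gamma$ via the elliptic-style barrier of the second author and Stinga, and the barrier must be re-engineered to accommodate both the waiting time and the $C^{1,\alpha}$ geometry of $\Gamma$. Once Lemma~\ref{Lm:Harnack_new} is granted, the remainder of the proof of Theorem~\ref{thm:intholder} is a routine application of the De Giorgi--Moser oscillation-decay template and presents no further technical difficulties.
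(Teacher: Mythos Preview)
Your proposal is correct and follows essentially the same route as the paper. The paper states explicitly that Theorem~\ref{thm:intholder} ``follows from a standard iteration'' of the oscillation decay (Theorem~\ref{lem:oscdecay}), and that theorem is in turn deduced from Lemma~\ref{Lm:Harnack_new} by precisely the normalize-and-apply-Harnack-to-$\pm\widetilde u$ argument you describe; the interior/boundary patching you spell out is what the word ``standard'' is covering.
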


 Let $0<\bar \alpha <1$, depending only on $n$, $\lambda$, and $\Lambda$, be the exponent from the $C^{1,\bar \alpha}$ regularity of viscosity solutions to the homogeneous flat interface problem given in Proposition~\ref{prop:regflat}.

\begin{Theorem}[$\Coa$ regularity] \label{thm:globalest}
Let $0<\alpha<\bar\alpha$. Assume that $\psi \in C^{1,\alpha}(\overline{\C_1'})$, $\psi\not\equiv 0$, $g\in C^{0,\alpha}(\Gamma)$, and $f^\pm$ satisfy that there are constants $C_{f^\pm}>0$ such that, for any $(x_0,t_0) \in \C_1$,
$$
\left( \fint_{\C_r(x_0,t_0)\cap\Omega^{\pm}} |f^{\pm}|^{n+1} \, dxdt \right)^\frac{1}{n+1} \leq C_{f^\pm} r^{\alpha-1}, \quad \text{for all } r > 0 \text{ small}.
$$
 If $u$ is a bounded viscosity solution to \eqref{eq:main1}, then $u^\pm \in C^{1,\alpha}(\overline \Omega_{1/2}^\pm)$, with
$$
\|u^\pm\|_{C^{1,\alpha}(\overline \Omega_{1/2}^\pm)} \leq C \|\psi\|_{C^{1,\alpha}(\overline{\C_1'})} \big(\|u\|_{L^\infty(\C_1)}+\|g\|_{C^{0,\alpha}(\Gamma)}+ C_{f^-}+C_{f^+} \big),
$$
where $C>0$ depends only on $n$, $\lambda$, $\Lambda$, and $\alpha$.

\end{Theorem}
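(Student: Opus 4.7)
The strategy is the classical improvement-of-flatness scheme at points of the interface $\Gamma$, with Proposition~\ref{prop:regflat} playing the role of the ``linearized'' model and Theorem~\ref{thm:intholder} supplying the compactness, complemented at the end by the interior $C^{1,\alpha}$ theory away from $\Gamma$.

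\textbf{Flattening and reduction.} Fix a boundary point $(x_0, t_0) \in \Gamma$. I would flatten $\Gamma$ near $(x_0, t_0)$ by the parabolic change of variables $(y, s) := (x' - x_0', \, x_n - \psi(x', t), \, t - t_0)$. Under this map, the two sides $\Omega^\pm$ become flat half-cylinders $\{\pm y_n > 0\}$; the equations become $\p_s v - \widetilde{F}^\pm(D^2 v, D v, y, s) = \tilde f^\pm$ for operators $\widetilde{F}^\pm$ that are still uniformly $(\lambda', \Lambda')$-elliptic and that coincide with $F^\pm(D^2 v)$ whenever $\nabla_{x'}\psi \equiv 0$; and the transmission condition transforms into $v_{y_n}^+ - v_{y_n}^- = \tilde g$ with $\tilde g$ controlled in $C^{0,\alpha}$ by $g$ and $\psi$. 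After subtracting the affine piece $\nabla_{x'}\psi(x_0', t_0)\cdot(x'-x_0')$ from $\psi$ and performing a parabolic rescaling $(x,t)\mapsto(\rho x,\rho^2 t)$ with a matching normalization of $v$, I may assume $\psi(0,0)=0$, $\nabla_{x'}\psi(0,0)=0$, and that $\|v\|_\infty$, $\|\tilde g\|_{C^{0,\alpha}}$, $\|\psi\|_{C^{1,\alpha}}$, and $C_{\tilde f^\pm}$ are all as small as desired.

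\textbf{Improvement-of-flatness lemma.} The heart of the proof is the following claim: given $\alpha < \bar\alpha$, there exist $\rho \in (0, 1/2)$, $\eta > 0$, and $C_0 > 0$, depending only on $n$, $\lambda$, $\Lambda$, and $\alpha$, such that whenever all of the smallness parameters above are bounded by $\eta$, there is an affine function $\ell(x) = a + b\cdot x$ with $|a| + |b| \leq C_0$ and
\[
\sup_{\C_\rho} |v - \ell| \leq \rho^{1+\alpha}.
\]
The proof is by compactness and contradiction. If the claim fails, I take sequences $v_k$, $\tilde g_k \to 0$ in $C^{0,\alpha}$, $C_{\tilde f_k^\pm} \to 0$, $\psi_k \to 0$ in $C^{1,\alpha}$, and $\widetilde F_k^\pm \to F^\pm$, with $\|v_k\|_\infty \leq 1$, violating the improvement for every affine $\ell$. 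By Theorem~\ref{thm:intholder}, $\{v_k\}$ is equicontinuous on $\overline{\C_{3/4}}$, so a subsequence converges uniformly to some $v_\infty$. Stability of viscosity solutions together with the passage of the transmission condition to the limit -- which is where the Hopf-type lemma (Theorem~\ref{thm:hopf}) and the parabolic ABP estimate (Theorem~\ref{Thm:ABP}) are decisive -- shows that $v_\infty$ is a bounded viscosity solution of the homogeneous flat-interface problem with operators $F^\pm$. Proposition~\ref{prop:regflat} then yields $v_\infty \in C^{1,\bar\alpha}$ up to the flat interface at $(0,0)$, with first-order Taylor polynomial $\ell_\infty$ satisfying $\sup_{\C_\rho} |v_\infty - \ell_\infty| \leq C\rho^{1+\bar\alpha}$. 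Choosing $\rho$ so small that $C\rho^{1+\bar\alpha} \leq \tfrac{1}{2}\rho^{1+\alpha}$ contradicts the failure of the improvement for $k$ large.

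\textbf{Iteration, interior estimates, and main obstacle.} With the one-step improvement in hand, I iterate it at $(0,0)$ in the usual way. The rescaled function $\tilde v_k(x, t) := [v - \ell_k](\rho^k x, \rho^{2k} t)/\rho^{k(1+\alpha)}$ satisfies the hypotheses of the lemma at each scale: the Morrey condition on $f^\pm$ is precisely tuned so that $C_{\tilde f^\pm}$ does not blow up, $g$ scales correctly in $C^{0,\alpha}$, and $\psi$ scales correctly in parabolic $C^{1,\alpha}$. This produces a sequence of affine functions $\ell_k$ with $|\ell_{k+1} - \ell_k|$ decaying geometrically like $\rho^{k\alpha}$, yielding the pointwise $C^{1,\alpha}$ expansion of $u^\pm$ at $(x_0, t_0)$ with the quantitative estimate claimed in the theorem. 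Since $(x_0, t_0) \in \Gamma$ was arbitrary, the same expansion holds at every boundary point; combining these with the standard interior $C^{1,\alpha}$ theory for fully nonlinear parabolic equations applied on one-sided parabolic cylinders contained in $\Omega^\pm$ produces the global $C^{1,\alpha}$ bound on $\overline{\Omega_{1/2}^\pm}$. The principal difficulty is the compactness step: since the flattening is only $C^{1,\alpha}$, the operators $\widetilde F_k^\pm$ have merely H\"older dependence on $(y, s)$, and one must argue that the viscosity-sense transmission condition -- which involves one-sided normal derivatives -- remains stable under uniform limits. Closing this compactness loop is exactly what the Hopf lemma and the new parabolic ABP-Krylov-Tso estimate are designed to enable.
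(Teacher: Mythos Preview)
Your overall architecture (pointwise improvement-of-flatness at interface points, then iterate, then glue with interior estimates) matches the paper's structure. However, the paper takes a genuinely different route at the key step, and your proposed route contains a real obstruction.

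\textbf{The flattening is problematic.} Under the change of variables $(y,s)=(x'-x_0',\,x_n-\psi(x',t),\,t-t_0)$, the time derivative transforms as $\partial_t u = \partial_s v - (\partial_t\psi)\,\partial_{y_n}v$, so the new equation involves $\partial_t\psi$. But the hypothesis is only $\psi\in C^{1,\alpha}$ in the \emph{parabolic} sense, which means $\psi$ is merely $C^{(1+\alpha)/2}$ in $t$; the time derivative $\partial_t\psi$ need not exist, let alone be bounded. So your flattened problem is not even well-posed as written. This is not cosmetic: it is one of the reasons the paper never flattens. Instead, the paper keeps the curved interface and compares $u$ directly to solutions $\overline v,\underline v$ of flat problems with interfaces at heights $x_n=\pm\delta$ (Lemma~\ref{Lm:Stab}), bounding $\|u-v\|_{L^\infty}$ via the ABP estimate and a carefully constructed auxiliary $w$ solving Dirichlet problems on the curved domains.

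\textbf{The improvement step is constructive, not compactness-based.} You propose a contradiction/compactness argument for the one-step improvement. The paper deliberately avoids this (it mentions bypassing the ``closedness'' lemma of the elliptic predecessor). Lemma~\ref{Lm:geom_it} is proved \emph{directly}: one builds the piecewise-affine approximations from $\nabla v^\pm(0)$ for the auxiliary flat solutions $\overline v,\underline v$, corrects them by an explicit small term so that $A^+-A^-=g(0)e_n$, and then bounds $\|u^\pm-l^\pm\|$ using the quantitative stability Lemma~\ref{Lm:Stab} and the $C^{1,\bar\alpha}$ regularity of $v^\pm$. A compactness argument could in principle be made to work without flattening (letting $\psi_k\to 0$ in $C^{1,\alpha}$ and passing the curved problems to the flat limit), but you would then need to prove stability of the transmission condition under domain convergence, which is nontrivial; this is essentially the ``closedness'' route the paper advertises avoiding.

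\textbf{Two further points.} First, in a transmission problem the one-step approximation must be by \emph{two} affine functions $l^\pm$ satisfying the compatibility $A^+-A^-=g(0)e_n$, not a single $\ell$; this constraint must be propagated through the iteration (the paper handles it explicitly in Lemma~\ref{Lm:geom_it} and in the inductive proof of Theorem~\ref{Thm:main_boundary_Coa_regularity}). Second, your description of the role of the Hopf lemma and ABP estimate is off: Hopf is used to build the barrier inside the ABP proof (Lemma~\ref{Lm:Barrier11}), and ABP is used throughout (Harnack, stability, maximum principle), not specifically to ``close a compactness loop.''
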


Theorem~\ref{thm:globalest} will be a consequence of the following pointwise $\Coa$ estimates at the interface. 

\begin{Theorem}[Boundary pointwise $\Coa$ estimates] \label{Thm:main_boundary_Coa_regularity}
Let $0<\alpha<\bar\alpha$.
Assume that $0 \in \Gamma$, $\psi \in C^{1,\alpha}(0)$, $\psi\not\equiv 0$, $g \in C^{0,\alpha}(0)$, and $f^{\pm}$ satisfy that there are constants $C_{f^\pm}>0$ such that
\[ 
\left( \fint_{\Omega^{\pm}_r} |f^{\pm}|^{n+1} \, dxdt \right)^\frac{1}{n+1} \leq C_{f^\pm} r^{\alpha-1}, \quad \text{for all } r > 0 \text{ small}. 
\]
Suppose that $u$ is a bounded viscosity solution to \eqref{eq:main1}, with $\|u\|_{L^\infty(\C_1)} \leq 1$.
Then $u^{\pm} \in C^{1,\alpha}(0)$, i.e., there exist affine functions $l^{\pm}(x) = A^{\pm} \cdot x + b$ such that
\[ |u^{\pm}(x,t) - l^{\pm}(x)| \leq C r^{1+\alpha}, \quad \text{for all } (x,t) \in \Omega_{r}^{\pm}, \]
for all $r\leq r_0$, with $r_0 = C_0/\|\psi\|_{C^{1,\alpha}(0)}$, and $C_0 >0$, depending only on $n$, $\lambda$, $\Lambda$, and $\alpha$. Moreover,
\[ |A^-| + |A^+| + |b| + |C| \leq C_0 \|\psi\|_{C^{1,\alpha}(0)} \big(|g(0)| + [g]_{C^{0,\alpha}(0)} + C_{f^-} + C_{f^+}\big). \]
\end{Theorem}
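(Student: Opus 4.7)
The plan is an iterative compactness scheme modeled on the elliptic argument of the second author and Stinga, in which at each scale the solution is compared with a solution of the homogeneous flat interface problem, where $C^{1,\bar\alpha}$ regularity is available via Proposition~\ref{prop:regflat}. Since $\alpha<\bar\alpha$, the gain $\bar\alpha-\alpha>0$ powers the geometric iteration.

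First I would normalize. By a rotation (flattening $\nabla_{x'}\psi(0,0)$), a parabolic rescaling $u(x,t)\mapsto \varepsilon^{-1}u(\rho x,\rho^2 t)$, and subtraction of an initial linear profile, I reduce to $\psi(0,0)=0$, $\nabla_{x'}\psi(0,0)=0$, $\|u\|_{L^\infty(\C_1)}\le 1$, and $\|\psi\|_{C^{1,\alpha}(0)}+|g(0)|+[g]_{C^{0,\alpha}(0)}+C_{f^+}+C_{f^-}\le\delta$ for a small $\delta$ to be chosen. The parabolic scaling preserves the $(\lambda,\Lambda)$-ellipticity of $F^\pm$ (by redefining $\tilde F^\pm(M)=\rho^{1-\alpha}F^\pm(\rho^{\alpha-1}M)$), the $L^{n+1}$-growth of $f^\pm$ with the same constant, and the $C^{0,\alpha}$ seminorm of $g$; the $C^{1,\alpha}$ seminorm of $\psi$ is even contracted by a factor $\rho^\alpha$. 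Next I prove the key one-step improvement lemma: there exist $\rho\in(0,1/2)$ and $\delta>0$, depending only on $n,\lambda,\Lambda,\alpha$, such that under this normalization one can find affine functions $l^\pm(x)=A'\cdot x'+a^\pm x_n+b$ with $a^+-a^-=g(0)$ and $|A'|+|a^\pm|+|b|\le C_0$ satisfying $\|u^\pm-l^\pm\|_{L^\infty(\Omega^\pm_\rho)}\le \rho^{1+\alpha}$. This is shown by contradiction: a sequence $(u_k,\psi_k,g_k,f_k^\pm,F_k^\pm)$ with vanishing data is, thanks to the global $C^{0,\alpha_1}$ estimate of Theorem~\ref{thm:intholder}, equicontinuous up to the interface; extracting subsequences, $u_k\to u_\infty$ uniformly, $F_k^\pm\to F_\infty^\pm$ locally uniformly, $\Gamma_k\to\{x_n=0\}$, and viscosity stability promotes $u_\infty$ to a solution of the homogeneous flat interface problem with $g\equiv 0$. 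Applying Proposition~\ref{prop:regflat} at $0$ produces an affine approximation with error $O(r^{1+\bar\alpha})$, and choosing $\rho$ so that $C\rho^{1+\bar\alpha}\le \tfrac12\rho^{1+\alpha}$ yields a contradiction for $k$ large.

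Once the one-step lemma is in hand, I iterate by rescaling $v_k(x,t)=\rho^{-k(1+\alpha)}\bigl(u(\rho^k x,\rho^{2k}t)-l_k(\rho^k x)\bigr)$ at each step: a direct check shows $v_k$ solves the transmission problem with rescaled data satisfying the smallness hypotheses of the one-step lemma (the key points being that subtracting $l_k$ absorbs the part of $g$ at $0$ so that the residual jump vanishes at $0$, and the rescaling contracts $[\psi]_{C^{1,\alpha}(0)}$). The increments $|A'_{k+1}-A'_k|+|a^\pm_{k+1}-a^\pm_k|+|b_{k+1}-b_k|$ are bounded by $C\rho^{k\alpha}$, so $l_k\to l^\pm$ for some affine limit, yielding the pointwise $C^{1,\alpha}$ expansion. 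Tracking the normalization constants through the reduction produces the quantitative bound on $|A^\pm|+|b|+|C|$ in terms of $|g(0)|,[g]_{C^{0,\alpha}(0)},C_{f^\pm}$ and $\|\psi\|_{C^{1,\alpha}(0)}$ (the factor $\|\psi\|_{C^{1,\alpha}(0)}$ entering via the initial flattening rotation).

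The main obstacle is the compactness step, and within it the passage to the limit in the transmission condition. The slopes $u_{\nu_k}^\pm$ on the moving interfaces $\Gamma_k$ have no a priori pointwise meaning, and what one actually passes to the limit is the viscosity formulation of the transmission condition through touching test functions; one has to argue that since $g_k\to 0$ uniformly (after the subtraction that kills $g(0)$) and $\Gamma_k$ converges to a hyperplane in $C^{1,\alpha}$, test functions for $u_\infty$ on $\{x_n=0\}$ can be perturbed into test functions for $u_k$ on $\Gamma_k$ with vanishing error in the jump. A secondary difficulty is the backward-in-time geometry: the $C^{0,\alpha_1}$ regularity of Theorem~\ref{thm:intholder}, which supplies the compactness on $\overline{\C_{1/2}}$, must be already strong enough up to $t=0$ and across the interface. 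This is why Theorem~\ref{thm:intholder} is proved first and in this global form.
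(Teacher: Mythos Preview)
Your one-step lemma via compactness is a genuinely different route from the paper's. The paper explicitly avoids the closedness argument you sketch (passing to the limit in the viscosity transmission condition) and instead proves a \emph{quantitative} stability result (Lemmas~\ref{lem:stabilityflat} and~\ref{Lm:Stab}): it builds two flat solutions $\overline v,\underline v$ on the hyperplanes $T_{\pm\delta}$ sandwiching $\Gamma$, glues them into a discontinuous comparison function $v=\underline v\chi_{\Omega^+}+\overline v\chi_{\overline{\Omega^-}}$, and then uses the ABP estimate (Theorem~\ref{Thm:ABP}) together with an auxiliary Dirichlet problem to show $\|u-v\|_{L^\infty(\C_{1/2})}\le C\varepsilon^\tau$. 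This buys an explicit $\delta$ with no extraction of subsequences, and sidesteps the closedness lemma you identify as the main obstacle. Your compactness scheme is in principle workable (it is essentially the strategy of the elliptic paper you cite), but you would need to supply the parabolic closedness statement, and this is precisely the step the present paper was designed to bypass.

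There is also a real gap in your iteration. You write that ``a direct check shows $v_k$ solves the transmission problem with rescaled data satisfying the smallness hypotheses,'' but this is false as stated: after subtracting the piecewise affine $l_k$ with $A_k^+-A_k^-=g(0)e_n$, the rescaled function $v_k$ has a jump across the curved interface $\widetilde\Gamma_k$ of size $\rho^{-k\alpha}|g(0)||x_n|$, since $\widetilde\Gamma_k\neq\{x_n=0\}$. (Your normalization $|g(0)|\le\delta$ does not come for free: dividing $u$ by a large constant makes $g(0)$ small but also makes $\|u\|_{L^\infty}$ small, so the one-step lemma becomes vacuous.) The paper handles this by keeping $|g(0)|\le 1$, accepting that the rescaled $v$ is discontinuous, and introducing an auxiliary continuous $w$ solving two Dirichlet problems with averaged data $\tfrac12(v^++v^-)$ on $\widetilde\Gamma_k$; one then shows $\|v-w\|_{L^\infty}$ and $|(w-v)_\nu^\pm|$ are controlled by $\delta_0$ via boundary $C^{1,\alpha}$ estimates, and applies the one-step lemma to $w$ rather than $v$. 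Without this correction your iteration does not close.
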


The paper is organized as follows. In the remainder of Section~\ref{Sec:Intro}, we introduce some useful notation and definitions. Section~\ref{Sec:ABP} focuses on the ABP--Krylov--Tso estimate, which relies on a barrier that we construct using the Hopf lemma. In Section~\ref{Sec:Holder}, we establish the H\"older regularity for the $\mathcal{S}^*(f)$ class, which follows from a key Harnack inequality. Section~\ref{Sec:flat} is devoted to the flat interface problem (i.e., $\psi\equiv 0)$, for which we provide a comprehensive study. By considering Jensen's approximate solutions, we obtain a comparison principle, uniqueness, and $C^{1,\alpha}$ regularity. We also establish existence of viscosity solutions via Perron's method. In Section~\ref{Sec:Stab}, we present stability results connecting the curved interface problem to the flat one, showing that small perturbations in the interface yield small perturbations in the corresponding solutions. Finally, in Section~\ref{Sec:Coa}, we prove the piecewise $C^{1,\alpha}$ regularity up to the interface of the original solution, by importing the regularity of the solution to the flat problem through a geometric iteration scheme.

\subsection{Notation} \label{sec:notation}

We define the notation that we will use throughout the paper. For $(x,t)\in \R^{n+1}$, we call $x\in \mathbb{R}^n$ the space variable, and $t \in \R$ the time variable. 
 If $x=0$ and $t=0$, we simply write $0$ instead of $(0,0)$.
A point $x\in \mathbb{R}^n$ will sometimes be written as $x=(x',x_n)$, where $x'\in \R^{n-1}$ and $x_n \in \R$.
For a function $u(x,t)$, we denote by $\nabla u$ the gradient of $u$ with respect to $x$, $D^2 u$ the Hessian with respect to $x$, and $\nabla' u$ the gradient with respect to $x'$.

We call a \textit{parabolic cylinder} any set of the form 
 \[
 \mathcal{P}=U \times (t_1,t_2],
 \]
 where $U$ is a smooth bounded domain in $\R^n$ and $t_1 < t_2$. In particular, for $r>0$ and $(x_0,t_0)\in \R^{n+1}$, we define $\C_r(x_0,t_0)=B_r(x_0)\times (t_0-r^2,t_0]$ and $\C_r=B_r(0)\times (-r^2,0]$.  
 We denote by $\p_p \mathcal{P}$ the \textit{parabolic boundary} of $\mathcal{P}$ composed of the bottom and the lateral parts, respectively, i.e.,
 $$\p_p {\mathcal{P}}=\p_B\mathcal{P}\cup \p_L\mathcal{P}= \left( U\times \{t_1\}  \right)  \cup   \left( \p U \times [t_1,t_2] \right).$$
 For a general domain $\Omega\subset \R^{n+1}$, we define the parabolic boundary $\p_p\Omega$ as the set of points in the boundary $(x,t)\in \p\Omega$ such that, for any $\varepsilon>0$, it holds $\C_\varepsilon(x,t)\cap \Omega^c\neq \emptyset$.

 The interface $\Gamma:=\C_1\cap\, \{x_n=\psi(x',t)\}$ divides the unit cylinder $\C_1$ into two disjoint domains that we denote by $\Omega^\pm:=\C_1 \cap\{\pm(x_n-\psi(x',t))> 0\}$. When the interface is flat, i.e., $\psi\equiv 0$, we write $\C_1^\pm:=\C_1 \cap\{\pm \,x_n> 0\}$ and $\C_1'=\C_1\cap \{x_n=0\}$. For $r>0$, we define the rescaled sets $$\Omega_r^\pm=\Omega^\pm\cap \C_r \quad \text{ and } \quad  \Gamma_r=\Gamma\cap \C_r.$$ 
Given a function $v$, we denote the restriction of $v$ to $\Omega^\pm$ as $v^\pm=v\big|_{\Omega^\pm}$.

\subsection{Parabolic spaces}
 
Given $(x,t), (y,s)\in \R^{n+1}$, we define the \textit{parabolic distance} as
\[
d_p((x,t), (y,s)) :=\big(|x-y|^2 + |t-s|\big)^{1/2}.
\]
Let $\mathcal{P}$ be a parabolic cylinder as above. Let $C(\mathcal{P})$ be the set of continuous functions in ${\mathcal{P}}$. For $0<\alpha \leq 1$ and $u\in C(\mathcal{P})$, we define the parabolic H\"older semi-norm as
\begin{equation*}
    [u]_{C^{0,\alpha}(\mathcal{P})} := \sup_{\substack{(x,t)\neq (y,s)}} \frac{\nor{u(x,t)-u(y,s)}}{d_p((x,t),(y,s))^{\alpha}},
\end{equation*}
and the parabolic H\"older norm as
\begin{equation*}
    \norm{u}_{C^{0,\alpha}(\mathcal{P})} := \norm{u}_{L^{\infty}(\mathcal{P})} +[u]_{C^{\alpha}({\mathcal{P}})}.
\end{equation*}
Further, we define the H\"older semi-norm in $t$ as
\begin{equation*}
[u]_{C_t^{\alpha}({\mathcal{P}})} := \sup_{ \substack{(x,t)\neq (x,s)}} \frac{|u(x,t)-u(x,s)|}{|t-s|^\alpha}.
\end{equation*}
This is necessary to introduce the $\Coa$-norm in ${\mathcal{P}}$:
\begin{equation*}
    \norm{u}_{\Coa({\mathcal{P}})}:=\norm{u}_{L^{\infty}({\mathcal{P}})} + \norm{\nabla u}_{L^{\infty}({\mathcal{P}})} +[\nabla u]_{C^{0,\alpha}({\mathcal{P}})} + [u]_{C_t^{(1+\alpha)/2}({\mathcal{P}})}.
\end{equation*}

We denote by $C^{0,\alpha}({\mathcal{P}})$ (resp.~$\Coa({\mathcal{P}})$) the space of continuous (resp.~differentiable in space) functions in $\mathcal{P}$ which are bounded in the $\norm{\cdot}_{C^{0,\alpha}({\mathcal{P}})}$ norm (resp.~$\norm{\cdot}_{\Coa(\mathcal{P})}$).

\subsection{Viscosity solutions and the Pucci class} \label{sec:viscosity}
Next, we define the notion of viscosity solution to \eqref{eq:main1}. 
 \begin{Definition}
Given $u, v \in C(\mathcal{P})$, we say that $v$ touches $u$ from above at a point $(x_0,t_0)\in \mathcal{P}$ if $u(x_0,t_0)=v(x_0,t_0)$, and there is some $\delta>0$ such that $u \leq v$ in $\C_\delta(x_0,t_0) \subseteq \mathcal{P}.$
 \end{Definition}

 We denote by $\usc(\mathcal{P})$ (resp.~$\lsc(\mathcal{P})$) the set of upper (resp.~lower) semi-continuous functions.

\begin{Definition} 
We say that $u \in \usc(\C_1)$ is a viscosity subsolution to \eqref{eq:main1} if whenever a continuous function $\varphi$ touches $u$ from above at $(x_0,t_0)$ the following holds, for some small $\delta>0$:
\begin{itemize}
    \item[$(i)$] If $(x_0,t_0) \in \Omega^{\pm}$ and $\varphi \in C^{2}(\C_\delta(x_0,t_0) )$, then 
    \[ 
    \p_t \varphi(x_0,t_0) - F^{\pm}(D^2 \varphi(x_0,t_0)) \leq f^{\pm}(x_0,t_0). 
    \]
    \item[$(ii)$] If $(x_0,t_0) \in \Gamma$ and  $\varphi \in C_x^1 \big( \C_\delta(x_0,t_0) \cap \overline{\C_1^+} \big) \cap  C_x^1 \big(\C_\delta(x_0,t_0) \cap \overline{\C_1^-}\big)$, then 
    \[
\varphi_\nu^+(x_0,t_0)-\varphi_\nu^-(x_0,t_0)\geq g(x_0,t_0).
    \]
\end{itemize}
 Similarly, a function $u\in \lsc(\C_1)$ is a viscosity supersolution to \eqref{eq:main1} if whenever $\varphi$ touches $u$ from below at $(x_0,t_0)\in \C_1$, then $\varphi$ satisfies $(i)$ and $(ii)$, where all inequalities are reversed. We say that $u\in C(\C_1)$ is a viscosity solution to \eqref{eq:main1} if it is both a viscosity subsolution and supersolution.
\end{Definition}

We also recall the definition of the extremal Pucci operators with ellipticity constants $\lambda$ and $\Lambda$:  
\begin{equation*}
\mathcal M_{\lambda,\Lambda}^+ (M) = \sup_{\lambda I \le A \le \Lambda I}  \tr (A M), \quad \quad  \quad \mathcal M_{\lambda,\Lambda}^- (M) = \inf_{\lambda I \le A \le \Lambda I}  \tr (A M).
\end{equation*} 
For simplicity, we will just call $\Mp$ and $\Mm$ when the ellipticity constants are understood. The class $\underline{\mathcal{S}}(f^\pm)$ denotes the functions $u \in \usc(\C_1)$ such that $\p_t u - \Mp (D^2 u) \leq f^\pm $ in $\Omega^{\pm}$. Furthermore, we say $u\in \overline{\mathcal{S}}(f^\pm)$ if $-u\in \underline{\mathcal{S}}(-f^\pm)$ and $S (f^\pm)=\underline{\mathcal{S}}(f^\pm)\cap \overline{\mathcal{S}}(f^\pm)$. Finally, $\mathcal{S}^{*}(f^\pm)=\underline{\mathcal{S}}(-|f^\pm|)\cap \overline{\mathcal{S}}(|f^\pm|)$.

\section{ABP-Krylov-Tso estimate}\label{Sec:ABP}

A key tool to study the regularity of viscosity solutions to \eqref{eq:main1} is the ABP-Krylov-Tso estimate.
Given a continuous function $u$ in $\C_1$, we define its the convex envelope as
\begin{equation}\label{Def:convex_env}
    \begin{aligned}
        C_u(x,t):=&\,\sup\{ v(x,t): v \leq u \text{ 
 in }\C_1,\ v(y,s) \text{ convex in } y \text{ and decreasing in }s\} \\
=&\, \sup \{  A \cdot x + b : A\cdot y + b \leq u(y,s) \ \text{ for all }  s \leq t \text{ and } y \in B_1\},
    \end{aligned}
\end{equation}
(e.g., see \cite[Definition~3.13]{Wang1992} and \cite[Lemma~4.2]{imbert2013introduction}). We denote the positive and negative parts of a function $v$ as  $v_{+}= \max \{0,v \}$ and $v_- = -\min\{0,v\}$, respectively. 
In the following, $C_u$ denotes the convex envelope of $-u_-$ in $\C_2$, where we have extended $u$ by zero outside $\C_1$. 

\begin{Theorem}[ABP-Krylov-Tso estimate]\label{Thm:ABP}
Let $f \in C(\C_1\setminus \Gamma) \cap L^{\infty}(\C_1)$, $g\in L^\infty(\Gamma)$, and $\psi \in C^{1,\alpha}(\overline{\C_1'})$, for some $0<\alpha<1$. Let $u$ satisfy 
    \begin{align*}
        \begin{cases}
            u\in \overline{\mathcal{S}}(f^\pm) & \text{ in } \Omega^\pm,\\
        u_\nu^+-u_\nu^- \leq g &\text{ on } \Gamma,
        \end{cases}
    \end{align*}
    in the viscosity sense.
 Then 
 \[ 
 \sup_{\C_1} u_- \leq \sup_{\p_p \C_1} u_- + C \Big(\max_\Gamma g_+ + \| f_+\|_{L^{n+1}(\C_1)}\Big),
  \]
where $C>0$ depends only on $n$, $\lambda$, $\Lambda$, $\alpha$, and $\|\psi\|_{C^{1,\alpha}(\overline{\C_1'})}$.
\end{Theorem}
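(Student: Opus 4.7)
The plan is to follow the broad outline of the classical parabolic Krylov--Tso (ABP) argument based on the parabolic convex envelope $C_u$ defined in \eqref{Def:convex_env}, while handling the interface by first subtracting a carefully constructed barrier that absorbs the transmission contribution into the $\max_\Gamma g_+$ term on the right-hand side. Using the new Hopf lemma (Theorem~\ref{thm:hopf}) for $C^{1,\alpha}$ parabolic boundaries, I would first construct a barrier $h \in C(\C_1)$ with the following properties: $0 \le h \le C_0$ in $\C_1$; $h \in \underline{\mathcal{S}}(0)$ in each of $\Omega^{\pm}$ (so $h$ is a viscosity subsolution of $\p_t h - \Mm(D^2 h) \le 0$); and the one-sided jump $h_\nu^+ - h_\nu^- \ge 1$ on $\Gamma$ in the viscosity sense, with $C_0$ depending only on $n,\lambda,\Lambda,\alpha,\|\psi\|_{C^{1,\alpha}(\overline{\C_1'})}$. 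In the flat case $\psi\equiv 0$ one can take $h(x,t) = |x_n|$; for a curved $C^{1,\alpha}$ interface the naive distance-to-$\Gamma$ ansatz fails the Pucci test because $\nabla'\psi$ is only Hölder continuous, so instead one solves auxiliary Dirichlet problems in each $\Omega^{\pm}$ with zero data on $\Gamma$, the Hopf lemma providing the quantitative nondegeneracy of the normal derivative at the interface.

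I would then set $v := u - (\max_\Gamma g_+)\,h$. Using the standard inequality $\Mm(A-B) \le \Mm(A) - \Mm(B)$ (a consequence of $\Mm(A+C) \le \Mm(A) + \Mp(C)$ with $C = -B$) together with the subsolution bound $\p_t h \le \Mm(D^2 h)$, a direct verification shows $v \in \overline{\mathcal{S}}(f^{\pm})$ in $\Omega^{\pm}$ and, in the viscosity sense, $v_\nu^+ - v_\nu^- \le g - \max_\Gamma g_+ \le 0$ on $\Gamma$. Since $h \ge 0$ gives $v \le u$ and $\sup_{\p_p \C_1} v_- \le \sup_{\p_p \C_1} u_- + C_0 \max_\Gamma g_+$, it suffices to prove the pure parabolic ABP bound
\[
\sup_{\C_1} v_- \le \sup_{\p_p \C_1} v_- + C\,\|f_+\|_{L^{n+1}(\C_1)}
\]
for supersolutions with nonpositive transmission. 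For this I would analyze the contact set $\{v = C_v\}\cap\{v<0\}$, where $C_v$ is the parabolic convex envelope of $-v_-$ extended by zero. Interior contact in $\Omega^{\pm}$ is handled as in the classical argument: any affine tangent from below serves as a test function, the supersolution inequality bounds its time-slope by $f_+$, and one integrates the resulting Jacobian bound $\le C |f_+|^{n+1}$ of the parabolic subdifferential map. To rule out contact on $\Gamma$, I would first pass to the strict regime by substituting $v \leftarrow v - \epsilon h$, which makes the transmission jump $\le -\epsilon<0$: any affine $L$ touching from below at $(x_0,t_0)\in\Gamma$ satisfies $L_\nu^+ = L_\nu^-$, forcing $0 \le -\epsilon$, a contradiction. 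Hence all contact lies in $\Omega^{\pm}$, the classical bound applies to the perturbed function, and sending $\epsilon\to 0$ yields the estimate for $v$; combining with the two reduction inequalities gives the stated bound on $u$.

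The main obstacle I anticipate is the barrier construction itself. Producing a genuine viscosity subsolution of the Pucci operator with a quantitative positive transmission jump across a merely $C^{1,\alpha}$ interface is delicate, because the obvious explicit candidates (signed distance functions, piecewise affine cutoffs) lose the Hessian control needed to pass the Pucci test when $\nabla'\psi$ is only Hölder continuous. This is precisely where the new $C^{1,\alpha}$ parabolic Hopf lemma enters: it furnishes the required boundary-derivative nondegeneracy qualitatively, bypassing the explicit curvature estimates for $\Gamma$ that would otherwise be unavailable at this regularity.
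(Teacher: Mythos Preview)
Your proposal is correct and follows essentially the same route as the paper: construct a barrier by solving Dirichlet problems for $\p_t w - \Mm(D^2w)=0$ on each side with zero data on $\Gamma$, invoke the $C^{1,\alpha}$ parabolic Hopf lemma to get a quantitative positive normal-jump, subtract a multiple of this barrier (with an extra $\varepsilon$) so the transmission condition becomes strictly negative, and then run the classical Krylov--Tso convex-envelope argument, noting that contact points cannot lie on $\Gamma$ since an affine tangent has zero normal-jump. The only slip is notational---$\underline{\mathcal{S}}(0)$ in this paper means $\p_t h - \Mp(D^2h)\le 0$, not the $\Mm$ inequality you wrote in parentheses---but your parenthetical condition is in fact the one you need (and the one the paper's barrier satisfies), so the argument goes through.
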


 As an immediate consequence, we get the maximum principle.

\begin{Corollary}[Maximum principle] \label{cor:MP}
    Let $u$ satisfy 
    \begin{align*}
        \begin{cases}
            u\in \overline{\mathcal{S}}(0) & \text{ in } \Omega^\pm,\\
        u_\nu^+-u_\nu^- \leq 0 &\text{ on } \Gamma,
        \end{cases}
    \end{align*}
    in the viscosity sense. If $u \geq 0$ on $\partial_p \C_1$, then $u\geq 0$ in $\C_1$.
\end{Corollary}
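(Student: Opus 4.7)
The plan is to derive the maximum principle as a direct application of the ABP-Krylov-Tso estimate (Theorem~\ref{Thm:ABP}). Since the hypotheses of the corollary are exactly the hypotheses of Theorem~\ref{Thm:ABP} with the special choice $f^\pm \equiv 0$ and $g \equiv 0$, I just need to plug in and unpack the definitions.

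First, I would verify that the hypotheses of Theorem~\ref{Thm:ABP} are satisfied: take $f \equiv 0$ (which trivially lies in $C(\C_1 \setminus \Gamma) \cap L^\infty(\C_1)$) and $g \equiv 0$ (which trivially lies in $L^\infty(\Gamma)$). The regularity assumption on $\psi$ that is needed to invoke Theorem~\ref{Thm:ABP} is inherited from the standing assumptions of the section, so the estimate applies to $u$ and yields
\[
\sup_{\C_1} u_- \;\leq\; \sup_{\p_p \C_1} u_- + C\Big(\max_\Gamma 0 + \|0\|_{L^{n+1}(\C_1)}\Big) \;=\; \sup_{\p_p \C_1} u_-.
\]

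Next, I use the boundary hypothesis $u \geq 0$ on $\p_p \C_1$, which by definition of the negative part $u_- = -\min\{0,u\}$ means $u_- \equiv 0$ on $\p_p \C_1$. Substituting into the above inequality gives $\sup_{\C_1} u_- \leq 0$, and since $u_- \geq 0$ by definition, this forces $u_- \equiv 0$ in $\C_1$, which is precisely the assertion $u \geq 0$ in $\C_1$. No serious obstacle is expected here: all of the work has already been absorbed into Theorem~\ref{Thm:ABP}, and the corollary is essentially a rewriting step.
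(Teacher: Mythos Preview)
Your proof is correct and is exactly the approach the paper takes: the corollary is stated as an ``immediate consequence'' of Theorem~\ref{Thm:ABP}, and your argument spells out precisely this specialization to $f\equiv 0$, $g\equiv 0$.
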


To prove Theorem~\ref{Thm:ABP}, we introduce a useful barrier.

\begin{Lemma}[Barrier]\label{Lm:Barrier11}
    Let $\Omega\subset \R^{n+1}$ be a $\Coa$ domain, for some $0<\alpha<1$, such that $0\in \p_p\Omega$. Call $\Gamma=\p_p \Omega\cap \C_2$. Let $w$ be the viscosity solution to
    \begin{align}\label{eq:barrier11}
        \begin{cases}
            \p_t w-\Mm(D^2w)=0 & \text{ in } \Omega\cap \C_2,\\
            w=\phi & \text{ on } \Gamma,\\
            w=1 & \text{ on } \p_p(\Omega\cap \C_2)\setminus \Gamma,
        \end{cases}
    \end{align}
    where $\phi\in C^\infty(\Gamma)$ satisfies $0\leq \phi\leq 1$, $\phi\equiv 0$ on $\Gamma\cap \C_1$, and $\phi\equiv 1$ on $\Gamma\setminus\C_{3/2}$.

    Then $w$ is a classical solution to \eqref{eq:barrier11} in $\Omega\cap \C_1$, with $0\leq w\leq 1$, and
    \begin{align}\label{eq:barrier_hopf}
        w_\nu\geq c_0>0 \quad \text{ on } \Gamma\cap \C_1,
    \end{align}
    where $\nu$ is the (spatial) interior unit vector of $\Gamma$ and $c_0>0$ depends on $n$, $\lambda$, $\Lambda$, $\alpha$ and $[\Gamma]_{\Coa}$.
    
\end{Lemma}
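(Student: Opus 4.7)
The plan is to build $w$ via Perron's method, promote it to a classical solution through interior regularity, establish a quantitative interior lower bound, and then extract \eqref{eq:barrier_hopf} from the boundary Hopf lemma of Theorem~\ref{thm:hopf}. The boundary datum obtained by gluing $\phi$ on $\Gamma$ to the constant $1$ on the remainder of $\p_p(\Omega\cap\C_2)$ is continuous, since $\phi\equiv 1$ on $\Gamma\setminus\C_{3/2}$ rules out any jump at the join. The operator $\p_t-\Mm$ is uniformly parabolic and concave, and $\Omega\cap\C_2$ has a $\Coa$ parabolic boundary, so exterior-cylinder barriers are available at every boundary point. Perron's method together with the comparison principle for fully nonlinear parabolic operators therefore yields a unique continuous viscosity solution $w$ of \eqref{eq:barrier11}, and comparison against the constants $0$ and $1$, which solve the equation, delivers $0\le w\le 1$.

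Next, since $\Mm$ is concave and uniformly elliptic, the parabolic Evans--Krylov theorem gives $w\in \Cta_{\mathrm{loc}}(\Omega\cap\C_2)$, so $w$ solves \eqref{eq:barrier11} classically throughout $\Omega\cap\C_1$. To feed into the Hopf argument, I need a quantitative interior lower bound that is uniform in the base point on $\Gamma\cap\C_1$. Because $w=1$ on $\p_p(\Omega\cap\C_2)\setminus\Gamma$ and $\Gamma$ has $\Coa$-norm controlled by hypothesis, for each $(x_0,t_0)\in\Gamma\cap\C_1$ I can select a reference point $(y_0,s_0)\in\Omega\cap\C_2$ with $s_0\le t_0$ sitting at parabolic distance $\gtrsim 1$ from $\Gamma\cap\C_{3/2}$. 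Chaining a finite number of parabolic cylinders from $\p_p(\Omega\cap\C_2)\setminus\Gamma$ back to $(y_0,s_0)$ and applying the Krylov--Safonov weak Harnack inequality along the chain yields $w(y_0,s_0)\ge \delta_0$ for some constant $\delta_0=\delta_0(n,\lambda,\Lambda,\alpha,[\Gamma]_{\Coa})>0$; sliding the reference point shows $w\ge\delta_0$ on a full interior region separated from $\Gamma\cap\C_1$.

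Finally, $\Gamma$ being $\Coa$ in the parabolic sense implies the interior $C^{1,\dini}$ condition of Theorem~\ref{thm:hopf} is satisfied at every $(x_0,t_0)\in\Gamma\cap\C_1$, with Dini modulus controlled uniformly by $[\Gamma]_{\Coa}$. Since $\phi\equiv 0$ on $\Gamma\cap\C_1$, the function $w$ attains its minimum $0$ at $(x_0,t_0)$ while being bounded below by $\delta_0$ on a fixed interior region; applying Theorem~\ref{thm:hopf} then produces $w_\nu(x_0,t_0)\ge c_0$ for a constant $c_0$ depending only on $n$, $\lambda$, $\Lambda$, $\alpha$, and $[\Gamma]_{\Coa}$, which is \eqref{eq:barrier_hopf}. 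The main obstacle is the quantitative interior lower bound: one must propagate the information ``$w=1$'' from the far part of $\p_p(\Omega\cap\C_2)$ into a controlled neighborhood of every base point in $\Gamma\cap\C_1$ in a way that respects parabolic causality and remains uniform across the curved interface; once this reference bound is secured, Theorem~\ref{thm:hopf} delivers the Hopf estimate directly.
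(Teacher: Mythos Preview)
Your outline follows the paper's argument closely: existence and uniqueness via Perron/comparison, interior $C^{2,\alpha}$ regularity (Evans--Krylov for the concave operator $\Mm$), $0\le w\le 1$ by the maximum principle, and then Theorem~\ref{thm:hopf} applied at each point of $\Gamma\cap\C_1$. The only substantive difference is how you secure the quantitative interior lower bound that feeds into the Hopf lemma.

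The paper does this step by a direct barrier comparison rather than Harnack chains. After translating, rotating, and rescaling so that the chosen boundary point is at the origin, $\nu(0)=e_n$, and $[\psi]_{C^{1,\alpha}(0)}\le 1/4$ (hence $|\psi|\le 1/4$), it sets $\widetilde w=w-\tfrac12 x_n+\tfrac18$ on $D=\{x_n>1/4\}\cap\Omega\cap\C_2$. Since $\Gamma\cap D=\emptyset$, one has $\widetilde w\in\overline{\mathcal S}(0)$ in $D$; on $\{x_n=1/4\}$ we get $\widetilde w\ge 0-1/8+1/8=0$, and on the rest of $\p_p D$ (where $w=1$) we get $\widetilde w\ge 1-1+1/8>0$. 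The maximum principle then gives $\widetilde w\ge 0$ in $D$, i.e.\ $w(e_n/2,-3/4)\ge 1/8$, which is exactly the input Theorem~\ref{thm:hopf} requires.

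Your proposed Harnack chain runs into the usual issue that the parabolic Harnack inequality compares \emph{interior} values only; it does not let you start from the boundary datum $w=1$ on $\p_p(\Omega\cap\C_2)\setminus\Gamma$. To seed the chain you would still need a barrier near the bottom or lateral boundary to produce a quantitative interior lower bound first---at which point the paper's one-line linear barrier is already the simplest such device and makes the subsequent chaining unnecessary. So your strategy is correct in spirit, but the step you yourself flag as ``the main obstacle'' is precisely where the paper's argument is sharper and fully explicit.
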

\begin{proof}
    The existence, uniqueness, and interior $\Cta$ regularity of solutions to \eqref{eq:barrier11} follow from the results in \cite{Wang1992,Wang1992II}. Furthermore, since $\phi$ is smooth on $\Gamma$, we also have $\Coa$ regularity up to $\Gamma\cap \C_1$. By the classical maximum principle, $0\leq w\leq 1$. Hence, we only need to prove \eqref{eq:barrier_hopf}.

    Let $(x_0,t_0)\in \Gamma\cap \C_1$. After a translation, rotation, and rescaling, we can assume without loss of generality that $x_0=0$, $t_0=0$, and
    \[
    \Omega\cap \C_1= \{(x,t)\in \C_1 \,:\,x_n>\psi(x',t)\},
    \]
    for some $\psi\in \Coa(\overline{\C'_1})$, with $\nabla'\psi(0)=0$, and $[\psi]_{C^{1,\alpha}(0)}\leq 1/4$. Then $\Omega$ satisfies the interior $C^{1,\dini}$ condition with $\omega_\Omega(r)=\tfrac{1}{4} r^{\alpha}$ and $\bar r=1$ (see Definition~\ref{def:intdini}). 
    Moreover, $w$ satisfies the assumptions of the Hopf Lemma (Theorem~\ref{thm:hopf}) in $\Omega\cap\C_1$. Hence, by setting $l=\nu(0)=e_n$, we get 
    \[
    w(re_n,0)\geq c w(e_n/2,-3/4)\,r\quad \text{ for all } 0<r<r_0.
    \]
    Since $w$ is differentiable at the origin, letting $r\to 0$, we get $w_\nu(0,0)\geq c w(e_n/2,-3/4)$. To bound $w(e_n/2,-3/4)$ universally from below, we define 
    \[
    \widetilde w(x,t)=w(x,t)-\tfrac12x_n+\tfrac18 \quad \text{ in } D:=\{(x,t)\in \Omega\cap \C_2\, :\, x_n> 1/4\}.
    \]
    Since $|\psi|\leq 1/4$, then $\Gamma\cap D=\emptyset$. Hence, $\widetilde w\in \overline {\mathcal{S}}(0)$ in $D$. We claim that $\widetilde w\geq 0$ on $\p_pD$. Indeed, note that $\p_pD= (\C_2\cap \{x_n=1/4\}) \cup (\partial_p(\Omega\cap \C_2)\cap \{x_n>1/4\}).$ On $\C_2\cap \{x_n=1/4\}$, we have $w\geq 0$, thus $\widetilde w \geq - 1/8+1/8=0$. On $\partial_p(\Omega\cap \C_2)\cap \{x_n>1/4\}$, we have $w=1$, thus $\widetilde w \geq 1 -1+1/8=1/8$.
    By the maximum principle, it follows that $\widetilde w\geq 0$ in $D$, which implies $w(e_n/2,-3/4)\geq 1/8$. 
    
    Therefore $w_\nu(0,0)\geq c_0$, with $c_0=c/8>0$.
\end{proof}

We are ready to give the proof of the ABP-Krylov-Tso estimate. 
\begin{customproof}{Theorem~\ref{Thm:ABP}}
Let $\Gamma_2 = \C_2\cap \{x_n=\psi(x',t)\}$ and $\Omega_2^\pm = \C_2\cap \{\pm(x_n-\psi(x',t))>0\}$. After a translation, we may assume that $\psi(0)=0$, so that $0\in \partial_p \Omega_2^\pm$. Let $w^\pm$ be the  barriers constructed in Lemma~\ref{Lm:Barrier11} in the domains $\Omega_2^\pm$.
Note that $w^+=w^-$ on $\Gamma_2$, thus the function $w=w^+\chi_{\overline\Omega_2^+}+w^-\chi_{\Omega_2^-}$, defined in $\C_2$, is continuous across $\Gamma_2$.
We have $0\leq w\leq 1$ in $\Omega^\pm$, and 
\[
w_\nu^+\geq c^+>0,\quad w_\nu^-\leq -c^-<0 \quad \text{ on } \Gamma,
\]
where $c^\pm>0$ depend only on $n$, $\lambda$, $\Lambda$, $\alpha$, and $[\psi]_\Coa$.

Fix $\varepsilon>0$ small, and consider
\[
v=u-\frac{1}{c_0}\left(\max_\Gamma g_+ + \varepsilon\right)w \quad \text{ in } \C_1,´
\]
with $c_0=c^++c^-$. Then $v\in \overline{\mathcal{S}}(f^\pm)$ in $\Omega^\pm$.
Moreover, $v$ satisfies in the viscosity sense, 
\begin{align}\label{eq:tcneg}
    v_\nu^+-v_\nu^-\leq&\, g-\frac{1}{c_0}\left(\max_\Gamma g_++\varepsilon\right)(w_\nu^+-w_\nu^-)
    \leq  -\varepsilon \quad \text{ on } \Gamma.
\end{align}

Up to considering $v-\inf_{\p_p \C_1}v$, we can suppose that $v\geq 0$ on $\p_p \C_1$. Assume also that $v_-\not\equiv0$, otherwise the result is trivial. Let $C_v$ be the convex envelope of $-v_-$ in $\C_2$, where we have extended $v$ by zero outside $\C_1$.
Let $t_0$ be the last time where $v_-$ vanishes for all times up to $t_0$, i.e.,
\[
t_0 :=\sup\Big\{t\in[-2,0] : \sup_{B_1\times[-2,t)}v_-=0\Big\}.
\]
Since $v_-\not\equiv 0$, then $t_0\in(-1,0)$. Hence, $C_v(x,t)<0$ for every $(x,t)\in B_2\times(t_0,0]$, and we get
\[
(\p B_1\times(t_0,0] )\cap \{v=C_v\}=\emptyset.
\]
We claim that $\{(x,s)\in \Gamma\,:\, s\geq t\}\cap\{v=C_v\}=\emptyset$ for all $t>t_0$. Otherwise, if $(\bar x,\bar t)\in \Gamma\cap\{v=C_v\}$ with $\bar t >t_0$, then there exists a supporting plane $A\cdot x+b$ which touches the graph of $v$ from below at $(\bar x, \bar t)$, for some $A\in \R^n$ and $b\in \R$. Hence, by the viscosity condition on $\Gamma$ in \eqref{eq:tcneg}, we get
\[
-\varepsilon\geq A\cdot \nu(\bar x, \bar t)-A\cdot \nu(\bar x, \bar t)=0,
\]
which is a contradiction.
Moreover, since $\{v=C_v\}$ and $\{(x,s)\in \Gamma\,:\, s\geq t\}$ are 
disjoint closed sets for all $t>t_0$, the parabolic distance between them is strictly positive.

Next, we show that $C_v\in C^{1,1}(\overline B_1\times[t_1,0])$, for all $t_1>t_0$, by showing that there exist $K>0$ and $0<r\leq 1$ such that for all $(\bar x, \bar t)\in \overline B_1\times [t_1,0]\cap \{v=C_v\}$, there exists a convex parabolic paraboloid of opening $K$ that touches $C_v$ by above at $(\bar x,\bar t)$ in $\C_r(\bar x,\bar t)$. Indeed, let $t_1>t_0$ and take $(\bar x, \bar t)\in ( \overline B_1\times [t_1,0])\cap \{v=C_v\}$. Since $(\bar x, \bar t)\not\in \p_p \C_1\cup \Gamma$, we may assume that $\C_\delta(\bar x,\bar t)\subset \Omega^+$, for $\delta$ sufficiently small.
Let $\ell(x)$ be a supporting plane of $C_v$ at $(\bar x, \bar t)$. Then 
\[
0\leq C_v-\ell\leq -v_--\ell\quad \text{ in } \C_\delta(\bar x, \bar t),
\]
with equality at $(\bar x,\bar t)$. Furthermore, $-v_--\ell\in \overline {\mathcal{S}}(f^+)$ in $\C_\delta(\bar x, \bar t)$. Hence, by \cite[Lemma 3.16]{Wang1992}, 
\begin{align}\label{eq:convex_env}
    C_v(x,t)\leq \ell(x)+C^+\Big(\sup_{\C_\delta(\bar x, \bar t)}f^+_+\Big)\left(|x-\bar x|^2-(t-\bar t)\right)\text{ in } \C_{\delta\gamma^+}(\bar x, \bar t),
\end{align}
where $\gamma^+<1$ and $C^+$ are universal constants. 
If $(\bar x, \bar t)\in \Omega^-\cap \{v=C_v\}$, the proof is identical. Hence, we take $K=2\max\{C^+,C^-\}\|f_+\|_\infty$ and $r=\delta \min\{\gamma^+,\gamma^-\}$.

Therefore, there exists a set $E\subseteq \overline B_1\times [t_1, 0]$ such that $|(\overline B_1\times [t_1, 0])\setminus E|=0$, and $C_v$ is twice differentiable w.r.t $x$ and once w.r.t. $t$, for all $(x,t)\in E$. Moreover, we have
\begin{align*}
    \sup_{B_1\times [t_1, 0]} v_-\leq C\Big( \int_{E\cap \{v=C_v\}} (-\p_t C_v) \det D^2 C_v\, dxdt   \Big)^{\frac{1}{n+1}}.
\end{align*}
Letting $\delta \to 0$ in \eqref{eq:convex_env}, and exploiting the continuity of $f_+$, we obtain
\[
(-\p_t C_v) \det D^2 C_v\leq Cf_+^{n+1} \quad \text{ in } E\cap \{v=C_v\}.
\]
Combining both estimates, for any $t_1>t_0$, it follows that
$$
    \sup_{B_1\times [t_1, 0]} v_-\leq C\|f_+\|_{L^{n+1}(\C_1)}.
$$
Since $v_-\equiv 0$ on $B_1\times [-1, t_0]$, and $C$ is independent of $t_1$, letting $t_1 \to t_0$, the estimate holds on $\C_1$. 

Finally, letting $\vep\to0$, we obtain the desired result for $u$.
\end{customproof}

\begin{Remark}\label{Rmk:ABP_flat}
    As in the elliptic case (see \cite[Remark 2.4]{soria2023regularity}), when the interface is flat, i.e., $\Gamma=\C_1'$, we obtain the improved estimate
    \[
    \sup_{\C_1}u_-\leq \sup_{\p_p\C_1}u_-+C\left(\max_{\Gamma}g_++\|f_+\|_{L^{n+1}(\C_1\cap \{u=C_u\})}\right).
    \]
\end{Remark}

\section{H\"older regularity}\label{Sec:Holder}

In this section, we prove interior and global H\"older estimates for functions in the Pucci class satisfying the transmission condition in the viscosity sense.
The interior H\"older estimate (Theorem~\ref{thm:intholder}) follows from a standard iteration of the next oscillation decay.

\begin{Theorem}[Oscillation decay] \label{lem:oscdecay}
Let $u$ be as in Theorem~\ref{thm:intholder}. There is a dimensional constant $r_0<1/4$ such that
\[
\osc_{\C_{r_0}}u\leq \mu \displaystyle\osc_{\C_1}u+C\left(\|g\|_{L^\infty(\Gamma)} + \|f\|_{L^{n+1}(\C_1)}\right),
\]
where $0<\mu<1$ and $C>0$ depend only on $n$, $\lambda$, $\Lambda$, $\alpha$, and $\|\psi\|_{C^{1,\alpha}(\overline{\C_1'})}$.
\end{Theorem}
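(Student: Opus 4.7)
My plan is to derive the oscillation decay from a weak Harnack-type inequality for nonnegative supersolutions of the transmission problem, following the Krylov--Safonov scheme adapted to the transmission setting through the ABP estimate of Theorem~\ref{Thm:ABP}. After rescaling, normalize so that $\osc_{\C_1}u = 1$, and after subtracting a constant (which preserves both the class $\mathcal{S}^*(f^\pm)$ and the transmission condition, since constants belong to both Pucci classes and have vanishing normal derivatives), so that $-\tfrac12 \leq u \leq \tfrac12$ in $\C_1$. The goal reduces to producing a universal $c>0$ with $\osc_{\C_{r_0}}u \leq 1-c+C(\|g\|_{L^\infty(\Gamma)}+\|f\|_{L^{n+1}(\C_1)})$.

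Consider the two nonnegative functions $v_1 := u+\tfrac12$ and $v_2 := \tfrac12-u$. Both belong to $\mathcal{S}^*(f^\pm)$ in $\Omega^\pm$, since $\mathcal{S}^*$ is symmetric under $u\mapsto -u$ via the identity $\mathcal{M}^+(-M)=-\mathcal{M}^-(M)$; and both satisfy transmission conditions with data $g$ and $-g$, whose absolute values are bounded by $\|g\|_{L^\infty(\Gamma)}$. Since $v_1+v_2\equiv 1$, at every point of the past slab $B_{1/2}\times(-1,-3/4]$ at least one of them is $\geq \tfrac12$, so one of them, call it $v$, satisfies $v\geq\tfrac12$ on a set of measure at least half the slab. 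The weak Harnack inequality for the transmission problem --- obtained through the standard parabolic chain (point estimate deduced from Theorem~\ref{Thm:ABP}, followed by a parabolic Calder\'on--Zygmund cube decomposition to produce an $L^\varepsilon$ bound on nonnegative supersolutions) --- then yields
\[ \inf_{\C_{r_0}} v \,\geq\, c_0 - C_0\big(\|g\|_{L^\infty(\Gamma)}+\|f\|_{L^{n+1}(\C_1)}\big), \]
for some universal $c_0>0$ and $r_0<1/4$. Translating back to $u$: in one case $u \geq c_0-\tfrac12-C_0(\ldots)$, in the other $u \leq \tfrac12-c_0+C_0(\ldots)$, so in either case $\osc_{\C_{r_0}}u \leq 1-c_0+C_0(\|g\|_{L^\infty}+\|f\|_{L^{n+1}})$, and undoing the normalization finishes the proof with $\mu = 1-c_0$.

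The main obstacle is the transmission-adapted point estimate underlying the weak Harnack: one applies Theorem~\ref{Thm:ABP} to a truncation of $v$, and the Hopf barrier of Lemma~\ref{Lm:Barrier11} --- with its quantitative normal-derivative bound $w_\nu\geq c_0>0$ on $\Gamma$ --- is used to absorb the transmission datum $g$ into a lower-order term so that the convex envelope argument promotes the measure-theoretic lower bound to a pointwise one. A secondary difficulty is the parabolic waiting time between the past slab and $\C_{r_0}$, which is handled by choosing $r_0$ small in terms of $\|\psi\|_{C^{1,\alpha}(\overline{\C_1'})}$, ensuring both that the two cylinders are at definite parabolic distance and that the interface is essentially flat at this scale, so the barrier carries over uniformly across $\Gamma$.
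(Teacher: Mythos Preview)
Your outline is sound in principle, but the paper takes a markedly simpler route that avoids building the full Krylov--Safonov machinery for the transmission problem. Rather than proving a weak Harnack (point estimate $\Rightarrow$ parabolic Calder\'on--Zygmund $\Rightarrow$ $L^\varepsilon$ bound) across $\Gamma$, the paper normalizes so that the data are $\leq\varepsilon_0$, fixes a single reference point $(\bar x,\bar t)=(2\sigma r e_n,-12\sigma^2 r^2)$ lying at definite distance \emph{inside} $\Omega^+$, and applies the \emph{interior} parabolic Harnack inequality on that side to push the information forward into a cube $\widetilde K^1\subset\subset\Omega^+$ (Lemma~\ref{Lm:Harnack_new}). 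The transmission is then crossed once, by comparing with an explicit barrier $v=\eta(\widetilde\phi-\tfrac12)+\tfrac{\varepsilon_0}{c_0}w$, where $\widetilde\phi$ is Wang's barrier (Lemma~\ref{Lm:Barrier_Wang}) and $w$ is the Hopf barrier from the ABP proof; the ABP estimate (Theorem~\ref{Thm:ABP}) applied to $u+1-v$ then gives $\inf_{\C_{r_0}}u\geq -1+c$. The oscillation decay follows by the standard dichotomy on the sign of $\widetilde u(\bar x,\bar t)$.

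What each approach buys: the paper's argument is short and uses only tools already in hand (interior Harnack, one barrier, one ABP application), at the cost of a somewhat delicate geometric setup to place $\widetilde K^1$, $\widetilde K^2$, $\widetilde K^3$ correctly relative to $\Gamma$ and handle the waiting time. Your route is conceptually cleaner and more robust---it would give a genuine weak Harnack across $\Gamma$, useful in its own right---but the ``standard parabolic chain'' you invoke is not readily available here: the point estimate must be re-proved with a barrier satisfying the transmission condition at every scale and location (including sub-cylinders intersecting $\Gamma$ obliquely), and the covering argument must be checked to respect the interface geometry. This is substantial work you have only sketched; if you pursue it, the composite barrier $\eta\widetilde\phi+\tfrac{\varepsilon_0}{c_0}w$ from the paper's Lemma~\ref{Lm:Harnack_new} is exactly the object you would need at the base scale.
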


First, we need two preliminary lemmas. The following barrier was constructed in \cite[Lemma~3.22]{Wang1992} (see also \cite[Lemma~4.16]{imbert2013introduction}).

\begin{Lemma} \label{Lm:Barrier_Wang}
    Let $\mathcal{P}=B_1\times(0,1]$, $K^1=(-r,r)^n\times(0,r^2]$, and $K^2=(-3r,3r)^n\times(r^2,10r^2]$, for $r<\tfrac{1}{3\sqrt{n}}$. Then there is a function $\phi\in C^{1,1}(\P)$ such that
    \begin{align*} 
        \begin{cases}
            \phi(x,t)\geq 1 &\text{ in } K^2,\\
            \phi\leq 0 &\text{ on } \p_p \P,\\
            \p_t \phi-\Mm(D^2\phi)\leq 0 &\text{ in } \P\setminus K^1.
        \end{cases}
    \end{align*}
    Moreover, there is $C_0>0$, depending only on $n$, $\lambda$, $\Lambda$, and $r$, such that
    \[
    \|\phi\|_{C^{1,1}(\P)}\leq C_0.
    \]
\end{Lemma}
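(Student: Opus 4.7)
My plan is to construct the barrier $\phi$ explicitly, mirroring the classical Krylov--Tso--Wang construction in \cite[Lemma~3.22]{Wang1992} and \cite[Lemma~4.16]{imbert2013introduction}. The guiding idea is to choose a smooth ansatz whose spatial Hessian is negative definite on a set of parabolic shape $\{|x|^2 \lesssim t+\tau\}$, which can be aligned with $K^1$ for a suitable parameter $\tau$.

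\medskip

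\textbf{Step 1: Ansatz and Hessian computation.} I would take a scaled Tychonov-type function
\[
\phi_0(x,t) = (t+\tau)^{-n/2}\,e^{-\beta|x|^2/(t+\tau)},
\]
where $\beta,\tau>0$ will be chosen later depending only on $n,\lambda,\Lambda,r$. A direct computation gives
\[
D^2\phi_0 = \phi_0 \left[\frac{4\beta^2}{(t+\tau)^2}\, xx^T - \frac{2\beta}{t+\tau}\, I\right],
\]
so $D^2\phi_0$ has one radial eigenvalue $\phi_0\,\frac{4\beta^2|x|^2 - 2\beta(t+\tau)}{(t+\tau)^2}$, positive iff $|x|^2 > (t+\tau)/(2\beta)$, together with $n-1$ orthogonal eigenvalues equal to $-\phi_0\cdot 2\beta/(t+\tau)<0$. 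Hence the set on which $D^2\phi_0$ is fully negative definite is the paraboloid $\{|x|^2 \le (t+\tau)/(2\beta)\}$, which has exactly the parabolic shape of $K^1$.

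\medskip

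\textbf{Step 2: Parameter choice and normalization.} Using $\Mm(D^2\phi_0) = \lambda\,\tr(D^2\phi_0)^+ - \Lambda\,\tr(D^2\phi_0)^-$ together with the computation above and the formula $\p_t\phi_0 = \phi_0\bigl[-\tfrac{n}{2(t+\tau)} + \tfrac{\beta|x|^2}{(t+\tau)^2}\bigr]$, the inequality $\p_t\phi_0 - \Mm(D^2\phi_0)\leq 0$ reduces to an algebraic inequality linear in $|x|^2$ and $t+\tau$. I would fix $\tau$ of order $r^2$ so that $\{|x|^2 \le (t+\tau)/(2\beta)\}\cap\P \subseteq K^1$, and then tune $\beta$ so the algebraic inequality holds in $\P\setminus K^1$. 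If a single exponential ansatz does not balance both regimes, I would add a correction term, either a strictly convex quadratic $c_0|x|^2$ (which strengthens $\Mm(D^2\phi_0)$ without spoiling the other conditions) or the sum of two Tychonov ansatzes with different parameters. Once the subsolution inequality is secured in $\P\setminus K^1$, I would rescale by setting $\phi = c_1\phi_0 + c_2$, with $c_1,c_2$ depending only on $n,\lambda,\Lambda,r$, so that $\phi \le 0$ on $\p_p\P$ and $\phi \ge 1$ on $K^2$. Both conditions are compatible: $\phi_0$ is uniformly bounded below by a positive constant on the set $\{|x|\le 3r,\ t\ge r^2\}\supseteq K^2$, and uniformly bounded above on $\p_p\P$, since the parabolic distance between $K^2$ and $\p_p\P$ is strictly positive.

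\medskip

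\textbf{Expected main obstacle.} The delicate algebraic point is the competition in $\p_t\phi_0 - \Mm(D^2\phi_0)\le 0$ between the positive piece $\beta|x|^2/(t+\tau)^2$ from $\p_t\phi_0$ and the positive radial eigenvalue contribution $\lambda(4\beta^2|x|^2 - 2\beta(t+\tau))/(t+\tau)^2$ from $\Mm(D^2\phi_0)$: dominance requires $\beta$ sufficiently large, while the time-only term forces $\beta$ to be not too large, and the two conditions only play well together for $\Lambda/\lambda$ in a suitable range. This is the reason a correction term (convex quadratic or auxiliary ansatz) is almost surely needed. Once an explicit $\phi$ is obtained, the $C^{1,1}(\P)$ norm bound is immediate from the smoothness of the ansatz on $\overline{\P}$ (or from a standard mollification in a thin neighborhood of any pasting interface), and the constant $C_0$ is universal in $n,\lambda,\Lambda,r$ by construction.
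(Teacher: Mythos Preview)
The paper does not give its own proof of this lemma; it merely records the statement and cites \cite[Lemma~3.22]{Wang1992} and \cite[Lemma~4.16]{imbert2013introduction}. Your plan is in the spirit of those references, but Step~2 contains a genuine gap in the normalization, not just in the subsolution balance you flag as the ``main obstacle''. With a shift $\tau>0$, the Tychonov profile $\phi_0$ is smooth on all of $\overline{\P}$ and attains its global maximum $\phi_0(0,0)=\tau^{-n/2}$ at the spatial origin on the bottom face, which lies on $\p_p\P$. On $K^2$ one has $t>r^2$, hence $\phi_0\le(r^2+\tau)^{-n/2}<\tau^{-n/2}$. Thus for any affine renormalization $\phi=c_1\phi_0+c_2$ with $c_1>0$ one gets $\sup_{K^2}\phi<\phi(0,0)$, so the requirements $\phi\le0$ on $\p_p\P$ and $\phi\ge1$ on $K^2$ are \emph{incompatible}; your claim that ``both conditions are compatible \dots\ since the parabolic distance between $K^2$ and $\p_p\P$ is strictly positive'' overlooks this ordering. (Taking $c_1<0$ flips the Pucci operator and destroys the subsolution inequality.) Relatedly, the containment $\{|x|^2\le(t+\tau)/(2\beta)\}\cap\P\subseteq K^1$ you propose is geometrically impossible: that paraboloid persists for every $t\in(0,1]$, while $K^1$ lives in $\{t\le r^2\}$.

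The remedy in the cited constructions is essentially to take $\tau=0$, so that $\phi_0(x,t)=t^{-p}e^{-\beta|x|^2/t}$ (with $p$ large depending on $n,\lambda,\Lambda$) tends to $0$ on $\p_p\P\setminus\{0\}$ but blows up along $\{x=0,\ t\downarrow0\}$, and then to cap it: replace $\phi_0$ by a $C^{1,1}$ concave truncation $h(\phi_0)$ that agrees with $\phi_0$ on $\{\phi_0\le M\}$ and is constant on $\{\phi_0\ge 2M\}$. The truncation spoils the subsolution inequality only on the set $\{\phi_0>M\}$, which for $M$ large (depending on $r$) is a thin paraboloid around the $t$-axis that can be forced inside $K^1$; this is precisely why the exceptional set $K^1$ appears in the statement. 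After that, the affine shift to achieve $\phi\le0$ on $\p_p\P$ and $\phi\ge1$ on $K^2$ goes through, and the $C^{1,1}$ bound is immediate from the explicit formula.
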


The main lemma is the following Harnack inequality.

\begin{Lemma}[Harnack inequality]\label{Lm:Harnack_new}
Let $u$ be as in Theorem~\ref{thm:intholder}. Take $r= \tfrac{1}{4\sqrt n}$ and $\sigma = \tfrac{1}{2(1+2r)}$. Assume that $\|\psi\|_{L^\infty(\C_1')}\leq \tfrac{\sigma r}{2}$, $\|u\|_{L^\infty(\C_1)}\leq 1$, and 
$$
 u(\bar x , \bar t) \geq 0 \quad \text{ with } \quad (\bar x, \bar t)=(2\sigma r e_n, -12\sigma^2 r^2).
$$
There exist $0<\vep_0,c <1$, depending only on $n$, $\lambda$, $\Lambda$, $\alpha$, and $[\psi]_{C^{1,\alpha}(\overline{\C_1'})}$, such that if
$$\|g\|_{L^\infty(\Gamma)} + \|f\|_{L^{n+1}(\C_1)}\leq \vep_0,$$
then there is a dimensional constant  $r_0<\tfrac{1}{2(1+2\sqrt{n})}$ such that
\begin{equation*} 
\inf_{\C_{r_0}} u \geq -1 + c.
\end{equation*}
\end{Lemma}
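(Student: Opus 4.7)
First, set $v := u+1$, so that $0 \le v \le 2$ in $\C_1$, $v(\bar x,\bar t) \ge 1$, $v \in \mathcal{S}^*(f^\pm)$ in $\Omega^\pm$, and $v_\nu^+ - v_\nu^- = g$ on $\Gamma$; the conclusion reduces to showing $\inf_{\C_{r_0}} v \ge c_*$ for a universal $c_* > 0$. Since $\bar x_n = 2\sigma r$ and $\|\psi\|_{L^\infty(\C_1')} \le \sigma r/2$, the spatial distance from $\bar x$ to the graph of $\psi(\cdot,\bar t)$ is at least $\tfrac{3}{2}\sigma r$, so the cylinder $\C_{\sigma r/2}(\bar x,\bar t)$ sits entirely inside $\Omega^+$. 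Applying the classical interior parabolic weak Harnack inequality for the Pucci class (Krylov-Safonov, see \cite{Wang1992}) to $v\ge 0$ in this cylinder yields a universal $c_1 > 0$ and a forward-in-time source set $E\subset\Omega^+$ reaching times close to $0$, on which
\[
v \ge c_1 - C\|f^+\|_{L^{n+1}(\C_1)} \ge c_1 - C\varepsilon_0.
\]

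Next, I would construct a comparison function $\Phi$ on $\overline{\C_{r_1}}$, for some dimensional $r_0 < r_1 < 1$, with the following properties: (i) $\partial_t \Phi - \Mm(D^2 \Phi) \le 0$ in $(\C_{r_1}\cap \Omega^\pm)\setminus K$, where $K\subset E$ is a fixed closed set on which Step~1 gives $v \ge c_1/2$; (ii) $\Phi \le 0$ on $\partial_p \C_{r_1}$; (iii) $\Phi \le M$ on $K$ and $\Phi \ge c_0$ on $\C_{r_0}$ for universal $M, c_0 > 0$; and (iv) $\Phi_\nu^+ - \Phi_\nu^- \ge \delta_0 > 0$ on $\Gamma\cap \C_{r_1}$ for a universal $\delta_0$. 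The main building block is Wang's barrier $\phi$ from Lemma~\ref{Lm:Barrier_Wang}, scaled and translated so that its source $K^2$ contains $K$ and its defect set $K^1$ covers $\C_{r_0}$. The precise parameters $\sigma$ and $r$ in the hypotheses have been engineered exactly so that the scaled Wang geometry lines up with $(\bar x,\bar t)$ relative to $\C_{r_0}$. To achieve (iv) I add to $\phi$ a small correction $\eta\,|x_n-\psi(x',t)|$, which creates a normal-derivative jump $\Phi_\nu^+-\Phi_\nu^-=2\eta$ across $\Gamma$ and, for $\eta$ small (depending on $[\psi]_{C^{1,\alpha}(\overline{\C_1'})}$), preserves the Pucci subsolution property (i).

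Setting $w := v - \tfrac{c_1}{2M}\Phi$, property (i) gives $w\in \overline{\mathcal{S}}(|f^\pm|)$ in $\Omega^\pm$; property (ii) together with $v\ge 0$ gives $w\ge 0$ on $\partial_p \C_{r_1}$; and property (iii) with Step~1 gives $w \ge 0$ on $K$. On $\Gamma$, the transmission condition for $v$ and property (iv) yield
\[
w_\nu^+ - w_\nu^- \;=\; g - \tfrac{c_1}{2M}(\Phi_\nu^+-\Phi_\nu^-) \;\le\; \|g\|_{L^\infty(\Gamma)} - \tfrac{c_1\delta_0}{2M} \;\le\; -\tfrac{c_1\delta_0}{4M},
\]
provided $\varepsilon_0 \le c_1\delta_0/(4M)$. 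Applying the ABP-Krylov-Tso estimate (Theorem~\ref{Thm:ABP}) to $w$ in the parabolic domain $\C_{r_1}\setminus K$ (whose parabolic boundary $\partial_p\C_{r_1}\cup \partial K$ has $w_-\equiv 0$) gives $\sup_{\C_{r_1}\setminus K} w_- \le C\varepsilon_0$. Hence $v \ge \tfrac{c_1}{2M}\Phi - C\varepsilon_0$, and by (iii), $v \ge \tfrac{c_1 c_0}{2M} - C\varepsilon_0 \ge c_* > 0$ on $\C_{r_0}$ after a final shrinking of $\varepsilon_0$. Translating back to $u=v-1$ yields the claim.

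The hardest step will be the barrier construction in Step~2. As highlighted in the introduction, the parabolic waiting time built into the interior Harnack forces $K$ to sit strictly in the past of $\C_{r_0}$, so $\Phi$ must simultaneously bridge a nontrivial temporal gap and enforce the definite normal-derivative sign (iv) across the only $C^{1,\alpha}$ interface $\Gamma$; matching Wang's barrier geometry (the relative positions of $K^1$ and $K^2$ in Lemma~\ref{Lm:Barrier_Wang}) with the prescribed $(\bar x,\bar t)$ and $\C_{r_0}$ is what dictates the exact choice of $\sigma$, $r$, and $r_0$, and the dependence of $\delta_0$, $c$, and $\varepsilon_0$ on $[\psi]_{C^{1,\alpha}(\overline{\C_1'})}$ enters precisely through the correction term added to $\phi$.
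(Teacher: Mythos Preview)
Your overall strategy mirrors the paper's: interior Harnack near $(\bar x,\bar t)$, a scaled Wang barrier to propagate the lower bound across $\Gamma$, and the ABP--Krylov--Tso estimate to close. There is, however, one genuine gap and one minor confusion.

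\textbf{The gap: your transmission correction.} You propose adding $\eta\,|x_n-\psi(x',t)|$ to Wang's barrier in order to manufacture the jump $\Phi_\nu^+-\Phi_\nu^-=2\eta$, and you assert that for $\eta$ small this ``preserves the Pucci subsolution property.'' But $\psi$ is only $C^{1,\alpha}$ in the parabolic sense: $\nabla'\psi\in C^{0,\alpha}$ and $\psi\in C_t^{(1+\alpha)/2}$. On $\Omega^+$ your correction equals $\eta(x_n-\psi)$, whose formal contribution to $\partial_t-\Mm(D^2\cdot)$ is $-\eta\,\partial_t\psi+\eta\,\Mm(D^2_{x'}\psi)$, and neither $\partial_t\psi$ nor $D^2_{x'}\psi$ exists under your hypotheses, let alone admits a bound of size $O(\eta)[\psi]_{C^{1,\alpha}}$. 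There is no way to verify (i) for your $\Phi$, classically or in the viscosity sense, without additional regularity on $\psi$ that you do not have. The paper resolves this by a different correction: it adds $\tfrac{\varepsilon_0}{c_0}w$, where $w$ is the Hopf--lemma barrier built in Lemma~\ref{Lm:Barrier11} (and used already in the proof of Theorem~\ref{Thm:ABP}). This $w$ is a \emph{classical} solution of $\partial_t w-\Mm(D^2w)=0$ on each side of $\Gamma$, so adding it costs nothing in (i), while the Hopf lemma guarantees $w_\nu^+\ge c^+>0$ and $w_\nu^-\le -c^-<0$, giving exactly the positive jump you need for (iv). The dependence of $c,\varepsilon_0$ on $[\psi]_{C^{1,\alpha}}$ enters through $c^\pm$, not through any differentiation of $\psi$.

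\textbf{The confusion: roles of $K^1$ and $K^2$.} In Wang's barrier (Lemma~\ref{Lm:Barrier_Wang}) the subsolution inequality fails only on $K^1$ and $\phi\ge1$ holds on $K^2$; thus $K^1$ is the set you excise (your $K$, where the Harnack bound feeds in) and $K^2$ is the output set that must contain $\C_{r_0}$. Your sentence ``its source $K^2$ contains $K$ and its defect set $K^1$ covers $\C_{r_0}$'' has these swapped, though your properties (i)--(iii) are stated correctly. In the paper's geometry the interior Harnack carries information from $\widetilde K^3\ni(\bar x,\bar t)$ forward to $\widetilde K^1$, and then Wang's barrier carries it from $\widetilde K^1$ forward to $\widetilde K^2\supset\C_{r_0}$.
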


\begin{proof}
Fix $0<\vep_0<1$ to be chosen small.
Set $r= \tfrac{1}{4\sqrt n}$ and $\sigma = \tfrac{1}{2(1+2r)}$.
 Let $\bar x=2\sigma r e_n$, $\bar t=-12\sigma^2 r^2$, and $\tilde t= \bar t+2\sigma^2r^2=-10\sigma^2r^2$.
Define the cylinders (see Figure~\ref{fig:harnack}):
\begin{align*}
\widetilde \P & := B_\sigma\times (0,\sigma^2]+(\bar x, \tilde t),\\
\widetilde K^1& :=(-\sigma r, \sigma r)^n\times (0,\sigma^2r^2]+ (\bar x, \tilde t),\\ \widetilde K^2 & :=(-3\sigma r, 3\sigma r)^n\times (\sigma^2r^2,10\sigma^2r^2]+ (\bar x, \tilde t),\\
\widetilde K^3 &:= B_{\tfrac{\sigma r}{4}}\times(-\tfrac{\sigma^2 r^2}{4},0] + (\bar x, \bar t). 
\end{align*}
Note that since $\|\psi\|_{L^\infty(\C_1')}\leq \tfrac{\sigma r}{2}$, then ${\widetilde K^1}\subset \subset\Omega^+$. Moreover, $\widetilde K^2\cap \Omega^-\neq \emptyset$.

\begin{figure}[htbp]
  \centering

\begin{tikzpicture}[scale=4]

\draw[thick] (-0.3,-0.45) rectangle (0.9,0.15);
\node at (0.8,0.24) {$\widetilde{\P}$};

\draw[thick] (-1,-1.1) rectangle (1,0);
\node at (-0.5,0.08) {$\C_1$};

\node at (-1.05,-1.1) {$-1~~~$};
\node at (-1.05,-0.441) {$\tilde t~-$};
\node at (-1.05, -0.73) {$\bar t~-$};
\node at (0,-1.21) {$\sigma r$};
\node at (0.3,-1.21) {$\bar{x}$};
\node at (0,0.06) {$0$};

\fill (0.3,-0.75) circle[radius=0.42pt];

\draw[thin] (0.3,-1.14) -- (0.3,-1.06);
\draw [decorate,decoration={brace,mirror,amplitude=5pt}, thick]
  (-0.1,-1.1) -- (0.1,-1.1) ;

\draw[thick, decorate, decoration={snake, segment length=15pt, amplitude=3pt}] (0,-1.1) -- (0,0);

\draw[dashed] (-0.1,-1.1) -- (-0.1,0);
\draw[dashed] (0.1,-1.1) -- (0.1,0);

\draw[green, thick] (-0.15,0) rectangle (0.15,-0.15);
\fill[green!20, opacity=0.8] (-0.15,0) rectangle (0.15,-0.15);
\node[green!50!black] at (0.05,-0.075) {$\C_{r_0}$};

\draw[blue, thick] (0.15,-0.45) rectangle (0.45,-0.3);
\node[blue] at (0.3,-0.38) {$\widetilde{K}^1$};

\draw[blue, thick] (-0.25,-0.3) rectangle (0.8,0);
\node[blue] at (0.3,-0.15) {$\widetilde{K}^2$};

\draw[thick] (0.225,-0.9) rectangle (0.375,-0.75);
\node at (0.3,-0.825) {$\widetilde{K}^3$};

\node at (0.7,-0.9) {$\Omega^+$};
\node at (-0.7,-0.9) {$\Omega^-$};
\node at (0.7,-0.38) {$\widetilde\P_1$};
\end{tikzpicture}

  \caption{The cylinders defined in the proof.}
  \label{fig:harnack}
\end{figure}

By the interior parabolic Harnack inequality applied to $u+1\geq0$, 
$$
\sup_{\widetilde K^3} (u+1) \leq C \Big( \inf_{\widetilde K^1} (u+1) + \|f^+\|_{L^{n+1}(\Omega^+)}\Big), 
$$
where $C>0$ depends only on $n$, $\lambda$, and $\Lambda$. Since  $u(\bar x, \bar t)\geq 0$ and $\|f\|_{L^{n+1}(\C_1)}\leq \vep_0$, we get
\begin{equation} \label{eq:intharnack}
    \inf_{\widetilde K^1} u\geq -1+ \widetilde c,
\end{equation}
where $\widetilde c= \frac{1}{C}-\vep_0>0$, taking $\vep_0<1/C$.  

Let $\phi$ be as in Lemma~\ref{Lm:Barrier_Wang}, and consider the function
$$
\widetilde \phi (x,t) = \phi \Big(\frac{x-\bar x}{\sigma}, \frac{t- \tilde t}{\sigma^2} \Big) \quad \text{ for } (x,t) \in \widetilde \P.
$$ 
Then $\widetilde \phi$ satisfies $\|\widetilde\phi\|_{C^{1,1}(\widetilde \P)}\leq C_0$, and
    \begin{align*}
        \begin{cases}
            \widetilde\phi(x,t)\geq 1 &\text{ in } \widetilde K^2,\\
            \widetilde\phi\leq 0 &\text{ on } \p_p \widetilde \P,\\
            \p_t \widetilde\phi-\Mm(D^2\widetilde\phi)\leq 0 &\text{ in } \widetilde \P\setminus \widetilde K^1.
        \end{cases}
    \end{align*}
Let $\widetilde \P_1= \widetilde \P \cap \C_1$. For $\eta>0$ small to be determined, let
$$
    v=\eta~\Big( \widetilde\phi- \frac{1}{2}\Big)+\frac{\varepsilon_0}{c_0}w \quad \text{ in } \widetilde \P_1,
$$
where $w$ and $c_0$ are given in the proof of Theorem \ref{Thm:ABP}. By construction,
$$
    \p_t v-\Mm(D^2v)\leq \eta \big(\p_t \widetilde\phi-\Mm(D^2 \widetilde\phi) \big)\leq 0 \quad \text{ in }  (\widetilde \P_1\setminus \widetilde K_1)\cap \Omega^\pm.
$$
Since $\widetilde\phi$ is differentiable across $\Gamma$,
    \begin{align*}
        v_\nu^+-v_\nu^-=\frac{\varepsilon_0}{c_0}(w_\nu^+-w_\nu^-)\geq 2\varepsilon_0>\|g\|_\infty\geq g \quad \text{ on } \Gamma\cap \widetilde \P_1.
    \end{align*}
    We will choose $\vep_0$ and $\eta$ such that $v \leq u+1$ on $\partial_p (\widetilde \P_1 \setminus \widetilde K^1)=(\partial_p \widetilde \P_1 \cup \partial \widetilde K^1) \setminus \partial_B \widetilde K^1$, where  $\partial \widetilde K^1$ denotes the topological boundary of $\widetilde K^1$. Indeed, since $\widetilde\phi\leq 0$ and $w\leq 1$ on $\p_p \widetilde \P_1$, then
        $$
    v\leq - \frac{\eta}{2} + \frac{\varepsilon_0}{c_0} \leq 0\quad  \text{ on } \p_p \widetilde \P_1,
    $$
    provided $\vep_0 \leq \frac{\eta c_0}{2}$.  In this case, we also have
    $$
    v \leq \eta \|\widetilde \phi\|_{L^\infty(\partial \widetilde K^1)} + \frac{\varepsilon_0}{c_0} \leq  \eta \Big(C_0+\frac{1}{2}\Big) \quad \text{ on } \partial \widetilde K^1.
    $$
    First, choose $\eta\leq \frac{\widetilde c }{C_0 +1/2}$, and then, choose $\vep_0 \leq \frac{\eta c_0}{2}$. It follows that $v\leq \widetilde c$ on $\partial \widetilde K^1$. Since $u+1\geq 0$ in $\widetilde \P_1$ and $ u+1\geq \widetilde c$ on $\widetilde K_1$ by \eqref{eq:intharnack}, we have
    \begin{equation}\label{eq:bdrycond}
    v\leq u+1\quad \text{ on } \partial_p (\widetilde \P_1 \setminus \widetilde K^1).
    \end{equation}
    
Since  $u  \in \mathcal{S}^*(f^{\pm})$ in $\Omega^\pm$ and $v \in \underline{\mathcal{S}}(0)$ in $(\widetilde \P_1 \setminus \widetilde K^1)\cap \Omega^\pm$,  noting that $v\in C^{1,1}(\widetilde\P_1\cap \Omega^\pm)$ and using \cite[Lemma~3.12]{Wang1992}, we have $u+1-v\in \overline{\mathcal{S}}(|f^\pm|)$ in $(\widetilde \P_1 \setminus \widetilde K^1)\cap \Omega^\pm$. Also, the following transmission condition holds in the viscosity sense:
\[
\p_\nu(u+1-v)^+-\p_\nu(u+1-v)^-\leq 0\quad  \text{ on } \Gamma \cap \widetilde \P_1.
\]
Hence, applying Theorem \ref{Thm:ABP} to $u+1-v$ in $\widetilde \P_1\setminus \widetilde K^1$, and using \eqref{eq:bdrycond}, we get
\[
\sup_{\widetilde \P_1\setminus \widetilde K^1} (u+1-v)_-\leq \sup_{\p_p(\widetilde \P_1\setminus \widetilde K^1)}(u+1-v)_-+C\|f\|_{L^{n+1}(\C_1)}\leq C\varepsilon_0.
\]
Therefore $u\geq -1+v-C\varepsilon_0$ in $\widetilde \P_1\setminus \widetilde K^1$. Moreover, since $\widetilde \phi\geq 1$ in $\widetilde K^2$, then $v\geq \eta$ in $\widetilde K^2$,
and thus,
\begin{align*} 
    \inf_{\C_{r_0}} u\geq -1+c,
\end{align*}
where $c=\eta-C\varepsilon_0>0$, taking $\vep_0$ sufficiently small, and $r_0=\sigma r< \frac{1}{2(1+2\sqrt{n})}$.
\end{proof}

\begin{customproof}{Theorem~\ref{lem:oscdecay}}
Let $\sigma$, $r$, and $\vep_0$ be as in Lemma~\ref{Lm:Harnack_new}.
After a translation, rotation and rescaling, we may assume that $\psi(0)=0$, $\nabla' \psi(0)=0$, and
\[
[\psi]_{\Coa(0)}:= \sup_{(x',t)\in \C_1'\setminus\{0\}} \frac{|\psi(x',t)-\psi(0)-\nabla'\psi(0)\cdot x'|}{(|x'|^2+|t|)^{(1+\alpha)/2}}\leq \frac{\sigma r}{2}.
\]
Then $|\psi|\leq  \frac{\sigma r}{2}$ in $\C_1'$.
 Let $M = \|g\|_{L^\infty(\Gamma)} + \|f\|_{L^{n+1}(\C_1)}$, and consider
\[
\widetilde{u} = \frac{2u - (\inf_{\C_1} u + \sup_{\C_1} u)}{\mathrm{osc}_{\C_1} u + 2M/\varepsilon_0} \quad \text{ in } \C_1.
\]
We have $\widetilde u\in \mathcal{S}^*(\widetilde{f}^\pm)$ in $\Omega^\pm$, with
$
\widetilde{f}^\pm= 2f^\pm(\mathrm{osc}_{\C_1} u + 2M/\varepsilon_0)^{-1}.
$
Also, $\widetilde{u}_\nu^+ -  \widetilde{u}_\nu^- \le \widetilde{g}$ on $\Gamma$, in the viscosity sense, with
$
\widetilde{g} = 2 g(\mathrm{osc}_{\C_1} u + 2M/\varepsilon_0)^{-1}.
$
Note that 
$$\|\widetilde{u}\|_{L^\infty(\C_1)} \le 1 \quad  \text{and} \quad
\| \widetilde{g}\|_{L^\infty(\Gamma)} + \|\widetilde{f}\|_{L^{n+1}(\C_1)}  \le \varepsilon_0.$$

Let $(\bar x, \bar t)=(2\sigma r e_n, -12\sigma^2 r^2)$. If $\widetilde{u}(\bar{x},\bar t) \ge 0$ then, by Lemma \ref{Lm:Harnack_new}, there is a dimensional constant $r_0>0$ and a constant $c>0$, depending only on $n$, $\lambda$, $\Lambda$, $\alpha$, and $[\psi]_{C^{1,\alpha}(\overline{\C_1'})}$, such that
\[
\inf_{\C_{r_0}} \widetilde{u} \ge -1 + c.
\]
Otherwise, $\widetilde{u}(\bar{x},\bar t) < 0$, and applying the lemma to $-\widetilde{u}$, we get 
\[
\sup_{\C_{r_0}} \widetilde{u} \le 1-c.
\]
Either way, we have $\osc_{\C_{r_0}}\widetilde u\leq 2-c,$
which in terms of $u$ reads
\[
\osc_{\C_{r_0}}u\leq \mu \osc_{\C_1}u+C\left(\|g\|_{L^\infty(\Gamma)} + \|f\|_{L^{n+1}(\C_1)}\right),
\]
with $\mu = \frac{2-c}{2} < 1$ and $C = \frac{2}{\varepsilon_0}$.
\end{customproof}

If the support of $g$ is compactly contained in $\Gamma$, we obtain the following global H\"older regularity result. 
The proof is standard and we omit it here (e.g., see \cite[Proposition~4.12]{Caffarelli-Cabre}).

\begin{Proposition}[Global H\"older regularity]
\label{Pro:Holder_global} 
Let $\alpha_1$ be as in Theorem \ref{thm:intholder}. Let $u \in C(\overline\C_1)$ satisfy
\[
\begin{cases}
u \in \mathcal{S}(0) & \text{in } \Omega^\pm, \\
u_\nu^+ - u_\nu^- = g & \text{on } \Gamma, \\
u = \varphi & \text{on } \p_p \C_1,
\end{cases}
\]
where $g \in L^\infty(\Gamma)$, with $\mathrm{supp}(g) \subset \Gamma \cap \C_{1-2\rho}$, for some $0 < \rho < 1/4$, $\varphi \in C^{0,\alpha}(\p_p \C_1)$, and $\psi \in C^{1,\alpha}(\overline\C_1)$, for some $0 < \alpha < 1$. Then $u \in C^{0,\beta}(\overline\C_1)$, with $0 < \beta \le \min\{\alpha_1, \alpha/2\}$, and
\[
\|u\|_{C^{0,\beta}(\overline\C_1)} \le \frac{C}{\rho^\gamma} \left( \|\varphi\|_{C^{0,\alpha}(\p_p \C_1)} + \|g\|_{L^\infty(\Gamma)} \right),
\]
 where $\gamma = \max\{\alpha, \alpha_1\}$, and $C>0$ depends only on $n$, $\lambda$, $\Lambda$, $\alpha$, and $\|\psi\|_{C^{1,\alpha}(\overline{\C_1'})}$.
\end{Proposition}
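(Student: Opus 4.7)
The strategy is the standard one: combine the interior H\"older estimate (Theorem~\ref{thm:intholder}) with a boundary H\"older estimate near $\p_p\C_1$. The crucial simplification afforded by the hypothesis $\supp g\subset\Gamma\cap\C_{1-2\rho}$ is that in a parabolic collar of thickness $\sim\rho$ around $\p_p\C_1$, the transmission condition degenerates to $u_\nu^+-u_\nu^-=0$, so any barrier that is $C^1$ in the spatial variables across $\Gamma$ is automatically admissible there. Moreover, Theorem~\ref{Thm:ABP} provides $\|u\|_{L^\infty(\C_1)}\leq C(\|\varphi\|_{L^\infty(\p_p\C_1)}+\|g\|_{L^\infty(\Gamma)})$, so the task reduces to extracting the H\"older decay from $\varphi$.

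For an interior point $(x_0,t_0)\in\C_1$ at parabolic distance $\eta\geq\rho$ from $\p_p\C_1$, a parabolic rescaling of $\C_{\eta/2}(x_0,t_0)\subset\C_1$ to the unit cylinder (which leaves the $C^{1,\alpha}$ seminorm of $\psi$ invariant and only improves its $C^{1,\alpha}$ norm) combined with Theorem~\ref{thm:intholder} yields
\[
[u]_{C^{0,\alpha_1}(\C_{\eta/4}(x_0,t_0))}\leq C\eta^{-\alpha_1}\bigl(\|u\|_{L^\infty(\C_1)}+\|g\|_{L^\infty(\Gamma)}\bigr).
\]
For a boundary point $(x_0,t_0)\in\p_p\C_1$ and $r\leq\rho$, I work in $\C_r(x_0,t_0)\cap\C_1$, where $g\equiv 0$ on $\Gamma$, and build barriers of the form
\[
v^{\pm}(x,t)=\varphi(x_0,t_0)\pm L\,[\varphi]_{C^{0,\alpha}(\p_p\C_1)}\,d_p((x,t),(x_0,t_0))^{\alpha/2}\pm M\zeta(x,t),
\]
where the first variable term is a radially symmetric function of the parabolic distance (hence $C^1$ in $x$ across $\Gamma$), and $\zeta$ is a non-negative Pucci super-/sub-solution vanishing on $\p_p\C_1\cap\overline{\C_r(x_0,t_0)}$ whose spatial gradient is continuous across $\Gamma$. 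Such a $\zeta$ can be obtained by a rescaled version of the barrier in Lemma~\ref{Lm:Barrier11}, or more directly by taking a product of a time cut-off with a symmetric function of $x_n-\psi(x',t)$ and a spatial radial factor, both of which are even across $\Gamma$. Choosing $L$ to absorb the $C^{0,\alpha}$ oscillation of $\varphi$ on $\p_p\C_1\cap\C_r(x_0,t_0)$ and $M$ to dominate $u-\varphi(x_0,t_0)$ on $\p B_r(x_0)\times(t_0-r^2,t_0]$, Corollary~\ref{cor:MP} applied to $u-v^-$ and $v^+-u$ gives
\[
|u(x,t)-\varphi(x_0,t_0)|\leq C\rho^{-\alpha}\bigl(\|\varphi\|_{C^{0,\alpha}(\p_p\C_1)}+\|g\|_{L^\infty(\Gamma)}\bigr)d_p((x,t),(x_0,t_0))^{\alpha/2}.
\]

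To conclude, for $(x,t),(y,s)\in\overline{\C_1}$ at parabolic distances $\eta_1,\eta_2$ from $\p_p\C_1$ and with $d=d_p((x,t),(y,s))$, a standard case split finishes the argument: when both points lie in the interior region with $d\lesssim\min(\eta_1,\eta_2)$, the interior estimate applies directly; otherwise at least one of them can be joined to a point of $\p_p\C_1$ at parabolic distance $\lesssim d$, and two applications of the boundary estimate, chained through the H\"older norm of $\varphi$ on $\p_p\C_1$, close the argument with $\beta\leq\min\{\alpha_1,\alpha/2\}$. The factor $\rho^{-\gamma}$ with $\gamma=\max\{\alpha,\alpha_1\}$ originates from the $\eta^{-\alpha_1}$ blow-up in the interior step and the $\rho^{-\alpha}$ scaling of $\zeta$ in the boundary step. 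The main technical obstacle is the construction of $\zeta$: it must simultaneously be a Pucci barrier for the lateral and bottom parts of $\p_p\C_1$ and be $C^1$ across the $C^{1,\alpha}$ interface $\Gamma$, and it is the interplay between these two geometric constraints that makes the parabolic case more delicate than the elliptic analogue in \cite[Proposition~4.12]{Caffarelli-Cabre}.
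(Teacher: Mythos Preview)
The paper omits the proof of this proposition entirely, stating only that it is standard and referring to \cite[Proposition~4.12]{Caffarelli-Cabre} for the analogous elliptic argument. Your proposal follows precisely this standard strategy---interior estimate from Theorem~\ref{thm:intholder} combined with boundary barriers that exploit $g\equiv 0$ near $\p_p\C_1$---so there is nothing substantive to compare; the only caveat is that your choice $d_p^{\alpha/2}$ is not itself a Pucci supersolution, but this is handled in the standard way by using it only to control $\varphi$ on $\p_p\C_1$ and letting the auxiliary barrier $\zeta$ absorb the equation, exactly as in the reference.
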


\section{The flat interface problem}\label{Sec:flat}

In this section, we study transmission problems with flat interfaces. Namely,
\begin{equation} \label{eq:flatpb}
\begin{cases}
    \partial_t u - F^\pm(D^2u)=f^\pm & \text{ in } \C_1^\pm,\\
    u_{x_n}^+-u_{x_n}^- = g & \text{ on } \C_1',
    \end{cases}
\end{equation}
where $\C_1^\pm = \C_1 \cap \{\pm\, x_n >0\}$ and $\C_1'=\C_1\cap \{x_n=0\}$.

\begin{Definition}
    We say that a function $u\in \usc(\C_1)$ is a viscosity subsolution to \eqref{eq:flatpb} if for any $\varphi$ touching $u$ from above at $(x_0,t_0)\in \C_1$, the following holds, for some small $\delta>0$:
    \begin{enumerate}
        \item[$(i)$] If $(x_0,t_0)\in \C_1^\pm$ and $\varphi \in C^2(\C_\delta(x_0,t_0))$, then
        $$
        \partial_t \varphi (x_0,t_0) - F^\pm (D^2\varphi(x_0,t_0)) \leq f^\pm (x_0,t_0).
        $$
        \item[$(ii)$] If $(x_0,t_0)\in \C_1'$ and $\varphi \in C^2 ( \overline{\C_\delta^-}(x_0,t_0))\cap C^2(\overline{\C_\delta^+}(x_0,t_0))$, then
        $$
        \varphi_{x_n}^+(x_0,t_0) - \varphi_{x_n}^-(x_0,t_0) \geq g(x_0,t_0),
        $$
        where $\varphi^\pm$ is the restriction of $\varphi$ to $\C_\delta^\pm(x_0,t_0)$.
    \end{enumerate}
    The notions of supersolution and solution are defined as usual.
\end{Definition}

\begin{Remark}
    It is easy to check that in $(ii)$, it is equivalent to take test functions of the form
    \[
    \varphi(x,t)=P(x',t)+p^+x_n^+-p^-x_n^-,
    \]
    where $P$ is a quadratic polynomial in $x'$ and affine in $t$, and $p^\pm \in \R$ satisfy
    \[
    p^+-p^-\geq g(x_0,t_0).
    \]
\end{Remark}

The following lemma will be useful. The proof is similar to \cite[Lemma~4.2]{soria2023regularity}, so we omit it.

\begin{Lemma} \label{lem:equivdef}
    The above definition for subsolutions to \eqref{eq:flatpb} is equivalent to replacing $(ii)$ with the following statement: If $(x_0,t_0)\in \C_1'$ and $\varphi \in C^2 ( \overline{\C_\delta^-}(x_0,t_0))\cap C^2(\overline{\C_\delta^+}(x_0,t_0))$, then either
    $$
    \partial_t \varphi (x_0,t_0) - F^\pm (D^2\varphi(x_0,t_0)) \leq f^\pm (x_0,t_0) \quad \text{or} \quad \varphi_{x_n}^+(x_0,t_0) - \varphi_{x_n}^-(x_0,t_0) \geq g(x_0,t_0).
    $$
\end{Lemma}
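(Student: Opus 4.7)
The forward implication is immediate: if the transmission inequality $\varphi_{x_n}^+ - \varphi_{x_n}^- \geq g$ at $(x_0,t_0)$ holds, the second branch of the disjunction in the alternative formulation is satisfied automatically, so the alternative is trivially entailed by the original.

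For the backward implication, I will argue by contradiction. Suppose that $u$ satisfies the alternative and that, for some piecewise $C^2$ test function $\varphi$ touching $u$ from above at an interface point $(x_0,t_0)\in\C_1'$, the transmission condition fails strictly, i.e.,
\[
\varphi_{x_n}^+(x_0,t_0) - \varphi_{x_n}^-(x_0,t_0) < g(x_0,t_0)-\varepsilon
\]
for some $\varepsilon>0$. By the Remark above, I may assume $\varphi$ has the canonical form $\varphi(x,t)=P(x',t)+p^+x_n^+-p^-x_n^-$, with $P$ quadratic in $x'$ and affine in $t$. The crucial observation is that for such canonical test functions the one-sided Hessians coincide with the zero-extension $\widetilde D^2 P$ of the tangential Hessian of $P$ in $x'$, so the two interior inequalities collapse to the single condition $\partial_t P - F^\pm(\widetilde D^2 P) \leq f^\pm$ at $(x_0',t_0)$, depending only on $P$, while the transmission jump depends only on $(p^+,p^-)$.

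Since transmission fails, the alternative forces the common interior inequality to hold. The plan is then to construct a modified canonical test function $\widetilde\varphi = \widetilde P(x',t) + p^+x_n^+-p^-x_n^-$, keeping the slopes $p^\pm$ fixed so the transmission jump (and hence its failure) is preserved, such that $\widetilde\varphi$ still touches $u$ from above at $(x_0,t_0)$ yet $\partial_t\widetilde P - F^\pm(\widetilde D^2\widetilde P) > f^\pm$ at $(x_0',t_0)$ on both sides. Applying the alternative to $\widetilde\varphi$ then produces the required contradiction.

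The main obstacle is the construction of $\widetilde P$: a naive non-negative quadratic perturbation preserves touching from above but, by uniform ellipticity, only increases $F^\pm(\widetilde D^2\widetilde P)$, making the interior inequality easier to satisfy rather than harder. Following the strategy of the elliptic analog in \cite[Lemma~4.2]{soria2023regularity}, I will first replace $\varphi$ by $\varphi + \mu[|x-x_0|^2+(t_0-t)]$ to install a strict quadratic cushion of size $\mu$ (which preserves touching, the slopes $p^\pm$, and the transmission jump), and then subtract an anisotropic tangential perturbation $\eta(x_1-x_{0,1})^2$ with $\eta$ chosen slightly larger than $\mu$. The cushion absorbs the subtraction, so that touching is maintained, while by uniform ellipticity the single negative eigenvalue introduced in the $x_1$-direction dominates the positive tangential contributions to $F^\pm$, forcing $F^\pm(\widetilde D^2\widetilde P)$ below $F^\pm(\widetilde D^2 P)$ by a definite amount. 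Combined with the additional $-\mu$ contribution from differentiating the cushion in time, this drives $\partial_t\widetilde P - F^\pm(\widetilde D^2\widetilde P)$ strictly above $f^\pm$ on both sides simultaneously, closing the argument.
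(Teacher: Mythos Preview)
Your backward-implication strategy has a genuine gap: the tangential perturbation cannot both preserve touching and force the interior inequalities to fail. At the top time slice $t=t_0$, the cushion reduces to $\mu|x-x_0|^2$, and along the line $x=x_0+se_1$ the net perturbation is $(\mu-\eta)s^2$, which is strictly negative once $\eta>\mu$. Since the original $\varphi$ may coincide with $u$ along this line, your $\widetilde\varphi$ drops below $u$ and touching is lost. Moreover, ``slightly larger'' is not enough anyway: to overcome the absence of any lower bound on $\partial_tP-F^\pm(\widetilde D^2P)$ you would need $2\lambda\eta$ to dominate $\mu+2\Lambda\mu(n-1)+2\lambda\mu$ plus the unknown gap to $f^\pm$, i.e.\ $\eta$ arbitrarily large, which makes the touching failure worse.

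The argument in \cite[Lemma~4.2]{soria2023regularity} does not perturb tangentially; it exploits the \emph{normal} direction, using precisely the slack in the failed transmission condition. Since $p^+-p^-<g(x_0,t_0)-\varepsilon$, the function
\[
\widetilde\varphi(x,t)=\varphi(x,t)+\tfrac{\varepsilon}{2}|x_n|-Cx_n^2
\]
still has $\widetilde p^+-\widetilde p^-=p^+-p^-+\varepsilon<g(x_0,t_0)$, so transmission continues to fail. On the strip $|x_n|<\varepsilon/(2C)$ one has $\tfrac{\varepsilon}{2}|x_n|-Cx_n^2\geq 0$, hence $\widetilde\varphi\geq\varphi\geq u$ there with equality at $(x_0,t_0)$, so touching is preserved in a smaller cylinder. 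Finally, $D^2\widetilde\varphi^\pm=D^2\varphi^\pm-2C\,e_n\otimes e_n$ and $\partial_t\widetilde\varphi=\partial_t\varphi$, so by uniform ellipticity
\[
\partial_t\widetilde\varphi-F^\pm(D^2\widetilde\varphi^\pm)\geq \partial_t\varphi-F^\pm(D^2\varphi^\pm)+2\lambda C>f^\pm(x_0,t_0)
\]
for both signs once $C$ is large enough. Now all three branches of the alternative fail for $\widetilde\varphi$, giving the contradiction.
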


We will prove existence and uniqueness of viscosity solutions, and piecewise $C^{1,\alpha}$ regularity up to the flat interface. An important tool will be the $\vep$-envelopes.

\subsection{Upper and lower $\vep$-envelopes}

We consider a family of approximations in the $x'$ and $t$ variables, which play the same role as the Jensen's approximate solutions for elliptic equations (see \cite[Section~5.1]{Caffarelli-Cabre}).

\begin{Definition}\label{Def:jensen}
    Let $0<\rho<1$ and $\vep>0$. Given $u\in \usc(\C_1)$, we define the upper $\vep$-envelope of $u$ on $\overline\C_\rho$, with respect to the $x'$ and $t$ variables, as the function
    $$
u^\vep ((y',y_n),s) = \sup_{(x,t)\in \overline\C_\rho\cap \{x_n=y_n\}} \Big\{ u((x',y_n),t) - \frac{1}{\vep} |x'-y'|^2 - \frac{1}{\vep} (t-s)^2\Big\}
\quad \text{ for } (y,s)\in \overline\C_\rho.
    $$
Similarly, given $u \in \lsc(\C_1)$, we define the lower $\vep$-envelope of $u$ on $\overline\C_\rho$, with respect to the $x'$ and $t$ variables, as the function
    $$
u_\vep ((y',y_n),s) = \inf_{(x,t)\in \overline\C_\rho\cap \{x_n=y_n\}} \Big\{ u((x',y_n),t) + \frac{1}{\vep} |x'-y'|^2 + \frac{1}{\vep} (t-s)^2\Big\}
\quad \text{ for } (y,s)\in \overline\C_\rho.
    $$
\end{Definition}

\begin{Remark}
    Note that since $u\in \usc(\C_1)$, then the supremum in the definition of $u^\vep$ is attained. Hence, for any $(y,s)\in \overline\C_\rho$, there exists $(x_\vep, t_\vep)\in \overline\C_\rho\cap \{x_n=y_n\}$ such that 
    \begin{equation} \label{eq:supatt}
            u^\vep(y,s) = u(x_\vep, t_\vep) - \frac{1}{\vep} |x_\vep'-y'|^2 - \frac{1}{\vep} (t_\vep-s)^2.
    \end{equation}
    Furthermore, if $u$ is bounded in $\C_1$, then the above identity yields
    $$
    |x_\vep'-y'|^2 +  (t_\vep-s)^2 \leq 2\vep \|u\|_{L^\infty(\C_1)}, \quad \text{ for all } (y,s) \in \overline\C_\rho.
    $$
\end{Remark}

\begin{Lemma}\label{Lm:prop_jensen}
    The following properties hold:
    \begin{enumerate}
        \item[$(i)$] $u^\vep \geq u$ on $\overline{\C_\rho}$ and $\limsup_{\vep\to0} u^\vep = u$.
        \item[$(ii)$] $u^\vep$ is Lipschitz continuous with respect to $y'$ and $s$, and
        $$
        |u^\vep(y_1,s_1)-u^\vep(y_2,s_2)| \leq \frac{4\rho}{\vep} \big (|y_1'-y_2'| + |s_1-s_2| \big),
        $$
        for all $(y_1,s_1), (y_2,s_2) \in \overline{\C_\rho}$ with $(y_1)_n=(y_2)_n$.
        \item[$(iii)$] $u^\vep(\cdot, s)$ is punctually second order differentiable in $y'$ a.e.~in $B_\rho$ and for all $s\in [-\rho^2, 0]$.
          \end{enumerate}
\end{Lemma}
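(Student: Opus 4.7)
I would treat the three items separately, as each relies on a standard but distinct feature of sup-convolutions, adapted here to a partial sup (only in $x'$ and $t$, with $x_n$ frozen).

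For $(i)$, the inequality $u^\varepsilon\geq u$ is immediate by plugging $(x,t)=(y,s)$ into the supremum in Definition~\ref{Def:jensen}, since the penalty vanishes. For the limsup identity, I would use the bound stated in the remark: if $(x_\varepsilon,t_\varepsilon)$ realizes the supremum, then
\[
|x_\varepsilon'-y'|^2+(t_\varepsilon-s)^2\leq 2\varepsilon\|u\|_{L^\infty(\C_1)},
\]
so $(x_\varepsilon',t_\varepsilon)\to(y',s)$ as $\varepsilon\to 0$, while the $x_n$-coordinate is fixed at $y_n$. From \eqref{eq:supatt} and the upper semicontinuity of $u$, $\limsup_{\varepsilon\to 0}u^\varepsilon(y,s)\leq \limsup_{\varepsilon\to 0}u(x_\varepsilon,t_\varepsilon)\leq u(y,s)$, and combined with $u^\varepsilon\geq u$ this gives equality in the limit.

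For $(ii)$, I would use the standard sup-convolution trick. Fix $(y_1,s_1),(y_2,s_2)\in\overline{\C_\rho}$ with $(y_1)_n=(y_2)_n$, and let $(x_\varepsilon,t_\varepsilon)$ realize the sup for $u^\varepsilon(y_1,s_1)$. Using this same competitor in the sup for $u^\varepsilon(y_2,s_2)$ and subtracting gives
\[
u^\varepsilon(y_1,s_1)-u^\varepsilon(y_2,s_2)\leq \tfrac{1}{\varepsilon}\bigl[(y_1'-y_2')\cdot(2x_\varepsilon'-y_1'-y_2')+(s_1-s_2)(2t_\varepsilon-s_1-s_2)\bigr].
\]
Since $|x_\varepsilon'|,|y_i'|\leq\rho$ and $|t_\varepsilon|,|s_i|\leq\rho^2\leq \rho$, each of the two linear factors on the right is bounded by $4\rho$, which (after swapping roles of the two points) yields the claimed Lipschitz bound.

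For $(iii)$, the key observation is a semi-convexity identity: freezing $y_n$ and $s$,
\[
u^\varepsilon((y',y_n),s)+\tfrac{1}{\varepsilon}|y'|^2=\sup_{(x',t)}\Bigl\{u((x',y_n),t)-\tfrac{1}{\varepsilon}|x'|^2+\tfrac{2}{\varepsilon}x'\cdot y'-\tfrac{1}{\varepsilon}(t-s)^2\Bigr\},
\]
which exhibits $y'\mapsto u^\varepsilon((y',y_n),s)+\tfrac{1}{\varepsilon}|y'|^2$ as the supremum of affine functions of $y'$, hence convex. Alexandrov's theorem then gives punctual second order differentiability in $y'$ for a.e. $y'$ in the slice $\{y_n=\mathrm{const}\}\cap B_\rho$; a Fubini argument promotes this to a.e.\ $y\in B_\rho$ for each fixed $s\in[-\rho^2,0]$. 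I expect the main care point to be $(iii)$: one must check that the relevant supremum is indeed a supremum of affine functions of $y'$ (which forces us to exploit that $x_n$ is frozen at $y_n$ in the definition of $u^\varepsilon$), and that the resulting semi-convexity in $y'$ is uniform enough on each slice for Alexandrov to apply.
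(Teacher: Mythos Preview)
Your proposal is correct and follows essentially the same approach as the paper: items $(i)$ and $(ii)$ are declared standard in the paper (and your explicit arguments for them are fine), while for $(iii)$ the paper does exactly what you do---observe that $u^\varepsilon(\cdot,s)+\tfrac{1}{\varepsilon}|y'|^2$ is a supremum of affine functions of $y'$ (using that $x_n$ is frozen at $y_n$), hence convex in $y'$, and then invoke Alexandrov's theorem. Your added remark about Fubini to pass from a.e.\ $y'$ on each slice to a.e.\ $y\in B_\rho$ is a welcome clarification that the paper leaves implicit.
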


\begin{proof}
The proofs of $(i)$ and $(ii)$ are standard.
To prove $(iii)$, by definition of $u^\vep$, for all $s\in [-\rho^2,0]$, 
$$v(\cdot,s)=u^\vep(\cdot,s) + \frac{1}{\vep}|y'|^2$$
is the supremum of affine functions in $y'$. Hence, for all s, $v(\cdot, s)$ is a convex function on $\overline\C_\rho$ with respect to $y'$. By Alexandrov's theorem (see \cite[Theorem~1.5]{Caffarelli-Cabre}), it follows that $u^\vep(\cdot, s)$ is punctually second order differentiable in $y'$ for almost every $y\in B_\rho$ and for all $s\in [-\rho^2,0]$.
\end{proof}

Given a uniformly continuous function $h: \R^{n+1}\to \R$, we define its modulus of continuity with respect to the Euclidean distance as
$$
\omega_h (r) := \sup_{|x-y|^2 + |t-s|^2\leq r^2}\, |h(x,t)-h(y,s)| \quad \text{ for } r>0.
$$

\begin{Proposition}\label{Prop:jensen_solution}
    Let $f^\pm \in C(\C_1^\pm)$ and $g\in C(\C_1')$. If $u$ is a bounded viscosity subsolution to \eqref{eq:flatpb}, then for any $\vep>0$ small and $0<\rho <1$, the upper $\vep$-envelope  $u^\vep$ in $\overline\C_{\rho}$ is a viscosity subsolution to
    $$
    \begin{cases}
       \partial_t u^\vep -  F^\pm (D^2 u^\vep) = f_\vep^\pm & \text{ in } \C_r^\pm,\\
        (u^\vep)_{x_n}^+ - (u^\vep)_{x_n}^- = g_\vep & \text{ on } \C_r',
    \end{cases}
    $$
    with $r= \rho- r_\vep$, where $r_\vep = (2\vep \|u\|_{L^\infty(\C_1)})^\frac{1}{2}$,  $f_\vep^\pm = f^\pm+ \omega_{f^\pm}(r_\vep)$, and $g_\vep = g - \omega_g (r_\vep)$. A similar result holds for the lower $\varepsilon$-envelope $u_\varepsilon$ when $u$ is a supersolution.
\end{Proposition}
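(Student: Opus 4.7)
The approach is the standard translation-invariance trick for sup-convolutions, adapted so that the flat interface is preserved. The key observation is that the penalization in Definition~\ref{Def:jensen} involves only the tangential variables $(x',t)$; the normal coordinate $x_n$ is untouched. Writing $x' = y'+a'$, $t = s+b$, and recalling $r_\varepsilon = (2\varepsilon\|u\|_{L^\infty(\C_1)})^{1/2}$, I would first rewrite
\[
u^\varepsilon(y,s) = \sup_{(a',b)} \Big\{ u((y'+a',y_n),\,s+b) - \tfrac{1}{\varepsilon}|a'|^2 - \tfrac{1}{\varepsilon}b^2 \Big\},
\]
where the admissible shifts satisfy $|a'|^2+b^2 \leq r_\varepsilon^2$. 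Since $F^\pm$ is independent of $(x',t)$ and the interface $\{x_n=0\}$ is invariant under such shifts, each summand $v_{a',b}$ inherits from $u$ a viscosity subsolution property on the translated domain, and the goal is to propagate this property to the supremum $u^\varepsilon$.

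To do so, I would fix $(y_0,s_0)\in\C_r$ and take an attaining point $(x_\varepsilon,t_\varepsilon)\in\overline{\C_\rho}$ as in \eqref{eq:supatt}, setting $a_\varepsilon':=x_\varepsilon'-y_0'$ and $b_\varepsilon:=t_\varepsilon-s_0$, so that $|a_\varepsilon'|^2+b_\varepsilon^2\leq r_\varepsilon^2$ and crucially $(x_\varepsilon)_n=(y_0)_n$. Given a test function $\varphi$ touching $u^\varepsilon$ from above at $(y_0,s_0)$, a short computation based on $u^\varepsilon\geq v_{a_\varepsilon',b_\varepsilon}$ with equality at $(y_0,s_0)$ shows that the shifted function
\[
\tilde\varphi(x,t) := \varphi((x'-a_\varepsilon', x_n),\, t-b_\varepsilon) + \tfrac{1}{\varepsilon}|a_\varepsilon'|^2 + \tfrac{1}{\varepsilon}b_\varepsilon^2
\]
touches $u$ from above at $(x_\varepsilon,t_\varepsilon)$, with $\partial_t\tilde\varphi$, $D^2\tilde\varphi$ and the one-sided $\tilde\varphi_{x_n}^\pm$ at $(x_\varepsilon,t_\varepsilon)$ equal to the corresponding quantities of $\varphi$ at $(y_0,s_0)$. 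I would then split into two cases. If $(y_0,s_0)\in\C_r^\pm$, then $(x_\varepsilon,t_\varepsilon)\in\C_\rho^\pm$ (same sign of $x_n$), and the subsolution property of $u$ combined with uniform continuity of $f^\pm$ yields
\[
\partial_t\varphi(y_0,s_0)-F^\pm(D^2\varphi(y_0,s_0)) \leq f^\pm(x_\varepsilon,t_\varepsilon) \leq f^\pm(y_0,s_0) + \omega_{f^\pm}(r_\varepsilon) = f_\varepsilon^\pm(y_0,s_0).
\]
If $(y_0,s_0)\in\C_r'$, then $(x_\varepsilon,t_\varepsilon)\in\C_\rho'$, and Lemma~\ref{lem:equivdef} applied to $u$ at $(x_\varepsilon,t_\varepsilon)$ leaves two possibilities: either one of the PDE inequalities for $u$ holds (handled exactly as above to give the $f_\varepsilon^\pm$ bound for $\varphi$), or $\tilde\varphi_{x_n}^+(x_\varepsilon,t_\varepsilon)-\tilde\varphi_{x_n}^-(x_\varepsilon,t_\varepsilon)\geq g(x_\varepsilon,t_\varepsilon)$ holds, which translates into $\varphi_{x_n}^+(y_0,s_0)-\varphi_{x_n}^-(y_0,s_0) \geq g(y_0,s_0) - \omega_g(r_\varepsilon) = g_\varepsilon(y_0,s_0)$. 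Invoking Lemma~\ref{lem:equivdef} in reverse for $u^\varepsilon$ completes the verification.

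The decisive structural feature — without which the whole strategy breaks down — is that the sup-convolution is taken strictly in the variables tangential to the interface. A penalization in $x_n$ would move the flat interface under each shift, mixing the two sides of the transmission problem and destroying the correspondence between a test function for $u^\varepsilon$ and a test function for $u$. Because $(x_\varepsilon)_n = (y_0)_n$ is automatic here, the side of the interface is preserved by the translation and the argument reduces to the classical sup-convolution story, with $\omega_{f^\pm}$ and $\omega_g$ absorbing the $O(r_\varepsilon)$ perturbation of the data. The remaining technicality — ensuring that $(x_\varepsilon,t_\varepsilon)$ is interior to $\C_\rho$ so that the viscosity definition applies in a small enough neighborhood — is guaranteed precisely by the choice $r = \rho - r_\varepsilon$ together with the $L^\infty$ bound on $u$. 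The supersolution statement for $u_\varepsilon$ is entirely symmetric and requires no new ideas.
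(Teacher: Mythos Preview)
Your proposal is correct and follows essentially the same translation argument as the paper: shift the test function back by $(a_\varepsilon',b_\varepsilon)$ to touch $u$ at the attaining point, exploit that $(x_\varepsilon)_n=(y_0)_n$ so the side of the interface is preserved, and absorb the resulting $O(r_\varepsilon)$ perturbation into $\omega_{f^\pm}$ and $\omega_g$. The only cosmetic difference is that at the interface the paper restricts to the special test functions $P(x',t)+p^+x_n^+-p^-x_n^-$ (justified by the remark after the definition) and checks the transmission inequality directly, whereas you route through the equivalent ``either/or'' formulation of Lemma~\ref{lem:equivdef}; both are valid and amount to the same computation.
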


\begin{proof}
    Let $\varphi\in C^2(\C_r^+)$ touch $u^\varepsilon$ from above at a point $(\bar x, \bar t)$ in $\C_\delta(\bar x, \bar t)\subset \C_r^+$, for some $\delta>0$. Then, for $\varepsilon$ small, by \eqref{eq:supatt}, there exists $(\bar x_\varepsilon, \bar t_\varepsilon)\in \C_\rho\cap \{x_n=\bar x_n\}$ such that
    \[
    u^\vep(\bar x,\bar t) = u(\bar x_\vep, \bar t_\vep) - \tfrac{1}{\vep} d_\vep,
    \]
    where $d_\vep:= |\bar x_\varepsilon'-\bar x'|^2+(\bar t_\varepsilon-\bar t)^2\leq 2\varepsilon\|u\|_{L^\infty(\C_1)}.$
    Consider
    \[
    \phi(y,s) :=\varphi(y+\bar x-\bar x_\varepsilon,s+\bar t-\bar t_\varepsilon)+\tfrac{1}{\varepsilon}d_\varepsilon.
    \]
    Choosing $(y,s)$ sufficiently close to $(\bar x_\varepsilon,\bar t_\varepsilon)$ and $s\leq\bar t_\varepsilon$, we have $(y+\bar x-\bar x_\varepsilon,s+\bar t-\bar t_\varepsilon)\in \C_\delta(\bar x,\bar t)$. By the definition of $u^\varepsilon$,
    \begin{align*}
        u(y,s)\leq &\, u^\varepsilon(y+\bar x-\bar x_\varepsilon,s+\bar t-\bar t_\varepsilon)+\tfrac{1}{\varepsilon}d_\varepsilon
        \leq \varphi(y+\bar x-\bar x_\varepsilon,s+\bar t-\bar t_\varepsilon)+\tfrac{1}{\varepsilon}d_\varepsilon=\phi(y,s),
    \end{align*}
    with equality at $(y,s)=(\bar x_\varepsilon,\bar t_\varepsilon)$. Thus, $\phi$ touches $u$ from above at $(\bar x_\varepsilon, \bar t_\varepsilon)$, and therefore,
    \begin{align*}
        \p_t\varphi(\bar x,\bar t)-F^+(D^2\varphi(\bar x, \bar t)) &=\p_t \phi(\bar x_\varepsilon,\bar t_\varepsilon)-F^+(D^2\phi(\bar x_\varepsilon,\bar t_\varepsilon))
        \leq f^+(\bar x_\varepsilon,\bar t_\varepsilon)\leq f^+(\bar x,\bar t)+\omega_{f^+}(r_\vep),
    \end{align*}
    where $r_\vep = (2\vep \|u\|_{L^\infty(\C_1)})^\frac{1}{2}$. The proof for $(\bar x, \bar t)\in \C_r^-$ is analogous.
    
To check the transmission condition, let $(\bar x, \bar t)=(\bar x',0,\bar t)\in \C_r'$, and assume that
\[
\varphi(y,s)=P(y',s)+p^+y_n^+-p^-y_n^-
\]
touches $u^\varepsilon$ from above at $(\bar x,\bar t)$, where $P$ is a quadratic polynomial in $y'$ and affine in $s$. Then
\[
\phi(y,s) :=P(y'+\bar x'-\bar x_\varepsilon,s+\bar t-\bar t_\varepsilon)+\tfrac{1}{\varepsilon}d_\varepsilon+p^+y_n-p^-y_n^-
\]
touches $u$ from above at $(\bar x_\varepsilon,\bar t_\varepsilon)$. Therefore,
\begin{align*}
    p^+-p^-=(\phi^+_{x_n}-\phi^-_{x_n})(\bar x_\varepsilon,\bar t_\varepsilon)
    \geq g(\bar x_\varepsilon,\bar t_\varepsilon)\geq g(\bar x,\bar t)-\omega_g(r_\vep).
\end{align*}
\end{proof}

\subsection{Half-relaxed limits}

We recall the classical notion of half-relaxed limits and some of its properties that will be useful for future proofs. For more details, see \cite{Crandall-Ishii-Lions}.

\begin{Definition}
    Let $\{u_k\}_{k=1}^\infty$ be a sequence of functions defined on $\overline\C_1$. We define the upper relaxed-limit as
    $$
    {\limsup}^* \, u_k (x,t) := \limsup_{\substack{(y,s)\to (x,t)\\ k\to \infty} } u_k(y,s) \quad \text{ for } (x,t) \in \overline\C_1.
    $$
    Similarly, we define the lower relaxed-limit as
     $$
    {\liminf}_* \, u_k(x,t) := \liminf_{\substack{(y,s)\to (x,t)\\ k\to \infty} } u_k(y,s)  \quad \text{ for } (x,t) \in \overline\C_1.
    $$
\end{Definition}
\begin{Remark}
    It is well-known that ${\limsup}^* \, u_k  \in \usc(\overline\C_1)$ and ${\liminf}_* \, u_k \in \lsc (\overline\C_1)$.
\end{Remark}

The following lemma is the parabolic counterpart of \cite[Proposition~4.3]{Crandall-Ishii-Lions}.
\begin{Lemma} \label{Lm:relaxedlimit}
    Let $\{u_k\}_{k=1}^\infty \subseteq \usc(\overline\C_1)$ and let $u={\limsup}^* \, u_k$.  Assume that a continuous function $\varphi$ touches $u$ from above at $(x_0,t_0) \in \overline\C_1$. 
    Let $\vep >0$.
    Then there are indexes $k_j\to \infty$, points $(x_j,t_j)\to (x_0,t_0)$ in $\overline\C_1$, and continuous functions,
      $$
    \varphi_j(x,t)= \varphi(x,t) - \varphi(x_j,t_j) + u_{k_j}(x_j,t_j) + \vep ( |x-x_0|^2 - |x_j-x_0|^2 + t_j-t ),
    $$
    such that $\varphi_j$ touches $u_{k_j}$ from above at $(x_j,t_j)$, and
    $$
    u(x_0,t_0)=\lim_{j\to \infty} u_{k_j}(x_j,t_j).
    $$
\end{Lemma}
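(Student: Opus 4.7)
The plan is to adapt the classical Crandall--Ishii--Lions perturbation argument to the parabolic setting. Introduce the auxiliary continuous function
\[
\tilde\eta(x,t):=\varphi(x,t)+\vep\bigl(|x-x_0|^2-t\bigr),
\]
so that the claimed test function can be rewritten as $\varphi_j(x,t)=\tilde\eta(x,t)-\tilde\eta(x_j,t_j)+u_{k_j}(x_j,t_j)$. Hence the condition $\varphi_j\geq u_{k_j}$ near $(x_j,t_j)$ with equality there is equivalent to $(x_j,t_j)$ being a local maximum of $u_{k_j}-\tilde\eta$ in some parabolic cylinder $\C_{\rho_j}(x_j,t_j)$. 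The lemma thus reduces to producing indices $k_j\to\infty$ and points $(x_j,t_j)\to(x_0,t_0)$ with $u_{k_j}(x_j,t_j)\to u(x_0,t_0)$ at which this local max property holds.

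Fix $\delta>0$ such that $\C_\delta(x_0,t_0)\subset\overline\C_1$ and $u\leq\varphi$ there. Since $\varphi-\tilde\eta=\vep t-\vep|x-x_0|^2$, a direct check gives $u-\tilde\eta\leq\vep t_0$ on $\overline{\C_\delta(x_0,t_0)}$ with equality only at $(x_0,t_0)$, and the uniform gap $u-\tilde\eta\leq\vep t_0-\vep\delta^2$ on the parabolic boundary $\partial_p\C_\delta(x_0,t_0)$ (treating separately the bottom $\{t=t_0-\delta^2\}$ and the lateral face $\{|x-x_0|=\delta\}$). Since each $u_k$ is upper semicontinuous and $\tilde\eta$ is continuous, the maximum $M_k:=\max_{\overline{\C_\delta(x_0,t_0)}}(u_k-\tilde\eta)$ is attained at some $(y_k,s_k)$. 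The definition of $u={\limsup}^*u_k$ provides $k_j\to\infty$ and $(z_j,w_j)\to(x_0,t_0)$ with $u_{k_j}(z_j,w_j)\to u(x_0,t_0)$, so that $\liminf_j M_{k_j}\geq\vep t_0$.

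Passing to a further subsequence, $(y_{k_j},s_{k_j})\to(y^*,s^*)\in\overline{\C_\delta(x_0,t_0)}$ and $M_{k_j}\to M^*\geq\vep t_0$. The upper relaxed-limit inequality $u(y^*,s^*)\geq\limsup_j u_{k_j}(y_{k_j},s_{k_j})=M^*+\tilde\eta(y^*,s^*)$, combined with the strict-maximum bound of the previous paragraph, forces $(y^*,s^*)=(x_0,t_0)$, $M^*=\vep t_0$, and $u_{k_j}(y_{k_j},s_{k_j})\to u(x_0,t_0)$. Setting $(x_j,t_j):=(y_{k_j},s_{k_j})$, the convergence together with the uniform gap on $\partial_p\C_\delta(x_0,t_0)$ keeps $(x_j,t_j)$ parabolically away from that boundary for $j$ large, so there exist $\rho_j>0$ with $\C_{\rho_j}(x_j,t_j)\subset\overline{\C_\delta(x_0,t_0)}$; on this cylinder the maximality of $(x_j,t_j)$ gives $u_{k_j}-\tilde\eta\leq u_{k_j}(x_j,t_j)-\tilde\eta(x_j,t_j)$, i.e.\ $u_{k_j}\leq\varphi_j$ with equality at $(x_j,t_j)$. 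The only genuinely delicate step is excluding a spurious limit $(y^*,s^*)\neq(x_0,t_0)$; the strict-maximum property produced by the perturbation $\vep(|x-x_0|^2-t)$ is exactly the mechanism that rules this out, and its form is chosen so that it vanishes at $(x_0,t_0)$ while being strictly positive elsewhere in the backward parabolic cylinder.
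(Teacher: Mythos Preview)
Your proof is correct and is precisely the standard Crandall--Ishii--Lions perturbation argument; the paper does not give its own proof of this lemma but simply cites it as the parabolic counterpart of \cite[Proposition~4.3]{Crandall-Ishii-Lions}, so your approach coincides with what is implicitly invoked there. The only small point you glossed over is the boundedness of $M_{k_j}$ needed to extract a convergent subsequence, but this follows immediately from $u\leq\varphi$ on the compact cylinder together with the definition of the upper relaxed limit.
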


\subsection{Comparison Principle and Uniqueness}

The main theorem in this section is the following.

\begin{Theorem}\label{thm:difference}
    Let $f_1, f_2 \in C(\C_1\setminus \C_1') \cap L^\infty(\C_1)$, and let $g_1,g_2 \in C(\C_1')$. Assume that $u \in \usc(\overline\C_1)$ and $v\in \lsc (\overline\C_1)$ are bounded, and satisfy in the viscosity sense,
    $$
    \begin{cases}
        u_t - F^\pm (D^2 u) \leq f_1^\pm & \text{ in } \C_1^\pm,\\
        u_{x_n}^+ - u_{x_n}^- \geq g_1 & \text{ on } \C_1',
    \end{cases}
\quad \text{ and } \quad
    \begin{cases}
        v_t - F^\pm (D^2 v) \geq f_2^\pm & \text{ in } \C_1^\pm,\\
        v_{x_n}^+ - v_{x_n}^- \leq g_2 & \text{ on } \C_1'.
    \end{cases}
    $$
Then $w=u-v$ satisfies in the viscosity sense,
    $$
    \begin{cases}
            w_t - \Mp(D^2w) \leq f_1^\pm - f_2^\pm & \text{ in } \C_1^\pm,\\
            w_{x_n}^+ - w_{x_n}^- \geq g_1-g_2 & \text{ on } \C_1'.
    \end{cases}
    $$
\end{Theorem}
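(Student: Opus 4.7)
The proof plan is to regularize $u$ and $v$ via their tangential $\vep$-envelopes $u^\vep$ and $v_\vep$ from Definition~\ref{Def:jensen}, prove the two desired inequalities for $w^\vep := u^\vep - v_\vep$ with the perturbed data supplied by Proposition~\ref{Prop:jensen_solution}, and then pass to the limit $\vep \to 0$ using Lemma~\ref{Lm:relaxedlimit}. The interior Pucci inequality
\[
\partial_t w^\vep - \Mp(D^2 w^\vep) \leq f_{1,\vep}^\pm - f_{2,\vep}^\pm \quad \text{in } \C_r^\pm
\]
is the routine sub/supersolution-difference statement away from the interface, where the transmission condition plays no role; it is obtained by Ishii's lemma as in the classical fully nonlinear parabolic theory \cite{Crandall-Ishii-Lions, Wang1992}.

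\textbf{Transmission condition for $w^\vep$.} Let $\varphi(x,t) = P(x',t) + p^+ x_n^+ - p^- x_n^-$ touch $w^\vep$ from above at $(x_0, t_0) \in \C_r'$. After adding a strictly tangentially convex perturbation, I may assume the touching is strict. I would double only the tangential variables $(x',t)$, keeping $x_n$ common between the two copies, by considering
\[
\Phi_\alpha(x, t, y', s) := u^\vep(x, t) - v_\vep(y', x_n, s) - \varphi\!\left(\tfrac{x'+y'}{2}, x_n, \tfrac{t+s}{2}\right) - \alpha\bigl(|x' - y'|^2 + (t-s)^2\bigr).
\]
Standard penalization shows the maximum of $\Phi_\alpha$ is attained at points converging to $(x_0, t_0, x_0', t_0)$; strict tangential touching on $\{x_n = 0\}$ together with the matching structure of $\varphi$ in $x_n$ forces the $x_n$-coordinate at the maximum to remain $0$. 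The Lipschitz and semi-concavity of $v_\vep$ in $(x',t)$ from Lemma~\ref{Lm:prop_jensen} then let one extract from $\Phi_\alpha$ admissible test functions for $u^\vep$ and for $v_\vep$ at the respective interface points, with common $x_n$-slopes $p^\pm$ on the two sides of $\{x_n=0\}$. Invoking the transmission conditions from Proposition~\ref{Prop:jensen_solution} (applied to $u^\vep$ from above and to $v_\vep$ from below) and sending $\alpha \to \infty$ yields $p^+ - p^- \geq g_{1,\vep}(x_0, t_0) - g_{2,\vep}(x_0, t_0)$.

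\textbf{Passage to the limit and main difficulty.} Since $u^\vep \downarrow u$ and $v_\vep \uparrow v$ as $\vep \to 0$ by Lemma~\ref{Lm:prop_jensen}, one has $w^\vep \downarrow w$. Lemma~\ref{Lm:relaxedlimit} furnishes, for any test function touching $w$ strictly from above, nearby touching points for $w^\vep$; passing to the limit in the inequalities already established for $w^\vep$, together with the uniform convergence $f_{i,\vep}^\pm \to f_i^\pm$ and $g_{i,\vep} \to g_i$, gives the claimed inequalities for $w$. I expect the genuine difficulty to lie in the transmission step: the test function $\varphi$ is only piecewise $C^2$ across $\{x_n = 0\}$, and $v_\vep$ is merely Lipschitz in $x_n$ and semi-concave in $(x',t)$, so one cannot blindly treat $v_\vep + \varphi$ as a test function for $u^\vep$. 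The saving feature is that the tangential $\vep$-envelopes leave $x_n$ untouched; doubling only in the tangential variables preserves both the kink of $\varphi$ at $x_n = 0$ and the interface structure, which is exactly what allows the transmission conditions of $u^\vep$ and $v_\vep$ to be transferred to $w^\vep$.
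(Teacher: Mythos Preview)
Your overall architecture (pass to $u^\vep$, $v_\vep$, prove the statement for $w^\vep=u^\vep-v_\vep$, then let $\vep\to 0$) matches the paper's, and the interior Pucci inequality and the limiting step via Lemma~\ref{Lm:relaxedlimit} are indeed routine. The gap is in your ``Transmission condition for $w^\vep$'' paragraph.

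The tangential doubling you propose does not produce admissible test functions for the separate transmission conditions of $u^\vep$ and $v_\vep$. At a maximum of $\Phi_\alpha$ with $\hat x_n=0$, freezing $(\hat y',\hat s)$ gives that
\[
(x,t)\ \longmapsto\ v_\vep(\hat y',x_n,\hat s)+\varphi\!\left(\tfrac{x'+\hat y'}{2},x_n,\tfrac{t+\hat s}{2}\right)+\alpha\bigl(|x'-\hat y'|^2+(t-\hat s)^2\bigr)
\]
touches $u^\vep$ from above, but its $x_n$-dependence comes through $v_\vep(\hat y',x_n,\hat s)$, which inherits only the (merely $\lsc$) $x_n$-regularity of $v$; the tangential envelope does nothing in the $x_n$-direction. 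Symmetrically, the candidate test function for $v_\vep$ carries $u^\vep(\hat x',x_n,\hat t)$, which is only $\usc$ in $x_n$. Neither is admissible for condition $(ii)$ of the viscosity definition, so you cannot read off the individual inequalities $q^+-q^-\ge g_{1,\vep}$ and $r^+-r^-\le g_{2,\vep}$ with $q^\pm,r^\pm$ linked to $p^\pm$. The regularity you invoke from Lemma~\ref{Lm:prop_jensen} is purely tangential and does not rescue this. (A secondary issue: nothing in your $\varphi$ forces $\hat x_n=0$; the paper adds a term $-Cx_n^2$ with $C$ large and uses the interior Pucci inequality for $w_\vep$ to rule out interior touching.)

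The paper closes this gap by a different mechanism. After perturbing $\varphi$ and locating a strict touching point $(x_\vep,t_\vep)\in\C_r'$ for $w_\vep$, it forms the parabolic convex envelope (in $x'$) of $\psi=\varphi_\vep-w_\vep-\eta/2$ and uses an ABP-type argument to find, inside the contact set, a point $(y_\vep,t_\vep)$ at which both $u^\vep$ and $v_\vep$ are punctually twice differentiable in $x'$ (this uses Lemma~\ref{Lm:prop_jensen}~$(iii)$ and that the contact set has positive measure). At that point the interface data are $C^{1,1}$ in $x'$ and Lipschitz in $t$, so one solves auxiliary Cauchy--Dirichlet problems $\bar u^\vep$, $\bar v_\vep$ in $\C_\delta^\pm$ with the same lateral and interface data; boundary pointwise $C^{1,\alpha}$ estimates (\cite{Ma-Moreira-Wang}) then supply genuine one-sided $x_n$-derivatives, and Lemma~\ref{Lm:TC_pointwise} upgrades the viscosity transmission inequalities for $\bar u^\vep$, $\bar v_\vep$ to pointwise ones. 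Comparing $\bar u^\vep-\bar v_\vep$ with the test function $\bar\varphi$ yields $p^+-p^-+2\tau\ge (g_1)_\vep-(g_2)_\vep$ at $(y_\vep,t_\vep)$, and one concludes by sending $\tau,\delta,\vep\to 0$. The essential idea you are missing is this manufacture of normal derivatives via auxiliary Dirichlet problems and boundary regularity; without it, the transmission step for $w^\vep$ does not go through.
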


\begin{proof}
    By classical theory, $w$ is a viscosity subsolution to the equation in the interior of $\C_1^\pm$, and thus, we only need to check that the transmission condition is satisfied in the viscosity sense.
    
    Let $(\bar x,\bar t)=(\bar x', 0, \bar t)\in \C_1'$, and assume that $P(x',t)+p^+x_n^+-p^-x_n^-$ touches $w$ by above at $(\bar x,\bar t)$, where $P$ is a quadratic polynomial in $x'$ and affine in $t$, and $p^\pm\in \R$. We need to show that 
    \begin{align}\label{eq:conclusion_comparison}
        p^+-p^-\geq g_1(\bar x,\bar t)-g_2(\bar x,\bar t).
    \end{align}
Fix $\tau>0$ and $C>0$ to be chosen large. Then
\[
\varphi(x,t)=P(x',t)+(p^++\tau)x_n^+-(p^--\tau)x_n^--Cx_n^2
\]
touches $w$ strictly by above at $(\bar x,\bar t)$, possibly in a smaller neighborhood where $\tau |x_n|-Cx_n^2>0$.

For $\varepsilon>0$, consider $w_\varepsilon=u^\varepsilon-v_\varepsilon$, where $u^\varepsilon$ and $v_\varepsilon$ are given in Definition~\ref{Def:jensen}. By Lemma~\ref{Lm:prop_jensen}~$(i)$, we have $\limsup_{\vep\to0} w_\varepsilon= w$. By Lemma~\ref{Lm:relaxedlimit}, up to a subsequence, there exist points $(x_\varepsilon,t_\varepsilon)\to (\bar x,\bar t)$ and continuous functions 
\[
\varphi_\varepsilon(x,t)=\varphi(x,t)-\varphi(x_\varepsilon,t_\varepsilon)+w_\varepsilon(x_\varepsilon,t_\varepsilon)+|x-\bar x|^2-|x_\varepsilon-\bar x|^2+t_\varepsilon-t,
\]
which touch $w_\varepsilon$ strictly from above at $(x_\varepsilon,t_\varepsilon)$ in $\C_\delta(x_\varepsilon,t_\varepsilon)$, with $\delta>0$ small. In particular, there exists $\eta>0$ such that $\varphi_\varepsilon-w_\varepsilon\geq \eta>0$ on $\p_p\C_\delta(x_\varepsilon,t_\varepsilon)$. By Proposition \ref{Prop:jensen_solution},
\begin{align}\label{eq:jen_w}
    \p_tw_\varepsilon-\Mp(D^2w_\varepsilon)\leq (f_1^\pm)_\varepsilon-(f_2^\pm)_\varepsilon \quad\text{ in } \C_\rho^\pm,
\end{align}
in the viscosity sense, for some $0<\rho<1$ such that $\overline\C_\delta(x_\varepsilon,t_\varepsilon)\subset \C_\rho$. Take $C>0$ large so that
\begin{align}\nonumber
        \p_t\varphi_\varepsilon-\Mp(D^2\varphi_\varepsilon)&\geq -1-|\p_tP|-\Lambda|D^2_{x'}P|-2\Lambda+2\lambda(C-1)\\ \label{eq:vareps}
    &\geq \sup_{\C_\rho^\pm}\left\{(f_1^\pm)_\varepsilon-(f_2^\pm)_\varepsilon\right\} \ \text{ in } \C_\rho^\pm.
\end{align}
Since $\varphi_\varepsilon$ touches $w_\varepsilon$ by above at $(x_\varepsilon,t_\varepsilon)$, by \eqref{eq:jen_w} and \eqref{eq:vareps}, we must have $(x_\varepsilon)_n=0$.

Define the function
\begin{align}\label{eq:psi1}
    \psi:=\varphi_\varepsilon-w_\varepsilon-\eta/2.
\end{align}
Then $\psi\geq \eta/2>0$ on $\p_p\C_\delta(x_\varepsilon,t_\varepsilon)$ and $\psi(x_\varepsilon,t_\varepsilon)=-\eta/2<0$ is a  minimum value. Let $C_\psi$ be the convex envelope of $-\psi_-$ in $\C_{2\delta}'(x_\varepsilon,t_\varepsilon)$, where as usual we have extended $-\psi_-\equiv 0$ outside $\overline{\C_\delta'}(x_\varepsilon,t_\varepsilon)$. By Lemma~\ref{Lm:prop_jensen}~$(iii)$, we have $\psi\in C^{1,1}_{x',t}$ by above in $\C_\rho$. Hence, for any $(x_0',t_0)\in \overline{\C_\delta'}(x_\varepsilon,t_\varepsilon)$, there exists a convex paraboloid, with respect to both $x'$ and $t$, that touches $\psi(x',0,t)$ by above at $(x_0',t_0)$. Moreover, $C_\psi\in C^{1,1}(\overline{\C_\delta'}(x_\varepsilon,t_\varepsilon))$ in the parabolic sense.

For $\gamma>0$, let
\[
D_\gamma := \big\{(x',t)\in \overline{\C_\delta'}(x_\varepsilon,t_\varepsilon)\,:\,C_\psi(x',t)=\psi(x',0,t)\text{ and } |\nabla'C_\psi(x',t)|\leq \gamma \big\}.
\]
Since $C_\psi(x_\varepsilon',t_\varepsilon)=\psi(x_\varepsilon',0,t_\varepsilon)$ and $\nabla'C_\psi(x_\varepsilon',t_\varepsilon)=0$, we have $(x_\varepsilon',t_\varepsilon)\in D_\gamma\neq  \emptyset$, for any $\gamma>0$.  
We also know by Remark~\ref{Rmk:ABP_flat} and $\psi_-\not\equiv 0$ that the contact set (without the gradient condition) has positive measure. Since $\nabla'C_\psi$ is continuous, it follows that $|D_\gamma|>0$, for any $\gamma>0$.

Choose $0<\gamma\leq \frac{\eta}{4\delta}$. Then there exists $(y_\varepsilon, t_\varepsilon)\in D_\gamma$ such that both $u^\varepsilon$ and $v_\varepsilon$ are punctually second order differentiable w.r.t.~$x'$ at $(y_\varepsilon,t_\varepsilon)$. This follows from the fact that $u^\varepsilon$ and $-v_\varepsilon$ are semi-convex in $x'$, for every $t$. Furthermore, we have that
\[
\ell(x')=\nabla' C_\psi (y_\varepsilon',t_\varepsilon)\cdot (x'-y_\varepsilon')+\psi(y_\varepsilon,t_\varepsilon)
\]
touches $\psi$ from below at $(y_\varepsilon, t_\varepsilon)$ on $\C_\delta(x_\varepsilon,t_\varepsilon)$. Indeed, by the second definition of the convex envelope in  \eqref{Def:convex_env},
\[
\ell(x')\leq \psi(x',0,t) \quad \text{ for all } (x',t)\in \C'_\delta(x_\varepsilon,t_\varepsilon)
\]
and
\begin{align*}
    \ell(x')\leq|\nabla'C_\psi(y_\varepsilon,t_\varepsilon)|\,|x'-y_\varepsilon'|\leq \frac{\eta}{4\delta}(2\delta)=\frac{\eta}{2}\leq \psi(x,t) \quad\text{ on } \p_p\C_\delta(x_\varepsilon,t_\varepsilon).
\end{align*}
Therefore, $\ell\leq \psi$ on $\p_p\C^\pm_\delta(x_\varepsilon,t_\varepsilon)$. In particular, by \eqref{eq:psi1}, $w_\varepsilon\leq \varphi_\varepsilon-\ell-\eta/2$ on $\p_p\C^\pm_\delta(x_\varepsilon,t_\varepsilon)$. Furthermore, by \eqref{eq:jen_w}, \eqref{eq:vareps}, and the comparison principle, we get
\[
w_\varepsilon\leq \varphi_\varepsilon-\ell-\eta/2 \quad \text{ on } \overline{\C}_\delta(x_\varepsilon,t_\varepsilon).
\]
Define $\bar \varphi :=\varphi_\varepsilon-\ell-\eta/2$. Consider the viscosity solutions $\bar u^\varepsilon$ and $\bar v_\varepsilon$ of the Cauchy-Dirichlet problems
\begin{align*}
    \begin{cases}
        \p_t\bar u^\varepsilon-F^\pm(D^2\bar u^\varepsilon)=(f_1^\pm)_\varepsilon & \text{ in } \C^\pm_\delta(x_\varepsilon,t_\varepsilon),\\
        \bar u^\varepsilon=u^\varepsilon & \text{ on } \p_p\C^\pm_\delta(x_\varepsilon,t_\varepsilon),
    \end{cases}
\end{align*}
and
\begin{align*}
    \begin{cases}
        \p_t\bar v_\varepsilon-F^\pm(D^2\bar v_\varepsilon)=(f_2^\pm)_\varepsilon & \text{ in } \C^\pm_\delta(x_\varepsilon,t_\varepsilon),\\
        \bar v_\varepsilon=v_\varepsilon & \text{ on } \p_p\C^\pm_\delta(x_\varepsilon,t_\varepsilon),
    \end{cases}
\end{align*}
where $(f_i^\pm)_\varepsilon$, $i=1,2$, are defined as in Proposition \ref{Prop:jensen_solution}. By the comparison principle for the Cauchy-Dirichlet problem, we have $\bar u^\varepsilon\geq u^\varepsilon$ and $\bar v_\varepsilon\leq v_\varepsilon$ in $\C_\delta(x_\varepsilon,t_\varepsilon)$, and thus,
\begin{equation}\label{eq:grad_jen}
    \begin{aligned}
        (\bar u^\varepsilon)_{x_n}^+-(\bar u^\varepsilon)_{x_n}^-\geq (g_1)_\varepsilon \quad \text{ and } \quad
        (\bar v_\varepsilon)_{x_n}^+-(\bar v_\varepsilon)_{x_n}^-\leq (g_2)_\varepsilon
        \quad \text{ on } \C'_\delta(x_\varepsilon,t_\varepsilon),
    \end{aligned}
\end{equation}
in the viscosity sense, where $(g_i)_\varepsilon$, $i=1,2$, are defined as in Proposition \ref{Prop:jensen_solution}. Recall that $u^\varepsilon,v_\varepsilon\in C^{1,\alpha}(y_\varepsilon,t_\varepsilon)$ with respect to both $x'$ and $t$. Hence, by pointwise parabolic $C^{1,\alpha}$ estimates, there are linear polynomials $\ell_u^\pm$ and $\ell_v^\pm$ such that for all $r$ small,
\[
\|(\bar u^\varepsilon)^\pm-\ell^\pm_u\|_{L^\infty(\C_r^\pm(y_\varepsilon,t_\varepsilon))} + \|(\bar v_\varepsilon)^\pm-\ell^\pm_v\|_{L^\infty(\C_r^\pm(y_\varepsilon,t_\varepsilon))}\leq C_\varepsilon r^{1+\alpha}.
\]
Denote by $p_u^\pm=\p_{x_n}\ell_u^\pm$ and $p_v^\pm=\p_{x_n}\ell_v^\pm$. By Lemma~\ref{Lm:TC_pointwise} below (rescaled), we conclude that \eqref{eq:grad_jen} holds pointwise at $(y_\varepsilon, t_\varepsilon)$. Namely, 
\begin{align}\label{eq:pointwise_TC}
    p_u^+-p_u^-\geq (g_1)_\varepsilon(y_\varepsilon,t_\varepsilon) \quad \text{ and } \quad p_v^+-p_v^-\leq (g_2)_\varepsilon(y_\varepsilon,t_\varepsilon).
\end{align}
Let $\bar w_\varepsilon=\bar u^\varepsilon-\bar v_\varepsilon$. By the comparison principle, 
\[
\bar \varphi\geq \bar w_\varepsilon \ \text{ in } \C_\delta(x_\varepsilon,t_\varepsilon)\quad \text{ and }\quad \bar \varphi(y_\varepsilon,t_\varepsilon)=\bar w(y_\varepsilon,t_\varepsilon).
\]
Since $\bar w_\varepsilon\in C^{1,\alpha}(y_\varepsilon,t_\varepsilon)$, we have that 
\begin{align*}
    p^++\tau =\bar \varphi_{x_n}^+(y_\varepsilon,t_\varepsilon)\geq (\bar w_\varepsilon)_{x_n}^+(y_\varepsilon,t_\varepsilon)=p_u^+-p_v^+,\\
    p^--\tau =\bar \varphi_{x_n}(y_\varepsilon,t_\varepsilon)\leq (\bar w_\varepsilon)_{x_n}^-(y_\varepsilon,t_\varepsilon)=p_u^--p_v^-.
\end{align*}
Therefore, combining the previous inequalities with \eqref{eq:pointwise_TC}, we get
\begin{align*}
    p^+-p^-+2\tau & \geq  (g_1)_\varepsilon(y_\varepsilon, t_\varepsilon)-(g_2)_\varepsilon(y_\varepsilon, t_\varepsilon)\\
    &=(g_1-g_2)(y_\varepsilon, t_\varepsilon)+\omega_{g_1}\big((2\varepsilon\|u\|_\infty)^\frac{1}{2}\big)-\omega_{g_2}\big((2\varepsilon\|v\|_\infty)^\frac{1}{2}\big).
\end{align*}
Recall that $(y_\varepsilon,t_\vep)\in \C_\delta(x_\varepsilon, t_\vep)$,  and $x_\varepsilon\to \bar x$, $t_\varepsilon\to \bar t$ as $\varepsilon\to 0$. Hence, letting first $\tau \to 0$, then $\delta \to 0$ so that $(y_\varepsilon,t_\varepsilon)\to (x_\varepsilon,t_\varepsilon)$, and finally $\varepsilon\to 0$, we conclude \eqref{eq:conclusion_comparison}. 
\end{proof}

\begin{Lemma}\label{Lm:TC_pointwise}
    Let $f\in C(\C_1\setminus \C_1')\cap L^\infty(\C_1)$ and $u$ be a viscosity solution to 
    \begin{align*}
        \begin{cases}
            \p_tu-F^\pm(D^2u)=f^\pm &\text{ in } \C_1^\pm,\\
            u_{x_n}^+-u_{x_n}^-\geq 0  & \text{ on } \C_1'.
        \end{cases}
    \end{align*}
    If $u$ is twice differentiable at $0$ in the $x'$-direction and Lipschitz in the $t$-direction, then $u$ is differentiable w.r.t. $x$ at the origin and satisfies the transmission condition in the classical sense:
    \[
    u_{x_n}^+(0)-u_{x_n}^-(0)\geq 0.
    \]
\end{Lemma}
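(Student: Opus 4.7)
The plan is to prove the lemma in two steps: (i) show that $u^\pm$ admits a one-sided expansion at the origin which is linear in $x_n$ and quadratic in $(x',t)$, and (ii) use this expansion together with a two-sided test function to derive the pointwise transmission condition from the viscosity one.

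Let $P(x',t):=u(0)+a'\cdot x'+ct+\tfrac12 x'^{T}Mx'$ be the parabolic polynomial provided by the hypothesis, so that $u(x',0,t)-P(x',t)=o(|x'|^2+|t|)$ as $(x',t)\to 0$ along $\{x_n=0\}$. Set $v:=u-P$ (extending $P$ to be independent of $x_n$). Then $v$ solves a uniformly parabolic fully nonlinear equation with bounded right-hand side on each of $\C_1^\pm$, and its trace on $\{x_n=0\}$ vanishes at the origin with order $o(|x'|^2+|t|)$.

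\textbf{Step (i).} A barrier-and-iteration scheme on dyadic parabolic half-cylinders $\C_{2^{-k}}^\pm$, based on the ABP--Krylov--Tso estimate (Theorem~\ref{Thm:ABP}) applied separately on each side, yields the joint one-sided expansion
\[
v^\pm(x,t)=p^\pm x_n+o(|x'|^2+|t|+|x_n|)\quad\text{as }(x,t)\to 0,\ \pm x_n\geq 0,
\]
for some $p^\pm\in\R$. In particular, $u$ is one-sidedly differentiable in $x$ at the origin with $u_{x_n}^\pm(0)=p^\pm$.

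\textbf{Step (ii).} Suppose, for contradiction, $p^+<p^-$. Fix $0<\tau<\tfrac12(p^--p^+)$ and $A>0$ large. Define the two-sided test function
\[
\varphi(x,t):=P(x',t)+(p^\pm\pm\tau)x_n+A(|x'|^2+x_n^2)-At,\quad\pm x_n\geq 0.
\]
Then $\varphi(0)=u(0)$, $\varphi$ is continuous across $\{x_n=0\}$, $\varphi\in C^2(\overline{\C_\delta^+})\cap C^2(\overline{\C_\delta^-})$, and $\varphi_{x_n}^+(0)-\varphi_{x_n}^-(0)=(p^+-p^-)+2\tau<0$. By the expansion of step (i), for $\delta>0$ small and $A$ large, and for every $(x,t)\in\C_\delta$ with $t\leq 0$, one computes
\[
u-\varphi\leq -\tau|x_n|+o(|x'|^2+|t|+|x_n|)-A(|x'|^2+x_n^2+|t|)\leq 0,
\]
so $\varphi$ touches $u$ from above at $0$. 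The viscosity transmission condition $u_{x_n}^+-u_{x_n}^-\geq 0$ applied to $\varphi$ forces $\varphi_{x_n}^+(0)-\varphi_{x_n}^-(0)\geq 0$, contradicting the previous strict inequality. Hence $p^+\geq p^-$, as desired.

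\textbf{Main obstacle.} The core difficulty is step (i): promoting the weak trace expansion on $\{x_n=0\}$ to a joint one-sided expansion in $\overline{\C_r^\pm}$ that is linear in $x_n$ while retaining the $o(|x'|^2+|t|)$ decay tangentially. The barrier must respect the mixed parabolic scaling (linear versus quadratic orders in different directions), and the iterative improvement relies crucially on Theorem~\ref{Thm:ABP}. Once step (i) is in hand, step (ii) is a clean test-function calculation exploiting that upper-touching only concerns $t\leq 0$, where $-At\geq 0$ has the correct sign.
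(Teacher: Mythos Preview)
Your step~(ii) is correct and close in spirit to the paper's argument, but step~(i) is where the proposal has a genuine gap. You assert the mixed-order expansion $v^\pm(x,t)=p^\pm x_n+o(|x'|^2+|t|+|x_n|)$ without proof, and this is strictly stronger than what standard pointwise boundary $C^{1,\alpha}$ regularity gives: the latter yields only $|v^\pm-p^\pm x_n|\leq Cr^{1+\alpha}$ in $\C_r^\pm$, which is $o(|x_n|)$ along the normal but merely $O((|x'|^2+|t|)^{(1+\alpha)/2})$ tangentially, and $(|x'|^2+|t|)^{(1+\alpha)/2}$ is \emph{not} $o(|x'|^2+|t|)$ when $\alpha<1$. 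This matters directly in step~(ii): with only the $C^{1,\alpha}$ error, no fixed choice of $A$ in $A(|x'|^2+x_n^2)-At$ can absorb the tangential remainder near~$0$, so your touching-from-above argument fails. Promoting the $C^{1,\alpha}$ estimate to the finer expansion you claim would itself require a barrier exploiting the $o(|x'|^2+|t|)$ decay of the trace; you identify this as the ``main obstacle'' but do not carry it out.

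The paper sidesteps this obstacle entirely. It first invokes the pointwise boundary $C^{1,\alpha}$ estimate of Ma--Moreira--Wang to obtain $\|u-d^\pm x_n\|_{L^\infty(\C_r^\pm)}\leq Cr^{1+\alpha}$, and then, following \cite{DFS2018}, solves at each scale $r$ an auxiliary Dirichlet problem on $\C_r^+$ with boundary data $2Cr^{\alpha-1}(|x'|^2-t)$ on $\C_r'$. Comparison produces a test function
\[
\varphi(x,t)=2Cr^{\alpha-1}(|x'|^2-t)+(\bar Cr^\alpha+d^+)\,x_n^+-(-\bar Cr^\alpha+d^-)\,x_n^-
\]
touching $u$ from above at $0$; the viscosity transmission condition forces $(d^+-d^-)+2\bar Cr^\alpha\geq 0$, and letting $r\to 0$ finishes. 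The point is that the tangential coefficient $r^{\alpha-1}$ is allowed to blow up as $r\to 0$, which is exactly what compensates for the sublinear-in-$(|x'|^2+|t|)$ tangential error---the very issue a fixed-$A$ test function cannot handle.
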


\begin{proof}
The proof is an adaptation of \cite[Lemma~4.3]{DFS2018}, so we only highlight the differences. 
    From pointwise $C^{1,\alpha}$ estimates at the boundary in \cite[Theorem 3.1]{Ma-Moreira-Wang}, there are affine functions $\ell^\pm(x)$ such that, for all $r$ small,
    \[
    \|u-\ell^\pm\|_{L^\infty(\C_r^\pm)}\leq Cr^{1+\alpha}.
    \]
    Up to subtracting an affine function, we can assume that $\ell^\pm=u_{x_n}^\pm (0) x_n=:d^\pm x_n$.

    Let $w$ be the solution to
    \begin{align*}
        \begin{cases}
            \p_t w-F^+(D^2w)=f^+ &\text{ in } \C_r^+,\\
            w=\varphi_r &\text{ on } \p_p \C_r,
        \end{cases}
    \end{align*}
    where $\varphi_r$ is defined by
    \begin{align*}
        \varphi_r(x,t):=\begin{cases}
            2C(|x|^{1+\alpha}-r^{\alpha-1}t) &\text{ on } \p_L\C_r^+\cap \{x_n>0\},\\
            2Cr^{\alpha-1}(|x'|^2-t) &\text{ on } \C_r',\\
            \psi_r &\text{ on } \p_B\C_r^+,
        \end{cases}
    \end{align*}
    and $\psi_r(x)$ is defined as the solution to 
    \begin{align*}
        \begin{cases}
            \Delta \psi_r=0 & \text{ in } B_r^+,\\
            \psi_r= 2C(|x|^{1+\alpha}+r^{1+\alpha}) & \text{ on } \p B_r^+\cap \{x_n>0\},\\
            \psi_r=2C(r^{\alpha-1}|x'|^2+r^{1+\alpha}) &\text{ on } B_r'.
        \end{cases}
    \end{align*}
    
Arguing similarly to \cite[Lemma~4.3]{DFS2018}, we obtain
\begin{align*}
    w(x,t)\leq 2Cr^{\alpha-1}(|x'|^2-t)+Cr^\alpha x_n \quad \text{ on } \overline{\C_{r/2}^+}.
\end{align*}
From the comparison principle, we see that
\[
\varphi(x,t) :=2Cr^{\alpha-1}(|x'|^2-t)+(\bar C r^\alpha+d^+) x_n^+-(-\bar C r^\alpha+d^-)x_n^-
\]
touches $u$ from above at $0$. Therefore,
\[
(\bar Cr^\alpha+d^+)-(-\bar Cr^\alpha+d^-)\geq 0
\]
holds for all small $r>0$, and thus,
\[
d^+-d^-=  u_{x_n}^+(0)-u_{x_n}^-(0)\geq 0.
\] 
\end{proof}

The comparison principle and uniqueness are straightforward consequences of Theorem~\ref{thm:difference}.

\begin{Theorem}[Comparison Principle] \label{thm:comparison}
Assume that $u \in \usc(\overline\C_1)$ is a bounded viscosity subsolution and $v\in \lsc(\overline\C_1)$ is a bounded viscosity supersolution to \eqref{eq:flatpb}. If $u\leq v$ on $\p_p \C_1$, then
$$
u \leq v \quad \text{ in } \C_1.
 $$
\end{Theorem}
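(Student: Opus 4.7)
The proof should be a direct consequence of the difference theorem combined with the maximum principle from Corollary~\ref{cor:MP}, so the plan is quite short.

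First, I would apply Theorem~\ref{thm:difference} with $f_1^\pm=f_2^\pm=f^\pm$ and $g_1=g_2=g$. Since $u$ is a viscosity subsolution and $v$ is a viscosity supersolution to the same problem \eqref{eq:flatpb}, the theorem yields that $w:=u-v \in \usc(\overline\C_1)$ satisfies, in the viscosity sense,
\[
\begin{cases}
w_t-\Mp(D^2w)\leq 0 & \text{ in } \C_1^\pm,\\
w_{x_n}^+-w_{x_n}^- \geq 0 & \text{ on } \C_1'.
\end{cases}
\]
Equivalently, setting $\widetilde w := -w = v-u\in \lsc(\overline\C_1)$ and using the identity $\Mm(-N) = -\Mp(N)$, we get that $\widetilde w \in \overline{\mathcal{S}}(0)$ in $\C_1^\pm$ together with the reversed transmission inequality $\widetilde w_{x_n}^+-\widetilde w_{x_n}^- \leq 0$ on $\C_1'$.

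Next, I would invoke Corollary~\ref{cor:MP} applied to $\widetilde w$ in the flat setting $\psi\equiv 0$ (which falls within the $C^{1,\alpha}$ class of interfaces covered there). By hypothesis $u\leq v$ on $\p_p\C_1$, so $\widetilde w \geq 0$ on $\p_p\C_1$. The maximum principle then yields $\widetilde w \geq 0$ throughout $\C_1$, which is exactly $u\leq v$ in $\C_1$.

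The only substantive step is the application of Theorem~\ref{thm:difference}, whose proof already encodes the real difficulty (handling the transmission condition via Jensen-type envelopes, pointwise $C^{1,\alpha}$ estimates and the ABP argument). Once that theorem is in place, the present comparison principle reduces to the transmission-problem maximum principle, and no further delicate analysis is required. The only thing worth double-checking is the sign bookkeeping between $\Mp$ and $\Mm$ and the directions of the transmission inequalities when passing from $w$ to $-w$, but this is purely algebraic.
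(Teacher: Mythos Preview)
Your proposal is correct and follows exactly the same approach as the paper: apply Theorem~\ref{thm:difference} with $f_1^\pm=f_2^\pm$ and $g_1=g_2$, then invoke the maximum principle of Corollary~\ref{cor:MP} for $v-u$. Your explicit sign check when passing from $u-v$ to $v-u$ via $\Mm(-N)=-\Mp(N)$ is in fact slightly more careful than the paper's displayed formula, but the argument is identical.
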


\begin{proof}
    Let $w=v-u$. By Theorem~\ref{thm:difference}, we have
    $$
      \begin{cases}
            \partial_t w - \Mp(D^2w) \geq 0 & \text{ in } \C_1^\pm,\\
            w_{x_n}^+ - w_{x_n}^- \leq 0 & \text{ on } \C_1'.
    \end{cases}
    $$
    By the maximum principle (Corollary~\ref{cor:MP}), it follows that $w\geq 0$ in $\C_1$.
\end{proof}

\begin{Corollary}[Uniqueness]\label{Cor:Uniqueness}
    The transmission problem
    \[
    \begin{cases}
    \partial_t u - F^\pm(D^2u)=f^\pm & \text{ in } \C_1^\pm,\\
    u_{x_n}^+-u_{x_n}^- = g & \text{ on } \C_1',\\
    u=\varphi  & \text{ on } \p_p\C_1
    \end{cases}
    \]
    has at most one solution.
\end{Corollary}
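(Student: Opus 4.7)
The plan is to reduce the corollary directly to the comparison principle established in Theorem~\ref{thm:comparison}. Suppose $u_1$ and $u_2$ are two viscosity solutions to the Cauchy-Dirichlet transmission problem with the same data $(f^\pm, g, \varphi)$. By the very definition of viscosity solution, each $u_i$ is simultaneously a bounded viscosity subsolution and a bounded viscosity supersolution to \eqref{eq:flatpb}. Moreover, since both functions attain the boundary value $\varphi$ on $\partial_p \C_1$, we have $u_1 = u_2$, and in particular $u_1 \leq u_2$ and $u_2 \leq u_1$, on the parabolic boundary.

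I would then apply Theorem~\ref{thm:comparison} twice. First, viewing $u_1$ as a subsolution and $u_2$ as a supersolution with $u_1 \leq u_2$ on $\partial_p \C_1$, the comparison principle yields $u_1 \leq u_2$ in $\C_1$. Swapping the roles, $u_2$ is a subsolution and $u_1$ a supersolution with $u_2 \leq u_1$ on $\partial_p \C_1$, so another application gives $u_2 \leq u_1$ in $\C_1$. Combining the two inequalities produces $u_1 \equiv u_2$ in $\C_1$, which is the desired uniqueness.

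There is no essential obstacle here: the corollary is a formal consequence of Theorem~\ref{thm:comparison}, and the only minor point to verify is that the $\usc$/$\lsc$ hypotheses in that theorem are met, which is automatic because viscosity solutions are continuous (hence both upper and lower semicontinuous) up to $\overline{\C_1}$ under the standing assumption that $\varphi$ is continuous on $\partial_p \C_1$.
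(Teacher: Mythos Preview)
Your argument is correct and is exactly the intended one: the paper states that uniqueness is a straightforward consequence of the comparison principle (Theorem~\ref{thm:comparison}), and your two applications of that theorem with the roles of sub- and supersolution swapped is precisely how one makes this explicit.
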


\subsection{Existence}

We prove existence of viscosity solutions to flat interface problems via Perron's method. 

\begin{Theorem}[Existence] \label{thm:existence}
    Let $f \in C(\C_1 \setminus \C_1')\cap L^\infty(\C_1)$, $g\in C(\C_1')$, and $\phi \in C (\p_p \C_1)$. Then there exists a unique viscosity solution $u\in C(\overline\C_1)$ to \eqref{eq:flatpb} such that $u=\phi$ on $\p_p\C_1$.
\end{Theorem}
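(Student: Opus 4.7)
The plan is to apply Perron's method, adapted to accommodate the transmission condition on $\C_1'$. Uniqueness is furnished by Corollary~\ref{Cor:Uniqueness}, so the focus is existence. First, I would construct barriers on $\p_p\C_1$: for each $(x_0, t_0) \in \p_p \C_1$, explicit piecewise-smooth functions of the form $\phi(x_0,t_0) \pm \bigl[K(|x-x_0|^2 + t_0 - t) + c(x_n^+ - x_n^-)\bigr]$, with constants chosen using the $L^\infty$ bounds on $f^\pm$, $g$, and $\phi$, yield classical sub- and supersolutions that attain $\phi(x_0,t_0)$ at $(x_0,t_0)$. Patching these with continuous approximations of $\phi$ produces a global supersolution $\bu$ and subsolution $\uu$ with $\bu = \uu = \phi$ on $\p_p\C_1$. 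Set
\[
\mathcal{F} := \{v \in \usc(\overline{\C_1}) : v \text{ is a bounded viscosity subsolution to \eqref{eq:flatpb} with } v \leq \phi \text{ on } \p_p \C_1\},
\]
and $u(x,t) := \sup\{v(x,t) : v \in \mathcal{F}\}$. The class $\mathcal{F}$ is nonempty since $\uu \in \mathcal{F}$, and the comparison principle (Theorem~\ref{thm:comparison}) gives $v \leq \bu$ in $\overline{\C_1}$ for every $v \in \mathcal{F}$, so $u$ is bounded.

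Next, I would show that $u^*$ is a viscosity subsolution. For interior touch points in $\C_1^\pm$, this is the classical stability argument via Lemma~\ref{Lm:relaxedlimit}. For $(x_0,t_0) \in \C_1'$ with touching test function $\varphi = P(x',t) + p^+ x_n^+ - p^- x_n^-$, Lemma~\ref{Lm:relaxedlimit} supplies $v_k \in \mathcal{F}$, points $(y_k,s_k) \to (x_0,t_0)$, and perturbed test functions $\varphi_k$ touching $v_k$ from above at $(y_k,s_k)$; the perturbation is a smooth quadratic polynomial, so $\varphi_k$ retains the same $x_n$-derivative jump across $\C_1'$ as $\varphi$. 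If $(y_k,s_k) \in \C_1^\pm$ for a subsequence, the equation inequality for $v_k$ passes to the limit via continuity of $F^\pm$ and $f^\pm$, yielding one of the alternatives in the equivalent definition from Lemma~\ref{lem:equivdef}; if $(y_k,s_k) \in \C_1'$, continuity of $g$ together with the matching $x_n$-derivative jump of $\varphi_k$ gives the transmission alternative. Either case is enough to conclude that $u^*$ is a subsolution at $(x_0,t_0)$.

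To establish that $u_*$ is a viscosity supersolution, I would use the classical Perron bump. Suppose $u_*$ fails the supersolution condition at some $(x_0,t_0)$. A small quadratic perturbation of the failing test function yields a strict classical subsolution $\varphi$ in a small cylinder around $(x_0,t_0)$ with $\varphi(x_0,t_0) = u_*(x_0,t_0)$. The interior case is standard; for $(x_0,t_0) \in \C_1'$, $\varphi = P(x',t) + p^+ x_n^+ - p^- x_n^-$ satisfies $p^+ - p^- > g(x_0,t_0)$ and strict equation inequalities on each side, which persist in a neighborhood by continuity of $f^\pm$ and $g$. Then $V_\delta := \varphi + \delta$ is still a strict classical subsolution for small $\delta > 0$, and the modification $\tilde u := \max(u, V_\delta)$ inside the cylinder, $u$ outside, is a bounded subsolution in $\mathcal{F}$ that strictly exceeds $u$ near $(x_0,t_0)$ (by the definition of $u_*$, one finds points where $u$ approaches $\varphi(x_0,t_0)$, so $V_\delta$ overtakes $u$ there), contradicting the maximality of $u$.

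Finally, by the comparison principle (Theorem~\ref{thm:comparison}) applied to $u^*$ and $u_*$, using $u^* = u_* = \phi$ on $\p_p\C_1$ from the barrier construction, we obtain $u^* \leq u_*$ in $\C_1$. Combined with the always-true $u_* \leq u \leq u^*$, this forces $u^* = u_* = u$, so $u \in C(\overline{\C_1})$ is the desired viscosity solution with $u|_{\p_p\C_1} = \phi$. The main obstacle is the subsolution-preservation step on the interface: admissible test functions on $\C_1'$ are only piecewise $C^2$, and the approximating touch points supplied by Lemma~\ref{Lm:relaxedlimit} may lie either on or off the interface, so neither the equation nor the transmission inequality alone can be passed to the limit cleanly; the \emph{or} reformulation from Lemma~\ref{lem:equivdef} is crucial for bridging these two cases in a single limiting statement.
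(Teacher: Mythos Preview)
Your approach is essentially the same as the paper's: Perron's method with the admissible class of subsolutions, stability of $u^*$ as a subsolution via Lemma~\ref{Lm:relaxedlimit} and the equivalent formulation in Lemma~\ref{lem:equivdef}, the bump argument for $u_*$, and the comparison principle to close. Two remarks. First, the paper's barrier construction (Lemma~\ref{lem:barriers}) is cleaner than your pointwise-barrier-and-patch sketch: it solves the global Pucci Dirichlet problems for $\underline\psi,\overline\psi$ with boundary data $\phi\mp\tfrac12\|g\|_{L^\infty(\C_1')}|x_n|$ and then sets $\uu=\underline\psi+\tfrac12\|g\|_{L^\infty}|x_n|$, $\bu=\overline\psi-\tfrac12\|g\|_{L^\infty}|x_n|$; this gives global barriers equal to $\phi$ on $\p_p\C_1$ in one stroke, and the admissible class is defined by $\uu\le v\le\bu$ rather than by a boundary inequality. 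Second, in your barrier formula the term $c(x_n^+-x_n^-)$ equals $cx_n$ under the paper's convention $v_-=-\min\{0,v\}$, which is smooth across $\C_1'$ and contributes no jump to the normal derivative; you presumably intend $c|x_n|=c(x_n^++x_n^-)$, whose derivative jump is $2c$ and can absorb $\|g\|_{L^\infty}$.
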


First, we need to construct suitable barriers.

\begin{Lemma}[Perron barriers]\label{lem:barriers}
    Let $\phi \in C(\p_p \C_1)$. There exist functions $\uu, \bu \in C^2(\C_1\setminus \C_1')\cap C(\overline\C_1)$ such that $\uu$ is a viscosity subsolution, $\bu$ is a viscosity supersolution to \eqref{eq:flatpb}, and such that
    $$
\begin{cases}
    \uu \leq \bu & \text{ in } \C_1,\\
    \uu = \bu = \phi & \text{ on } \p_p \C_1.
\end{cases}
    $$
\end{Lemma}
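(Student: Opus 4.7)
The plan is to construct $\bu$ and $\uu$ explicitly in the form $\bu = \Phi_0 + KW$ and $\uu = \Phi_0 - KW$, where $\Phi_0$ is a smooth base function that matches $\phi$ on $\p_p\C_1$ and carries the prescribed transmission jump, and $W$ is a nonnegative barrier that vanishes on $\p_p\C_1$ and is smooth across $\C_1'$ (so it contributes nothing to the jump).

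For the base function, I would first extend $g$ to some $\tilde g \in C(\overline{\C_1}) \cap C^2(\C_1 \setminus \C_1')$ with $\tilde g|_{\C_1'} = g$, by Tietze's theorem followed by mollification in each open half-cylinder. Let $\Phi \in C(\overline{\C_1}) \cap C^2(\C_1 \setminus \C_1')$ be any $C^2$ extension of the continuous datum $\big(\phi - \tfrac{\tilde g}{2}|x_n|\big)\big|_{\p_p\C_1}$, and set
\[
\Phi_0(x,t) := \Phi(x,t) + \tfrac{\tilde g(x,t)}{2}\,|x_n|.
\]
Since $|x_n|$ has normal-derivative jump equal to $2$ across $\C_1'$, a direct computation shows $\Phi_0 \in C(\overline{\C_1}) \cap C^2(\C_1 \setminus \C_1')$, $\Phi_0 = \phi$ on $\p_p\C_1$, and $(\Phi_0)_{x_n}^+ - (\Phi_0)_{x_n}^- = \tilde g = g$ on $\C_1'$.

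For the barrier, $W$ would be taken as the viscosity solution of the Pucci Dirichlet problem
\[
\p_t W - \Mp(D^2 W) = 1 \text{ in } \C_1, \qquad W = 0 \text{ on } \p_p\C_1,
\]
whose existence, interior $C^{2,\alpha}$ regularity, and continuity up to $\p_p\C_1$ follow from the classical Perron method for fully nonlinear parabolic equations without any transmission structure, along the lines of \cite{Wang1992}; the comparison principle then gives $W \geq 0$. Choosing $K \geq M := \max_\pm \|\p_t\Phi_0 - F^\pm(D^2\Phi_0)\|_{L^\infty(\C_1^\pm)} + \|f\|_{L^\infty(\C_1)}$, uniform ellipticity yields
\begin{align*}
\p_t\bu - F^\pm(D^2\bu) &\geq \big(\p_t\Phi_0 - F^\pm(D^2\Phi_0)\big) + K\big(\p_t W - \Mp(D^2 W)\big) \\
&\geq -M + K \geq f^\pm \quad \text{in } \C_1^\pm,
\end{align*}
and a symmetric computation gives $\p_t\uu - F^\pm(D^2\uu) \leq f^\pm$. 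The identities $\uu = \bu = \phi$ on $\p_p\C_1$ and the ordering $\uu \leq \bu$ follow from $W \geq 0$ and $W|_{\p_p\C_1} = 0$; the transmission conditions hold because $W$ is $C^2$ across $\C_1'$, so the jump of $\bu$ and $\uu$ along $\C_1'$ coincides with that of $\Phi_0$ and thus equals $g$.

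The main obstacle I foresee is the continuity of the Pucci barrier $W$ up to the corner $\p B_1 \times \{-1\}$ of $\p_p\C_1$: any smooth function vanishing on $\p_p\C_1$ must also have vanishing gradient and Hessian at this corner (both factors of a product representation vanish simultaneously), so no explicit polynomial barrier can provide a uniform strict Pucci inequality there. One is therefore forced to invoke the Dirichlet solvability of the Pucci equation in $\C_1$, whose boundary continuity at the corner rests on the standard construction of local barriers at each point of $\p_p\C_1$, which is classical but needs some care in the parabolic setting.
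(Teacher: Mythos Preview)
Your decomposition $\bu = \Phi_0 + KW$, $\uu = \Phi_0 - KW$ has a genuine gap that you did not flag: the constant
\[
M = \max_\pm \|\p_t\Phi_0 - F^\pm(D^2\Phi_0)\|_{L^\infty(\C_1^\pm)} + \|f\|_{L^\infty(\C_1)}
\]
need not be finite. Your $\Phi_0$ is built from Tietze extension plus mollification, so it lies only in $C(\overline\C_1)\cap C^2(\C_1\setminus\C_1')$; since $\phi$ is merely continuous, $D^2\Phi_0$ and $\p_t\Phi_0$ will in general blow up as $(x,t)\to\p_p\C_1$, and then no fixed $K$ can force the pointwise inequality $\p_t\bu - F^\pm(D^2\bu)\geq f^\pm$ throughout $\C_1^\pm$. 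The same objection applies to the derivatives of your extension $\tilde g$. (The corner-continuity issue you do raise for $W$ is real but secondary; it arises in any approach and is handled by standard local barriers.)

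The paper circumvents this by making the base function itself a Pucci solution rather than an arbitrary extension. One solves, in the \emph{whole} cylinder $\C_1$, problems of the form $\p_t\psi-\mathcal{M}^\mp(D^2\psi)=\mp\|f\|_{L^\infty}$ with continuous boundary data $\phi\mp\tfrac12\|g\|_{L^\infty}|x_n|$, obtaining $\underline\psi,\overline\psi\in C^2(\C_1)\cap C(\overline\C_1)$, and then sets $\uu=\underline\psi+\tfrac12\|g\|_{L^\infty}|x_n|$ and $\bu=\overline\psi-\tfrac12\|g\|_{L^\infty}|x_n|$. Because $|x_n|$ is linear on each half and $\underline\psi,\overline\psi$ satisfy an exact Pucci equation, the interior differential inequalities hold at every point by construction, with no appeal to bounded derivatives of any extension. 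Replacing $g$ by the constant $\|g\|_{L^\infty}$ turns the transmission condition into an inequality (the jump of $\uu$ is $+\|g\|_{L^\infty}\geq g$, that of $\bu$ is $-\|g\|_{L^\infty}\leq g$), which is all that is required of sub/supersolutions and removes the need for your $\tilde g$ altogether. The ordering $\uu\leq\bu$ then follows from the comparison principle (Theorem~\ref{thm:comparison}) rather than from nonnegativity of an auxiliary barrier.
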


\begin{proof}
   Let $\underline\psi, \overline \psi \in C^2(\C_1)\cap C(\overline\C_1)$ be the unique solutions to 
    $$
\begin{cases}
    \p_t \underline \psi -\Mm(D^2\underline \psi) = \|f\|_{L^\infty(\C_1)} & \text { in } \C_1,\\
    \underline \psi = \phi - \tfrac{1}{2}\|g\|_{L^\infty(\C_1')}|x_n| & \text{ on } \p_p \C_1,
\end{cases}
$$
and
$$
\begin{cases}
    \p_t \overline \psi -\Mp(D^2\overline \psi) = -\|f\|_{L^\infty(\C_1)} & \text { in } \C_1,\\
    \overline \psi = \phi + \tfrac{1}{2}\|g\|_{L^\infty(\C_1')}|x_n| & \text{ on } \p_p \C_1.
\end{cases}
    $$
    Define the functions $\uu, \bu \in C^2(\C_1\setminus \C_1')\cap C(\overline\C_1)$ as
    $$
    \uu = \underline \psi + \tfrac{1}{2}\|g\|_{L^\infty(\C_1')}|x_n|
    \quad \text{ and } \quad
       \bu = \overline \psi - \tfrac{1}{2}\|g\|_{L^\infty(\C_1')}|x_n|.
    $$
    Then $\uu=\bu=\phi$ on $\p_p \C_1$. By construction, $\uu$ is a subsolution and $\bu$ is a supersolution to \eqref{eq:flatpb}. By the comparison principle (Theorem~\ref{thm:comparison}), we have $\uu\leq \bu$ in $\C_1$.
\end{proof}

We define the set of admissible subsolutions as
$$
\mathcal{A} := \big\{ v \in \usc(\overline\C_1) : \underline u \leq v \leq \overline u \text{ and $v$ is a viscosity subsolution to \eqref{eq:flatpb}} \big\},
$$
where $\underline u$ and $\overline u$ are given in Lemma~\ref{lem:barriers}. Note that $\underline u \in \mathcal{A}$, so $\mathcal{A}\neq \emptyset$. Set
\begin{equation} \label{eq:supsub}
u(x,t) := \sup \big \{v(x,t) : v \in \A \big\}.
\end{equation}
Let $u_*\in \lsc(\overline\C_1)$ and $u^* \in \usc(\overline\C_1)$ denote the lower and upper semicontinuous envelopes of $u$ on $\overline{\C_1}$, respectively. By definition,
$
u_*\leq u \leq u^*.
$

\begin{Lemma} \label{lem:HRLsub}
    If $\{v_k\}_{k=1}^\infty\subset \A$, then $v :=\limsup^* v_k \in \A$.
\end{Lemma}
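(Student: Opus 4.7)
The plan is as follows. The upper relaxed limit automatically yields $v \in \usc(\overline{\C_1})$. Since each $v_k \in [\underline u, \overline u]$ and both barriers are continuous, taking upper relaxed limits preserves these bounds, so $\underline u \leq v \leq \overline u$. The main work is to verify that $v$ is a viscosity subsolution to \eqref{eq:flatpb}, which I plan to carry out by combining Lemma~\ref{Lm:relaxedlimit} with the equivalent subsolution criterion from Lemma~\ref{lem:equivdef}.

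For the interior property, suppose $\varphi \in C^2(\C_\delta(x_0, t_0))$ touches $v$ from above at $(x_0, t_0) \in \C_1^\pm$. Lemma~\ref{Lm:relaxedlimit} supplies indices $k_j \to \infty$, contact points $(x_j, t_j) \to (x_0, t_0)$, and perturbed functions $\varphi_j$ touching $v_{k_j}$ from above at $(x_j, t_j)$. Since $\C_1^\pm$ is open, $(x_j, t_j) \in \C_1^\pm$ for large $j$; the interior PDE for $v_{k_j}$ at $(x_j, t_j)$ then passes to the limit as $j \to \infty$ (using continuity of $F^\pm$ and of $f^\pm$ in $\C_1^\pm$) and as $\varepsilon \to 0$, producing the required inequality for $\varphi$ at $(x_0, t_0)$.

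For the transmission property, let $\varphi(x,t) = P(x', t) + p^+ x_n^+ - p^- x_n^-$ touch $v$ from above at $(x_0, t_0) \in \C_1'$. Adding $\eta(|x-x_0|^2 - (t - t_0))$ for small $\eta > 0$ produces a strict touch while preserving $p^\pm$ (the extra $\eta x_n^2$ term is $C^2$ on each side and has vanishing $x_n$-derivative at $x_n = 0$). Applying Lemma~\ref{Lm:relaxedlimit} to this strict touch gives $(x_j, t_j)$, $\varphi_j$ as above; the perturbation preserves the piecewise $C^2$ structure of $\varphi$ and leaves the slopes $p^\pm$ unchanged at any point of $\C_1'$. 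Passing to a subsequence, two cases arise. If $(x_j, t_j) \in \C_1'$ for all $j$, then the transmission condition for the subsolution $v_{k_j}$ applied to $\varphi_j$ yields $p^+ - p^- \geq g(x_j, t_j)$, which passes to $p^+ - p^- \geq g(x_0, t_0)$ by continuity of $g$. If instead $(x_j, t_j) \in \C_1^\pm$ for all $j$ (say the $+$ side), then $\varphi_j$ is $C^2$ in a neighborhood of $(x_j, t_j)$ and the interior PDE for $v_{k_j}$ passes to $\partial_t \varphi^+(x_0,t_0) - F^+(D^2 \varphi^+(x_0,t_0)) \leq f^+(x_0,t_0)$ after sending $j \to \infty$, $\varepsilon \to 0$, and $\eta \to 0$. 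Either way, the alternative in Lemma~\ref{lem:equivdef} is verified for $v$ at $(x_0, t_0)$, completing the proof that $v \in \mathcal A$.

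The main obstacle is the second case of the transmission analysis: when the approximating contact points remain strictly off $\C_1'$, one cannot directly recover the transmission condition for $v$, only a one-sided PDE inequality. The reformulation from Lemma~\ref{lem:equivdef}—which permits either the PDE (on one side) or the transmission condition—is exactly what resolves this difficulty, and explains why that equivalent criterion is invoked here rather than trying to force $(x_j,t_j)$ onto $\C_1'$ by an auxiliary perturbation. A secondary technicality is that the passage to the limit for $f^\pm$ relies on the continuous extension of $f^\pm$ from each side up to $\C_1'$, consistent with the functional setup in which $f^\pm(x_0,t_0)$ is interpreted as such a one-sided limit.
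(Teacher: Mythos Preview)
Your proposal is correct and follows essentially the same strategy as the paper: apply Lemma~\ref{Lm:relaxedlimit} to obtain approximating contact points $(x_j,t_j)$, split into the cases $(x_j,t_j)\in\C_1^\pm$ versus $(x_j,t_j)\in\C_1'$, and invoke the equivalent criterion of Lemma~\ref{lem:equivdef} at interface points. The only cosmetic differences are that you specialize to test functions of the form $P(x',t)+p^+x_n^+-p^-x_n^-$ (which is justified by the Remark following the definition) and add an auxiliary $\eta$-perturbation that is not strictly needed, since the $\varepsilon$-term in Lemma~\ref{Lm:relaxedlimit} already provides a strict touch; neither affects correctness.
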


\begin{proof}
    Let $\{v_k\}_{k=1}^\infty\subset \A$ and set $v :=\limsup^* v_k$. 
    Since $\uu \leq v_k \leq \bu$, for all $k\geq1,$ then the same holds for $v$.
    We need to show that $v$ is a viscosity subsolution to \eqref{eq:flatpb}. Indeed, let $(x_0,t_0) \in \C_1$, and assume that $\varphi$ is a test function touching $v$ from above at $(x_0,t_0)$. Let $\vep>0$. By Lemma~\ref{Lm:relaxedlimit}, there are indexes $k_j\to \infty$, points $(x_j,t_j)\to (x_0,t_0)$ in $\overline{\C_1}$, and test functions,
    $$
      \varphi_j(x,t)= \varphi(x,t) - \varphi(x_j,t_j) + u_{k_j}(x_j,t_j) + \vep( |x-x_0|^2 - |x_j-x_0|^2 + t_j-t),
      $$
    such that $\varphi_j$ touches $v_{k_j}$ from above at $(x_j,t_j)$, and
    $$
    v(x_0,t_0)=\lim_{j\to \infty} u_{k_j}(x_j,t_j).
    $$
    If $(x_0,t_0) \in \C_1^\pm$, then for $j$ sufficiently large, we may assume that $(x_j,t_j)\in \C_1^\pm$. Since $v_{k_j}\in \A$, and $\varphi_j$ touches $v_{k_j}$ from above at $(x_j, t_j)$, then 
  $$
     \p_t \varphi (x_j, t_j) - \vep - F^\pm( D^2\varphi(x_j,t_j) + 2\vep I) \leq f^\pm(x_j,t_j).
  $$
  Letting $j\to \infty$ and $\vep \to 0$, by continuity of $F^\pm$, $\p_t \varphi$, $D^2\varphi$, and $f^\pm$, we get
   $$\partial_t \varphi(x_0,t_0) - F^\pm (D^2(x_0,t_0))\leq f^\pm(x_0,t_0).$$

  If $(x_0,t_0)\in \C_1'$, then either there exists $j_0\geq 1$ such that for all $j\geq j_0$ we have $(x_j,t_j)\in \C_1^\pm$, or for all $j_0\geq 1$, there exists $j\geq j_0$ such that $(x_j,t_j)\in \C_1'$. In the first case, we get the above inequality, following the previous argument. In the second case, we have 
  $$
  \partial_{x_n}\varphi_j^+(x_j,t_j) - \partial_{x_n}\varphi_j^-(x_j,t_j)\geq g_j(x_j,t_j).
  $$
  Letting $j\to \infty$, we get the desired estimate. 

By Lemma~\ref{lem:equivdef}, it follows that $v$ is a viscosity subsolution to \eqref{eq:flatpb} in $\C_1$.
\end{proof}

We divide the proof of Theorem~\ref{thm:existence} into two steps.

\begin{Lemma}[Step 1] \label{lem:step1}
    $u^*$ is a viscosity subsolution to \eqref{eq:flatpb}. 
\end{Lemma}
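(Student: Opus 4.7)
The plan is to apply Perron's method in the viscosity framework, adapted to the flat transmission setting. The key tools are Lemma~\ref{Lm:relaxedlimit} and the equivalent formulation of subsolutions at the interface given in Lemma~\ref{lem:equivdef}.

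First, I would reduce to strict touching. Given a test function $\varphi$ touching $u^*$ from above at some $(x_0,t_0)\in\C_1$, replace $\varphi$ by $\varphi + \eta(|x-x_0|^2+(t_0-t))$ for a small parameter $\eta>0$; this preserves both the interior $C^2$ structure and the piecewise $C^2$ structure at the interface, and makes the touching strict. The subsolution inequalities are then recovered at the end by letting $\eta\to0$ and using continuity of $F^\pm$.

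Next, I would produce an approximating sequence of admissible subsolutions. By the definition of $u^*$ as the upper semicontinuous envelope of the supremum in \eqref{eq:supsub}, there exist points $(x_k,t_k)\to(x_0,t_0)$ and $v_k\in\A$ with $v_k(x_k,t_k)\to u^*(x_0,t_0)=\varphi(x_0,t_0)$. Let $(y_k,s_k)$ be a maximizer of $v_k-\varphi$ over a fixed small closed parabolic neighborhood of $(x_0,t_0)$. Using $v_k\le u\le u^*$ together with the strict touching, one verifies that $(y_k,s_k)\to(x_0,t_0)$ and $v_k(y_k,s_k)\to u^*(x_0,t_0)$, so that the shifted function $\varphi+(v_k(y_k,s_k)-\varphi(y_k,s_k))$ touches $v_k$ from above at $(y_k,s_k)$. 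Since $v_k\in\A$ is a viscosity subsolution to \eqref{eq:flatpb}, the relevant inequality at $(y_k,s_k)$ can then be passed to the limit by continuity of $F^\pm$, $f^\pm$, and $g$.

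Finally, there are two cases. If $(x_0,t_0)\in\C_1^\pm$, then for $k$ large $(y_k,s_k)\in\C_1^\pm$ and $\varphi$ is $C^2$ nearby, so the interior equation is inherited at $(x_0,t_0)$. If $(x_0,t_0)\in\C_1'$, one faces a dichotomy: up to a subsequence, either $(y_k,s_k)\in\C_1^\pm$ for all $k$, yielding the interior inequality on one side in the limit, or $(y_k,s_k)\in\C_1'$ for all $k$, yielding the transmission inequality $\varphi_{x_n}^+-\varphi_{x_n}^-\ge g$ in the limit. By Lemma~\ref{lem:equivdef}, either alternative suffices to verify the subsolution condition at $(x_0,t_0)$. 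The main obstacle is precisely this dichotomy at the interface: test functions are only piecewise smooth across $\C_1'$ and the limit points need not land on $\C_1'$; the equivalent formulation in Lemma~\ref{lem:equivdef}, which asks for just one of the two inequalities, is exactly what bridges this gap.
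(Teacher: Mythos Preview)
Your argument is correct and follows essentially the same route as the paper. The paper factors the proof through Lemma~\ref{lem:HRLsub} (closure of $\A$ under half-relaxed limits): it shows that $\varphi$ also touches $\limsup^* v_k$ from above at $(x_0,t_0)$ and then invokes that lemma, which in turn relies on Lemma~\ref{Lm:relaxedlimit}. You instead unroll both steps into a single direct argument, extracting the maximizers $(y_k,s_k)$ of $v_k-\varphi$ yourself and passing to the limit; the inequality $v_k\le u\le u^*$ that you use plays exactly the role of the paper's observation $u^*\ge\limsup^* v_k$ with equality at $(x_0,t_0)$. The dichotomy at the interface and the appeal to Lemma~\ref{lem:equivdef} are identical in both.
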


\begin{proof}
    Let $(x_0,t_0)\in \C_1$. Assume that $\varphi$ is a test function touching $u^*$ from above at $(x_0,t_0)$. By \eqref{eq:supsub} and the definition of $u^*$, there exist points $(x_k,t_k) \in \C_1$ and functions $\{v_k\}_{k=1}^\infty\subset \A$ such that $(x_k,t_k)\to (x_0, t_0)$ and
    $$
    u^*(x_0,t_0) = \limsup_{k\to \infty} v_k(x_k,t_k) \leq {\limsup}^*\, v_k (x_0,t_0).
    $$
    Furthermore, for any $(x,t) \in \C_1$ and $\{v_j\}_{j=1}^\infty \subset \A$, it holds that
    $$
    u^*(x,t) \geq \Big( \sup_{j\geq 1} v_j\Big)^*(x,t)\geq  {\limsup}^*\, v_j(x,t).
    $$
    Hence, $\varphi$ touches $\limsup^* v_k$ from above that $(x_0,t_0)$. By Lemma~\ref{lem:HRLsub}, it follows that $u^*$ is a subsolution to \eqref{eq:flatpb}.
\end{proof}

\begin{Lemma}[Step 2] \label{lem:step2}
    $u_*$ is a viscosity supersolution to \eqref{eq:flatpb}.
\end{Lemma}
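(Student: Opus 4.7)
I plan to argue by contradiction, adapting the classical Perron strategy to the transmission setting. Suppose $u_*$ is not a viscosity supersolution; then there exist $(x_0,t_0) \in \C_1$ and a test function $\varphi$ touching $u_*$ from below at $(x_0,t_0)$ such that one of the following holds: (A) $(x_0,t_0) \in \C_1^{\pm}$ and $\partial_t \varphi(x_0,t_0) - F^{\pm}(D^2 \varphi(x_0,t_0)) < f^{\pm}(x_0,t_0)$; or (B) $(x_0,t_0) \in \C_1'$, $\varphi = P(x',t) + p^+ x_n^+ - p^- x_n^-$ with $P$ quadratic in $x'$ and affine in $t$, and $p^+ - p^- > g(x_0,t_0)$. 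The goal is to produce an admissible subsolution $v \in \A$ strictly exceeding $u$ at some point, contradicting $u = \sup \A$.

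First I reduce to $u_*(x_0,t_0) < \bu(x_0,t_0)$: if equality held, $\varphi$ would also touch $\bu$ from below at $(x_0,t_0)$, and the classical supersolution property of $\bu$ (both in the equation in $\C_1 \setminus \C_1'$ and in the transmission jump $\bu_{x_n}^+ - \bu_{x_n}^- = -\|g\|_{L^\infty(\C_1')}$) would contradict (A) or (B) --- with a minor perturbation of the upper barrier needed in the borderline case where $g$ attains the value $-\|g\|_{L^\infty(\C_1')}$ at $(x_0,t_0)$.

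I then construct a perturbation $\tilde\varphi$ of $\varphi$ which is a viscosity subsolution to \eqref{eq:flatpb} in a small cylinder $\C_r(x_0,t_0)$, satisfies $\tilde\varphi \leq u^*$ on $\partial_p\C_r(x_0,t_0)$, and has $\tilde\varphi(x_0,t_0) > u_*(x_0,t_0)$. In Case A, the choice $\tilde\varphi := \varphi - c_0(|x-x_0|^2 + (t-t_0)^2) + \eta$ suffices for small $c_0,\eta > 0$: the quadratic gives quantitative strict touching $u_* - \tilde\varphi + \eta \geq c_0(|x-x_0|^2 + (t-t_0)^2)$ on $\partial_p \C_r$, and the strict equation inequality at $(x_0,t_0)$ persists in a neighborhood by continuity of $F^\pm$ and $f^\pm$. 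In Case B, one must additionally add a spatial correction of the form $M(|x-x_0|^2 - K(t_0-t))$ with $K \in (0,2\lambda)$ and $M$ large enough (depending on $\|P\|_{C^2}$, $\|f^\pm\|_\infty$, $\lambda$) to turn $\tilde\varphi$ into a classical strict subsolution of $\partial_t u - F^\pm(D^2 u) = f^\pm$ on $\C_1^\pm\cap\C_r$; the radial quadratic has zero $x_n$-derivative at $x_n=0$, so the strict transmission inequality $p^+ - p^- > g$ survives in $\C_1' \cap \C_r$ by continuity of $g$. The pasted function
\[
v(x,t) := \begin{cases} \max\{u^*(x,t),\, \tilde\varphi(x,t)\} & \text{in } \C_r(x_0,t_0), \\ u^*(x,t) & \text{otherwise}, \end{cases}
\]
is then an admissible subsolution: the boundary agreement $\tilde\varphi \leq u^*$ on $\partial_p \C_r$ makes $v \in \usc(\overline{\C_1})$; the maximum of two viscosity subsolutions to \eqref{eq:flatpb} is again a viscosity subsolution (the transmission condition passes to the max because any test function touching $\max\{u^*,\tilde\varphi\}$ from above at a point of $\C_1'$ also touches from above the larger of the two); and $v \leq \bu$ by the reduction step, provided $\eta, c_0$ are chosen small enough. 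Since $\tilde\varphi(x_0,t_0) > u_*(x_0,t_0)$, the definition of the lower semicontinuous envelope produces a sequence $(x_k,t_k) \to (x_0,t_0)$ with $u(x_k,t_k) \to u_*(x_0,t_0)$, so $v(x_k,t_k) \geq \tilde\varphi(x_k,t_k) > u(x_k,t_k)$ for large $k$, contradicting $u = \sup\A$.

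The main obstacle is the construction of $\tilde\varphi$ in Case B, where $\varphi$ is designed as a test function for the transmission condition only and need not satisfy the equation on either side of $\C_1'$. The corrective term must simultaneously (i) contribute a positive-definite Hessian large enough to enforce the classical strict subsolution of the equation in $\C_1^\pm$, (ii) have vanishing $x_n$-derivative at the flat interface so as to preserve the strict transmission jump $p^+-p^- > g$, and (iii) remain small on $\partial_p \C_r$ so that the pasting $\tilde\varphi \leq u^*$ on $\partial_p \C_r$ remains valid. The delicate scale balancing among $c_0$, $M$, $K$, $\eta$, and $r$ --- which is forced by the gap between $\lambda$ and $\Lambda$ appearing in the ellipticity estimates for $F^\pm$ --- is the technical heart of the argument.
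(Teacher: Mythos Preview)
Your approach matches the paper's: both argue by contradiction via the classical Perron bump, showing that a failed supersolution test at $(x_0,t_0)$ allows one to build a subsolution in $\A$ that strictly exceeds $u$. The reduction $u_*(x_0,t_0)<\bu(x_0,t_0)$ and the endgame with the sequence $(x_k,t_k)\to(x_0,t_0)$ are identical to the paper's. (Your worry about a ``borderline case'' is unnecessary: if $\varphi$ touches the classical barrier $\bu$ from below at a point of $\C_1'$, then $p^+\le \bu_{x_n}^+$ and $p^-\ge \bu_{x_n}^-$ force $p^+-p^-\le -\|g\|_{L^\infty}\le g(x_0,t_0)$, a clean contradiction without any perturbation.)

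There is, however, a genuine gap in your Case~B construction. You assert that after adding both $-c_0(|x-x_0|^2+(t-t_0)^2)$ and $M(|x-x_0|^2-K(t_0-t))$ one still has the ``quantitative strict touching'' $u_*-\tilde\varphi+\eta\ge c_0(|x-x_0|^2+(t-t_0)^2)$ on $\partial_p\C_r$. This is false once the $M$-term is included: since $M$ must be large (independent of $r$) to force the equation, the net coefficient of $|x-x_0|^2$ in $\tilde\varphi-\varphi$ is $M-c_0>0$, and at the lateral boundary point $(x_0+re_n,t_0)$ one gets $\tilde\varphi-\varphi\approx (M-c_0)r^2+\eta>0$, with no compensating lower bound on $u_*-\varphi$ there. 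So the pasting condition $\tilde\varphi\le u^*$ on $\partial_p\C_r$ is not guaranteed, and the ``delicate scale balancing'' you flag does not close as written.

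A clean repair is to exploit the strict transmission gap itself to manufacture a quantitative $x_n$-gap before adding any convex correction: set $N=\tfrac14(p^+-p^--g(x_0,t_0))>0$ and replace $\varphi$ by $\varphi-N|x_n|$, which still touches $u_*$ from below at $(x_0,t_0)$, still satisfies $p^+-p^--2N>g(x_0,t_0)$, and now obeys $u_*-\varphi\ge N|x_n|$. Then take $\tilde\varphi=\varphi-N|x_n|+Cx_n^2-\gamma(|x'-x_0'|^2+t_0-t)+\eta$ and work in a box thin in $x_n$, say $\{|x'-x_0'|<r,\ |x_n|<\rho,\ t_0-r^2<t\le t_0\}$. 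The term $Cx_n^2$ (with $C$ large, depending only on $P,f^\pm,\lambda,\Lambda$) forces the equation on both sides while leaving the transmission jump intact; on the faces $|x_n|=\rho$ the required inequality $C\rho^2+\eta\le N\rho$ holds for $\rho$ small, and on the remaining faces the $-\gamma(\cdots)$ term dominates. With this modification the argument goes through.
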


\begin{proof}
    Assume, for the sake of contradiction, that $u_*$ is not a viscosity supersolution to \eqref{eq:flatpb}. Then there are $(x_0,t_0)\in \C_1$ and a test function $\varphi$ that touches $u_*$ (strictly) from below at $(x_0,t_0)$,~but 
    \begin{enumerate}
        \item[$(i)$] $\partial_t \varphi - F^\pm (D^2 \varphi(x_0,t_0))< f^\pm(x_0,t_0)$ if $(x_0,t_0) \in \C_1^\pm$;
        \item[$(ii)$] $\varphi_{x_n}^+(x_0,t_0) - \varphi_{x_n}^-(x_0,t_0) > g(x_0,t_0)$ if $(x_0,t_0)\in \C_1'$.
    \end{enumerate}
Hence, by continuity, the function 
$$
\varphi_{\delta,\gamma}(x,t) := \varphi(x,t)+\delta - \gamma \big( |x-x_0|^2 + t_0-t\big)
$$
is a classical strict subsolution in $\C_r(x_0,t_0)$ for sufficiently small $r, \delta, \gamma>0$. 
Moreover, we have $\varphi_{\delta,\gamma}(x_0,t_0) > u_*(x_0,t_0)$ and $\varphi_{\delta,\gamma} < u_*\leq u$ on $\partial_p \C_r(x_0,t_0)$, choosing $\delta\leq \gamma r^2/2$. Consider
$$
v:= \begin{cases}
\max \{u, \varphi_{\delta,\gamma}\} & \text{ in } \C_r(x_0,t_0),\\
u & \text{ otherwise }.
\end{cases}
$$
Then $v \in C(\overline\C_1)$ is a viscosity subsolution to \eqref{eq:flatpb} and $\uu \leq u \leq v$ in $\C_1$.
Moreover, note that $u_*(x_0,t_0) < \bu (x_0,t_0)$. Otherwise, $\varphi$ touches $\bu$ from below at $(x_0,t_0)$, and since $\bu$ is a viscosity supersolution to \eqref{eq:flatpb}, then $(i)$ and $(ii)$ cannot hold. Hence, $\varphi_{\delta,\gamma}(x_0,t_0)=u_*(x_0,t_0)+\delta < \bu(x_0,t_0)$ for $\delta$ small,
and by continuity, $v\leq \bu$ in $\C_1$, taking $r$ smaller if necessary. Therefore, $v\in \A$.

On the other hand, there are points $(x_k,t_k)\to (x_0,t_0)$ such that $u(x_k,t_k) \to u_*(x_0,t_0)$. Hence,
$$
\lim_{k\to \infty} ( v(x_k,t_k) - u(x_k,t_k) ) \geq  \varphi_{\delta,\gamma}(x_0, t_0) - u_*(x_0,t_0)= \delta>0,
$$
which contradicts that $v \leq u$ since $v\in \A$.

\end{proof}

\begin{customproof}{Theorem \ref{thm:existence}}
    By Lemmas~\ref{lem:step1} and \ref{lem:step2}, we have $u^*$ is a viscosity subsolution, and $u_*$ is a viscosity supersolution to \eqref{eq:flatpb} in $\C_1$. Moreover, $u_*=u^*$ on $\partial_p \C_1$. By the comparison principle (Theorem~\ref{thm:comparison}), it follows that $u^*\leq u_*$ in $\C_1$. Therefore, $u_*=u=u^*$, and $u \in C(\overline\C_1)$ is a viscosity solution to \eqref{eq:flatpb}. The uniqueness follows from Corollary~\ref{Cor:Uniqueness}.
\end{customproof}

\subsection{$\Coa$ estimates up to the flat interface}

In future proofs, we will need the following $C^{1,\alpha}$ estimate for the homogeneous problem. 

\begin{Proposition}\label{prop:regflat}.
Suppose that $v$ is a bounded viscosity solution to
\[
\begin{cases}
\p_t v-F^\pm(D^2 v) = 0 & \text{in } \C_1^\pm, \\
v_{x_n}^+ - v_{x_n}^- = 0 & \text{on } \C_1'.
\end{cases}
\]
Then $v \in C^{1,\bar\alpha}(\overline\C_{1/2})$, with
$$
\|v\|_{C^{1,\bar\alpha}(\overline\C_{1/2})}\leq C \|v\|_{L^\infty(\C_1)},
$$
where $0<\bar\alpha<1$ and $C > 0$ depend only on $n$, $\lambda$, and $\Lambda$.
\end{Proposition}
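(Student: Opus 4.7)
The natural approach is the compactness plus improvement-of-flatness scheme for fully nonlinear parabolic problems. Interior $C^{1,\bar\alpha}$ estimates on each half-cylinder $\C_{1/2}^{\pm}$ are available from Wang's parabolic theory \cite{Wang1992,Wang1992II}, so the essential task is to produce a pointwise $C^{1,\bar\alpha}$ estimate at every point of the flat interface $\C_{1/2}'$. Since $g=0$, any affine function $\ell(x)=A\cdot x+b$ automatically satisfies the transmission condition $\ell_{x_n}^{+}=\ell_{x_n}^{-}$, so the flatness iteration will use plain affine approximants.

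\textbf{Key lemma (improvement of flatness).} The crucial step is the following: there exist universal constants $0<\bar\alpha<1$, $0<\rho<1/2$, and $C_0>0$ such that any bounded viscosity solution $v$ of the homogeneous flat transmission problem with $\|v\|_{L^\infty(\C_1)}\leq 1$ admits some affine $\ell$ with $|A|+|b|\leq C_0$ and $\sup_{\C_\rho}|v-\ell|\leq \rho^{1+\bar\alpha}$. I would prove this by contradiction: if it fails, produce sequences of admissible operators $F^{\pm}_k$ and solutions $v_k$ with $\|v_k\|_\infty\leq 1$ for which no such $\ell$ exists. Theorem~\ref{thm:intholder} gives equicontinuity of $\{v_k\}$ in $C^{0,\alpha_1}(\overline{\C_{1/2}})$, and after extracting subsequences $v_k\to v_\infty$ uniformly and $F^{\pm}_k\to F^{\pm}_\infty$ locally uniformly. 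Stability of viscosity solutions under half-relaxed limits, which follows from Lemmas~\ref{Lm:prop_jensen}--\ref{Lm:relaxedlimit} exactly as in the existence proof of Theorem~\ref{thm:existence}, shows that $v_\infty$ is a bounded viscosity solution of the limit homogeneous flat transmission problem with operators $F^{\pm}_\infty$.

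\textbf{Regularity of the limit.} The decisive point is to argue that $v_\infty$ is $C^{1,\gamma}$ at the origin for some universal $\gamma$ strictly larger than $\bar\alpha$, which makes the Taylor polynomial of $v_\infty$ at the origin a valid affine approximant of order $\rho^{1+\bar\alpha}$ for $\rho$ small and contradicts the hypothesis. Exploiting translation invariance of the homogeneous problem in the tangential variables $(x',t)$, the difference quotients of $v_\infty$ in these directions remain bounded viscosity solutions of the same class (with zero source and the same homogeneous transmission condition). Iterated application of Theorem~\ref{thm:intholder} to these quotients gives successively improved tangential H\"older control; together with Wang's one-sided interior $C^{2,\alpha}$ estimates on $\C_{1/2}^{\pm}$, this yields tangential differentiability of $v_\infty$ up to the flat interface, and the normal derivative is then recovered from the equations together with the matched condition $\p_{x_n}v_\infty^{+}=\p_{x_n}v_\infty^{-}$.

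\textbf{Iteration and main obstacle.} With the key lemma in hand, rescaling and iterating at dyadic scales around each point $(x_0,t_0)\in \C_{1/2}'$ produces a Cauchy sequence of affine approximants converging to a unique tangent plane with geometric error decay $\rho^{k(1+\bar\alpha)}$; this is the desired pointwise $C^{1,\bar\alpha}$ estimate on the interface. A standard Campanato-type argument combining this pointwise interface estimate with Wang's interior $C^{1,\bar\alpha}$ estimate in $\C_{1/2}^{\pm}$ delivers the global bound $\|v\|_{C^{1,\bar\alpha}(\overline{\C_{1/2}})}\leq C\|v\|_{L^{\infty}(\C_1)}$. The hard part is extracting sufficient regularity of the limit $v_\infty$ at the interface: because $F^{+}_\infty$ and $F^{-}_\infty$ may genuinely differ, there is no reflection reducing the limit problem to a single-equation problem, and the argument must carefully combine tangential translation invariance with the one-sided Wang estimates to propagate regularity across the flat interface through the linear matched transmission condition.
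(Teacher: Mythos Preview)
Your proposal is not wrong in spirit, but it takes a long detour around the paper's direct argument, and the detour is essentially circular. The paper's proof is very short: by Theorem~\ref{thm:difference}, for any tangential unit vector $e'$ and any $h>0$ small, the tangential differences $v(x+he',t)-v(x,t)$ and $v(x,t)-v(x,t-h)$ belong to $\mathcal{S}(0)$ in $\C_{1-h}^{\pm}$. The Caffarelli--Cabr\'e bootstrap (\cite[Section~5.3]{Caffarelli-Cabre}) then gives $v|_{\{x_n=0\}}\in C^{1,\bar\alpha}(\overline{\C_{3/4}'})$ directly, and one-sided boundary $C^{1,\bar\alpha}$ estimates for the Dirichlet problem on each half finish the job. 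The transmission condition is then classical, so the two one-sided estimates glue.

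Notice that the argument you outline for the \emph{limit} function $v_\infty$ in your ``Regularity of the limit'' paragraph---tangential translation invariance, difference quotients in the Pucci class, iterated H\"older, one-sided estimates---is precisely this direct argument, and it applies verbatim to $v$ itself, since $v$ already solves the homogeneous flat problem. There is nothing gained by passing to a compactness limit first: the improvement-of-flatness wrapper is redundant because its ``hard step'' is already the entire proof. Two smaller issues: Wang's interior $C^{2,\alpha}$ estimates require concavity or convexity of the operator and are not available here (nor are they needed---$C^{1,\alpha}$ boundary regularity for the Dirichlet problem suffices); and there is no need to vary the operators $F^\pm_k$ in the compactness argument, since the direct argument gives constants depending only on $n$, $\lambda$, $\Lambda$ anyway.
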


\begin{proof}
By Theorem~\ref{thm:difference}, for any $h>0$ small and any unit vector $e'\in \R^n$ in the $x'$-direction, 
$$
 v(x+he', t) - v(x,t) \in \mathcal{S}(0) \quad \text{ and } \quad  v(x,t) - v(x,t-h) \in \mathcal{S}(0) \quad \text{ in } \C_{1-h}^\pm .
$$
By the arguments in \cite[Section~5.3]{Caffarelli-Cabre}, we get that $v|_{x_n=0} \in C^{1,\bar \alpha}(\overline\C_{3/4})$ for some $0< \bar \alpha<1$, depending only on $n$, $\lambda$, and $\Lambda$. Hence, by the boundary regularity estimates for the Dirichlet problem, it follows that
$$
\|v^\pm\|_{C^{1,\bar \alpha}(\overline\C_{1/2})}\leq C \|v\|_{L^\infty(\C_1)},
$$
where $C>0$ depends only on $n$, $\lambda$, and $\Lambda$.
In particular, $v$ satisfies the transmission condition in the classical sense, and thus, the estimate is propagated across $\C_1'$. 
\end{proof}

\section{Stability results}\label{Sec:Stab}

Let $|a|<1/2$, and define the hyperplane $T_a:=\C_1\cap \,\{x_n=a\}$.
 Call $\C_{1,a}^\pm :=\C_1\cap \{\pm \,(x_n-a)>0\}$, the domains above and below $T_a$.
Consider the flat interface problem
\begin{align}\label{eq:flat_Ta}
    \begin{cases}
        \partial_t v-F^\pm(D^2v)=0 & \mbox{ in } \C_{1,a}^\pm,\\
        v_{x_n}^+-v_{x_n}^-= g_0& \mbox{ on } T_a,
    \end{cases}
\end{align}
for some $g_0 \in \R$. Note that if $v|_{\partial_p \C_1}=:\varphi \in C(\partial_p \C_1)$, then by uniqueness (Corollary~\ref{Cor:Uniqueness}),
\begin{equation} \label{eq:repv}
    v= w+ \frac{g_0}{2} |x_n-a| \quad \text{ on } \overline\C_1,
\end{equation}
where $w \in C(\overline\C_1)$ is the viscosity solution to
\begin{equation} \label{pb:w}
\begin{cases}
\partial_t w - F^\pm(D^2 w) = 0 & \text{ in } \C_{1,a}^\pm,\\
w_{x_n}^+= w_{x_n}^- & \text { on } T_a,\\
w=\varphi - \frac{g_0}{2} |x_n-a| & \text{ on } \partial_p \C_1.
\end{cases}
\end{equation}

The $C^{1,\alpha}$ interior estimate for $w$ follows from Proposition~\ref{prop:regflat} and a standard covering argument.

\begin{Lemma} \label{lem:regflatw}
Let $|a|<1/2$, $g_0 \in \R$, and  $0<\eta<1$. Let $0<\bar \alpha<1$ be as in Proposition~\ref{prop:regflat}.
 Then the viscosity solution to \eqref{pb:w} satisfies $w\in C^{1,\bar\alpha}(\overline\C_{1-\eta})$, with
 $$
\eta \|\nabla w\|_{L^\infty(\C_{1-\eta})} + \eta^{1+\bar\alpha}[\nabla w]_{C^{0,\bar\alpha}(\overline\C_{1-\eta})} \leq C \big(\|\varphi\|_{L^\infty(\partial_p\C_1)}+ |g_0| \big),
$$
where $C>0$ depends only on $n$, $\lambda$, and $\Lambda$.
\end{Lemma}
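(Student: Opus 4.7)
The plan is to combine an $L^{\infty}$ bound on $w$ with a covering argument that, around each point of $\C_{1-\eta}$, reduces to a rescaled application of Proposition~\ref{prop:regflat}, with standard interior Pucci regularity handling the points far from the interface.

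First, I would bound $\|w\|_{L^{\infty}(\C_1)}$ via the comparison principle. Since $|a|<1/2$, we have $|x_n-a|\le 3/2$ in $\overline{\C_1}$, so the boundary datum of $w$ in \eqref{pb:w} satisfies $\|\varphi-\tfrac{g_0}{2}|x_n-a|\|_{L^{\infty}(\p_p\C_1)}\le \|\varphi\|_{L^{\infty}(\p_p\C_1)}+\tfrac{3}{4}|g_0|$. Since constants are trivially sub- and supersolutions of \eqref{pb:w}, Theorem~\ref{thm:comparison} gives
\[
\|w\|_{L^{\infty}(\C_1)}\le C\bigl(\|\varphi\|_{L^{\infty}(\p_p\C_1)}+|g_0|\bigr).
\]

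Next, I would fix $(x_0,t_0)\in \C_{1-\eta}$ and set $r=\eta$; a direct check using $(1-\eta)^2+\eta^2<1$ shows $\C_r(x_0,t_0)\subset \C_1$. Define the rescaled function $\hat w(y,s):=w(x_0+ry,\,t_0+r^2 s)$ for $(y,s)\in \C_1$. Then $\hat w$ solves in the viscosity sense the homogeneous flat interface problem with interface $\{y_n=b\}$, where $b=(a-(x_0)_n)/r$, for the operators $\widetilde F^{\pm}(M):=r^2 F^{\pm}(M/r^2)$, which have the same $(\lambda,\Lambda)$-ellipticity. I would then split according to the distance to the interface. If $|b|\ge 1/2$, the cylinder $\C_{1/4}(0)$ lies entirely on one side of the interface of $\hat w$, which solves a single-phase fully nonlinear parabolic equation there; the classical interior $C^{1,\bar\alpha}$ regularity (e.g., \cite{Wang1992,Wang1992II}) applies directly. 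If $|b|<1/2$, I would translate via $\check w(z,s):=\hat w(z+b e_n,s)$, producing a solution of the homogeneous flat interface problem with interface exactly $\{z_n=0\}$ on a translated cylinder $B_1(-be_n)\times(-1,0]$ that contains $\C_{1/2}$; after a further parabolic rescaling by $1/2$ to reach $\C_1$, Proposition~\ref{prop:regflat} applies and yields the interior $C^{1,\bar\alpha}$ estimate. Undoing the translations and rescalings in either case produces the pointwise bound
\[
r\,|\nabla w(x_0,t_0)|+r^{1+\bar\alpha}[\nabla w]_{C^{0,\bar\alpha}(\C_{r/C}(x_0,t_0))}\le C\|\hat w\|_{L^{\infty}(\C_1)}\le C\|w\|_{L^{\infty}(\C_1)}.
\]

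Taking the supremum over $(x_0,t_0)\in \C_{1-\eta}$, combining the local H\"older seminorms via a standard covering of $\C_{1-\eta}$ by cylinders $\C_{r/C}(x_0,t_0)$, and invoking Step~1, yields the claimed estimate. The main technical point is the flattening step in Case~2: a translation in $x_n$ is needed to bring the interface of $\hat w$ to $\{y_n=0\}$, and this requires $|b|<1/2$, which is exactly the dichotomy used to split the analysis; once this geometric setup is made, the rest is routine rescaling.
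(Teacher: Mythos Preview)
Your proposal is correct and follows exactly the approach the paper indicates: the paper's proof is the single sentence ``follows from Proposition~\ref{prop:regflat} and a standard covering argument,'' and you have spelled out precisely that covering argument together with the preliminary $L^\infty$ bound. One small bookkeeping point: with the threshold $|b|<1/2$ in your Case~2, after translating and rescaling you obtain $\check w\in C^{1,\bar\alpha}(\overline{\C_{1/4}})$, but the point you care about corresponds to $z=-be_n$, which lies in $\C_{1/4}$ only when $|b|<1/4$; replacing the dichotomy by $|b|\ge 1/4$ versus $|b|<1/4$ (and using $\C_{1/4}$ in place of $\C_{1/2}$ for the single-phase interior estimate) fixes this with no change to the structure of the argument.
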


In the next lemma, we call $w_0$ the viscosity solution to \eqref{pb:w} with $a=0$.

\begin{Lemma} \label{lem:stabilityw}
Let $|a|<1/2$ and $g_0 \in \R$. Assume that $\varphi \in C^\gamma(\partial_p \C_1)$, for some $0<\gamma<1$.
Let $w \in C(\overline\C_1)$ be the viscosity solution to \eqref{pb:w}.
For all $\vep>0$, there is $\delta>0$ such that if $|a|\leq \delta$,
 then
$$
\|w -w_0\|_{L^\infty(\C_1)} + \|\nabla w - \nabla w_0\|_{C^{0,\alpha}(\overline \C_{3/4})}\leq \vep,
$$
for any $0<\alpha<\bar\alpha$.
\end{Lemma}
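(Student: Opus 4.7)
My plan is to argue by compactness and contradiction. Suppose the conclusion fails: then there exist $\vep_0>0$ and a sequence $a_k\to 0$ such that the corresponding solutions $w_k$ to \eqref{pb:w} with $a=a_k$ satisfy
\[
\|w_k-w_0\|_{L^\infty(\C_1)}+\|\nabla w_k-\nabla w_0\|_{C^{0,\alpha}(\overline{\C_{3/4}})}>\vep_0.
\]
Since $|a_k|<1/2$, the boundary data $\varphi_k:=\varphi-\tfrac{g_0}{2}|x_n-a_k|$ is uniformly bounded in $C^\gamma(\partial_p\C_1)$, and the transmission jump for $w_k$ is identically zero (in particular, compactly supported in $T_{a_k}\cap\C_{1-2\rho}$ for any $0<\rho<1/4$). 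Thus Proposition~\ref{Pro:Holder_global} yields a uniform global bound $\|w_k\|_{C^{0,\beta}(\overline{\C_1})}\le C$, with $C$ independent of $k$.

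Next I would establish uniform $C^{1,\bar\alpha}$ estimates through the moving interface. Translating $\tilde w_k(x',x_n,t):=w_k(x',x_n+a_k,t)$, the function $\tilde w_k$ solves the homogeneous flat transmission problem at $\{x_n=0\}$ on the translated cylinder, which contains $\C_{7/8}$ for $k$ large. A covering argument combined with the rescaled version of Proposition~\ref{prop:regflat} (applied in small subcylinders centered at points of $\overline{\C_{13/16}}$) then yields $\|\tilde w_k\|_{C^{1,\bar\alpha}(\overline{\C_{13/16}})}\le C$, and translating back gives a uniform bound $\|w_k\|_{C^{1,\bar\alpha}(\overline{\C_{3/4}})}\le C$. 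By Arzelà–Ascoli, up to a subsequence, $w_k\to \bar w$ uniformly on $\overline{\C_1}$ and in $C^1(\overline{\C_{3/4}})$. Standard viscosity stability transfers the interior equations $\p_t \bar w-F^\pm(D^2\bar w)=0$ to the limit in $\C_1^\pm$, while the $C^1$ convergence ensures that $\bar w\in C^1(\overline{\C_{3/4}})$ satisfies $\bar w_{x_n}^+=\bar w_{x_n}^-$ across $\{x_n=0\}$. Since $\varphi_k\to \varphi-\tfrac{g_0}{2}|x_n|$ uniformly on $\partial_p\C_1$, Corollary~\ref{Cor:Uniqueness} forces $\bar w=w_0$, so the full sequence converges.

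Finally, to upgrade to $C^{1,\alpha}$ convergence on $\overline{\C_{3/4}}$, I would apply the standard parabolic interpolation inequality to $u_k:=w_k-w_0$: since $\|u_k\|_{C^{1,\bar\alpha}(\overline{\C_{3/4}})}\le C$ and $\|u_k\|_{L^\infty(\C_{3/4})}\to 0$, interpolation with exponent $\theta=(1+\alpha)/(1+\bar\alpha)<1$ gives $\|u_k\|_{C^{1,\alpha}(\overline{\C_{3/4}})}\to 0$, contradicting the assumed failure. The principal obstacle is the uniform $C^{1,\bar\alpha}$ estimate up to and across the moving interface $T_{a_k}$: Proposition~\ref{prop:regflat} does not apply directly because the interface is not at $\{x_n=0\}$, and this is resolved by the translation trick combined with a covering argument. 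A secondary subtlety is verifying that the transmission condition persists in the limit, which is handled cleanly because uniform $C^1$ convergence on $\overline{\C_{3/4}}$ makes the jump identity a pointwise statement that passes to the limit.
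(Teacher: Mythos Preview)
Your proposal is correct and follows essentially the same compactness-and-contradiction scheme as the paper: uniform global $C^{0,\beta}$ bounds from Proposition~\ref{Pro:Holder_global}, uniform interior $C^{1,\bar\alpha}$ bounds (the paper packages your translation-plus-covering step as Lemma~\ref{lem:regflatw}), Arzel\`a--Ascoli, and identification of the limit via uniqueness. The only cosmetic difference is that the paper extracts $C^{1,\alpha}$ convergence directly from the compact embedding $C^{1,\bar\alpha}\hookrightarrow C^{1,\alpha}$, whereas you phrase the same step as an interpolation inequality.
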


\begin{proof}
Assume for the sake of contradiction that there are $\vep_0>0$ and functions $w_k$, for each $k\geq 1$, such that $w_k$ is the viscosity solution to \eqref{pb:w}, with $|a_k|\leq1/k$, 
but there is $0<\alpha<\bar\alpha$ such that
\begin{equation} \label{eq:contra}
\|w_k -w_0\|_{L^\infty(\C_1)} + \|\nabla w_k - \nabla w_0\|_{C^{0,\alpha}(\overline\C_{3/4})}> \vep_0.
\end{equation}
Note that for each $k\geq 1$, we have
$$
w_k = w_0 + \tfrac{g_0}{2} |x_n|  - \tfrac{g_0}{2} |x_n-a_k| \quad \text{ on } \partial_p \C_1.
$$
Let $\phi_k(x) := |x_n|  -|x_n-a_k|$. Then for any $x\in \overline\C_1$,
$$
|\phi_k(x)| \leq \big|  |x_n|  -|x_n-a_k| \big |\leq |a_k|\leq 1/k,
$$
and for any $x, y \in \overline\C_1$,
$$
|\phi_k(x)- \phi_k(y)| \leq   \big|  |x_n|  -  |y_n| \big| +\big| |x_n-a_k| - |y_n-a_k| \big | \big | \leq 2|x_n-y_n|. 
$$
It follows that
$$
\|w_k - w_0\|_{L^\infty(\partial_p \C_1)} = \tfrac{|g_0|}{2} \|\phi_k\|_{L^\infty(\partial_p \C_1)} \leq \tfrac{|g_0|}{2k} \to 0 \quad \text{ as } k \to \infty.
$$
By Proposition~\ref{Pro:Holder_global}, for $0<\beta \leq \min\{\alpha_1,\gamma/2\}$, we have
$$
\|w_k\|_{C^{0,\beta}(\overline\C_1)}  \leq C \|w_k\|_{C^{0,\gamma}(\partial_p \C_1)} \leq C ( \|\varphi\|_{C^{0,\gamma}(\partial_p \C_1)}+|g_0|),
$$ 
where $C>0$ depends only on $n$, $\lambda$, and $\Lambda$.
Moreover, by Lemma~\ref{lem:regflatw}, 
$$
\|w_k\|_{C^{1,\bar \alpha}(\overline\C_{3/4})} \leq C \|w_k\|_{L^\infty(\C_1)} \leq C (\|\varphi\|_{L^\infty(\partial_p \C_1)}+|g_0|).
$$
Hence, by Arzel\`{a}-Ascoli theorem, for any $0<\alpha<\bar \alpha$, up to a subsequence,
 \begin{align*}
 &w_k \to w_\infty \text{ uniformly on } \overline\C_1, \\
 & w_k \to w_\infty  \text{ in } C^{1,\alpha}(\overline\C_{3/4}).
 \end{align*}
By classical arguments, $w_\infty$ is a viscosity solution to \eqref{pb:w}, with $a=0$, such that $w_\infty = w_0$ on $\partial_p \C_1$. By uniqueness, it follows that $w_\infty=w_0$, which contradicts \eqref{eq:contra} for $k$ large.
\end{proof}

\begin{Lemma}[Stability for flat problems] \label{lem:stabilityflat}
    Let $0<\alpha<\bar \alpha$, $g_0\in \R$, and $\varphi \in C^\gamma(\partial_p \C_1)$, for some $0<\gamma<1$.
   Given $\vep>0$, let $0<\delta<1/2$ be as in Lemma~\ref{lem:stabilityw}. Let
     $\overline{v}, \underline v$ be viscosity solutions to \eqref{eq:flat_Ta} with $a=\delta$ and $a=-\delta$, respectively. Assume that $\underline v=\overline v= \varphi$ on $\p_p\C_1$. Then
     $$
\|\overline v -\underline v \|_{L^\infty(\C_1)} \leq 2\vep+|g_0|\delta.
$$
Furthermore, let $D_{\delta,\eta}:=\C_{1-\eta}\cap \{|x_n|<\delta\}$. For any $0<\eta<1$, it holds that
$$
\|\nabla' \overline v - \nabla' \underline v \|_{C^{0,\alpha}(\overline\C_{1-\eta})} +  \|(\underline v^+)_{x_n}-(\overline v^-)_{x_n}-g_0\|_{L^\infty(D_{\delta,\eta})} \leq \eta^{-(1+\alpha)} \vep.
 $$
\end{Lemma}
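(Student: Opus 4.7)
The plan is to use the representation \eqref{eq:repv} to split $\overline v$ and $\underline v$ into the homogeneous-transmission solutions $\overline w,\underline w$ of \eqref{pb:w} with $a=\pm\delta$ plus the explicit singular correctors $\tfrac{g_0}{2}|x_n\mp\delta|$, and then reduce every claimed estimate to a stability statement between $\overline w,\underline w$ and the reference solution $w_0$ (with $a=0$). Writing
\[
\overline v = \overline w + \tfrac{g_0}{2}|x_n-\delta|, \qquad \underline v = \underline w + \tfrac{g_0}{2}|x_n+\delta|,
\]
Lemma~\ref{lem:stabilityw} applied to each of $\overline w, \underline w$ yields $\|\overline w-w_0\|_{L^\infty(\C_1)}+\|\nabla\overline w-\nabla w_0\|_{C^{0,\alpha}(\overline\C_{3/4})}\leq \vep$, and likewise for $\underline w$.

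For the $L^\infty$ bound, subtracting the decompositions gives $\overline v-\underline v = (\overline w-w_0)-(\underline w-w_0)+\tfrac{g_0}{2}(|x_n-\delta|-|x_n+\delta|)$. The first two terms are bounded in $L^\infty(\C_1)$ by $2\vep$ via Lemma~\ref{lem:stabilityw}, and the pointwise inequality $\big||x_n-\delta|-|x_n+\delta|\big|\leq 2\delta$ controls the last term by $|g_0|\delta$. For the remaining two estimates, I reduce to $\overline w-\underline w$: since $\nabla'|x_n\pm\delta|\equiv 0$, one has $\nabla'\overline v-\nabla'\underline v = \nabla'\overline w-\nabla'\underline w$, and on the strip $D_{\delta,\eta}$, where $|x_n+\delta|=x_n+\delta$ and $|x_n-\delta|=\delta-x_n$, a direct differentiation (using that $\overline w$ and $\underline w$ are $C^1$ across their respective interfaces because they satisfy the homogeneous transmission condition in \eqref{pb:w}) gives
\[
(\underline v^+)_{x_n}-(\overline v^-)_{x_n}-g_0 = (\underline w)_{x_n}-(\overline w)_{x_n}.
\]
By the triangle inequality, both quantities are controlled by $\|\overline w-w_0\|_{C^{1,\alpha}(\overline\C_{1-\eta})}+\|\underline w-w_0\|_{C^{1,\alpha}(\overline\C_{1-\eta})}$.

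The main obstacle is promoting the $\vep$-closeness from $\overline\C_{3/4}$, where Lemma~\ref{lem:stabilityw} is stated, to the larger set $\overline\C_{1-\eta}$ with the sharp constant $\eta^{-(1+\alpha)}$. The plan is to interpolate the global smallness $\|\overline w-w_0\|_{L^\infty(\C_1)}\leq\vep$ against the interior bound $\|\overline w-w_0\|_{C^{1,\bar\alpha}(\overline\C_{1-\eta/2})}\leq C\eta^{-(1+\bar\alpha)}(\|\varphi\|_{L^\infty(\partial_p\C_1)}+|g_0|)$ coming from Lemma~\ref{lem:regflatw}, via the (parabolic) Hölder interpolation
\[
\|u\|_{C^{1,\alpha}(\overline\C_{1-\eta})}\leq C\|u\|_{L^\infty(\C_1)}^{(\bar\alpha-\alpha)/(1+\bar\alpha)}\|u\|_{C^{1,\bar\alpha}(\overline\C_{1-\eta/2})}^{(1+\alpha)/(1+\bar\alpha)}.
\]
The exponents of $\eta$ combine to exactly $\eta^{-(1+\alpha)}$, while the residual factor $\vep^{(\bar\alpha-\alpha)/(1+\bar\alpha)}$ is absorbed by applying Lemma~\ref{lem:stabilityw} at the start with a suitably reduced $\vep'$ chosen so that $(\vep')^{(\bar\alpha-\alpha)/(1+\bar\alpha)}$ times the data-dependent constants is at most $\vep$. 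Tracking this scaling carefully so that the final bound is clean and matches the stated exponent is the only genuinely delicate step.
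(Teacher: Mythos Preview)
Your decomposition via \eqref{eq:repv}, the $L^\infty$ estimate, and the reduction of both gradient quantities to $\overline w-\underline w$ are exactly what the paper does. The only substantive difference is how the estimate is extended from $\overline\C_{3/4}$ (where Lemma~\ref{lem:stabilityw} is stated) to $\overline\C_{1-\eta}$ with the factor $\eta^{-(1+\alpha)}$: the paper simply writes ``by the lemma (rescaled)'' and asserts the bound, whereas you propose an explicit interpolation between the global $L^\infty$ smallness and the interior $C^{1,\bar\alpha}$ bound from Lemma~\ref{lem:regflatw}. Your route is more transparent about where the exponent $-(1+\alpha)$ comes from, since $(1+\bar\alpha)\cdot\tfrac{1+\alpha}{1+\bar\alpha}=1+\alpha$ falls out cleanly.

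There is, however, a genuine wrinkle in your absorption step. Interpolation delivers a bound of the form $C(\|\varphi\|_{L^\infty}+|g_0|)^{(1+\alpha)/(1+\bar\alpha)}\,\vep^{(\bar\alpha-\alpha)/(1+\bar\alpha)}\,\eta^{-(1+\alpha)}$, and to turn this into $\eta^{-(1+\alpha)}\vep$ you propose invoking Lemma~\ref{lem:stabilityw} with a smaller $\vep'$. But that produces a correspondingly smaller $\delta'=\delta(\vep')$, whereas the lemma as stated fixes $\delta$ to be the one associated to the \emph{original} $\vep$ in Lemma~\ref{lem:stabilityw}. So strictly speaking you are proving a variant: ``for every $\vep$ there exists some $\delta$ (possibly smaller than the one in Lemma~\ref{lem:stabilityw}) such that\ldots''. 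This is harmless downstream, since Lemma~\ref{Lm:Stab} only needs the existence of \emph{some} $\delta$ for each $\vep$, and the precise $\eta$-exponent only affects the value of $\tau$ there; but you should either state the lemma with this weaker link between $\delta$ and $\vep$, or carry the interpolation bound $C\vep^\theta\eta^{-(1+\alpha)}$ through Lemma~\ref{Lm:Stab} directly.
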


\begin{proof}
   Given $\vep>0$, let $0<\delta<1/2$ be as in Lemma~\ref{lem:stabilityw}. By \eqref{eq:repv}, recall that 
   \begin{equation} \label{eq:v}
\overline v = \overline w + \tfrac{g_0}{2}|x_n-\delta|  \quad \text{ and } \quad
\underline v = \underline w + \tfrac{g_0}{2}|x_n+\delta|,
\end{equation}
where $\overline w, \underline w \in C(\overline\C_1)$ are viscosity solutions to \eqref{pb:w} with $a=\delta$ and $a=-\delta$, respectively.
Let $w_0$ be the viscosity solution to \eqref{pb:w} with $a=0$.
By Lemma~\ref{lem:stabilityw}, it follows that
$$
\|\overline v -\underline v \|_{L^\infty(\C_1)} \leq 
\|\overline w -w_0 \|_{L^\infty(\C_1)} + \|\underline w -w_0 \|_{L^\infty(\C_1)}
+ |g_0|\delta \leq 2\vep + |g_0|\delta.
$$
Moreover, by the lemma (rescaled), for any $0<\eta<1$,
$$
 \|\nabla' \overline v - \nabla' \underline v \|_{C^{0,\alpha}({\overline\C_{1-\eta}})} =  \|\nabla' \overline w - \nabla' \underline w \|_{C^{0,\alpha}(\overline\C_{1-\eta})} \leq \eta^{-(1+\alpha)} \vep.
$$
Let $D_{\delta,\eta}:=\C_{1-\eta}\cap \{|x_n|<\delta\}$. On this domain, we have $\overline v=\overline v^-$ and $\underline v=\underline v^+$, where
$$
\overline v^- = \overline w + \tfrac{g_0}{2} (x_n-\delta) \quad\text{ and } \quad
\underline v^+ = \underline w + \tfrac{g_0}{2} (x_n+\delta). 
$$
Therefore, applying the lemma again, we get
\begin{align*}
  \|(\underline v^+)_{x_n}-(\overline v^-)_{x_n}-g_0\|_{L^\infty(D_{\delta,\eta})} 
 &=  \|\underline w_{x_n}-\overline w_{x_n}\|_{L^\infty(D_{\delta,\eta})} 
 \leq  \eta^{-(1+\alpha)} \vep.
\end{align*}
\end{proof}

Next, we establish the stability of the curved interface problem with respect to perturbations of the interface.
This is the key lemma to prove the $C^{1,\alpha}$ estimates in the next section. 

\begin{Lemma}[Stability for curved and flat problems]\label{Lm:Stab}
    Fix $0<\alpha<\bar \alpha$ and $0<\varepsilon<1$. Let $0<\delta<\vep/2$ be as in Lemma~\ref{lem:stabilityw}.
    Assume that $u\in C(\C_1)$ is a viscosity solution to \eqref{eq:main1} such that
    \[
    \|u\|_{L^\infty(\C_1)}\leq 1, \quad |g(0)|\leq 1, \quad \text{ and }\quad
    \|\psi\|_{\Coa(\overline{\C_1'})}+\|g-g(0)\|_{L^\infty(\Gamma)}+\|f\|_{L^{n+1}(\C_1)}\leq \delta.
    \]
        Set $g_0=g(0)$, and let $\overline v$ and $\underline v$ be given as in Lemma~\ref{lem:stabilityflat} with $\C_1$ replaced by $\C_{3/4}$. Define
    $$
    v :=\underline v \chi_{\Omega^+}+ \overline v \chi_{\overline \Omega^-} \quad \text{ on } \overline\C_1.
    $$ 
    If $v=u$ on $\p_p\C_{3/4}$, then there is $0<\tau<1/6$, depending only on $\alpha$ and $\alpha_1$, such that
    \[
    \|u-v\|_{L^\infty(\C_{1/2})}\leq C\varepsilon^{\tau},
    \]
    where $C>0$ depends only on  $n$, $\lambda$, $\Lambda$, and $\alpha$.
\end{Lemma}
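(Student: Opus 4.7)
The plan is to compare $u$ with the piecewise-defined $v$ via the ABP--Krylov--Tso estimate (Theorem~\ref{Thm:ABP}), treating the jump of $v$ across $\Gamma$ as a small perturbation, and then to interpolate with interior H\"older regularity to extract the explicit rate $\varepsilon^{\tau}$. The first step is to verify that $v$ \emph{almost} solves \eqref{eq:main1}. Since $\|\psi\|_{\Coa}\leq\delta<1/2$, we have $|\psi|,|\nabla'\psi|\leq\delta$, so $\Omega^+\subset\C_{3/4}\cap\{x_n>-\delta\}$ and $\Omega^-\subset\C_{3/4}\cap\{x_n<\delta\}$. Hence $\underline v$ is classical on $\overline{\Omega^+}$, $\overline v$ is classical on $\overline{\Omega^-}$, and $v$ satisfies $\p_t v-F^{\pm}(D^2 v)=0$ pointwise in $\Omega^{\pm}$. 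Applying Lemma~\ref{lem:stabilityflat} (rescaled to $\C_{3/4}$, with a fixed small $\eta$) gives $\|\overline v-\underline v\|_{L^{\infty}(\C_{3/4})}\leq C\varepsilon$, and on $D_{\delta,\eta}$ also $\|\nabla'\underline v-\nabla'\overline v\|_{L^{\infty}}+\|\underline v_{x_n}-\overline v_{x_n}-g_0\|_{L^{\infty}}\leq C\varepsilon$. Writing $\nu=(-\nabla'\psi,1)/\sqrt{1+|\nabla'\psi|^2}$ and using $|\nabla'\psi|,|g-g_0|\leq\delta\leq\varepsilon$, a direct calculation yields the two defect estimates
\[
|v^+-v^-|\leq C\varepsilon \qquad \text{and} \qquad |v_{\nu}^+-v_{\nu}^- -g|\leq C\varepsilon \qquad \text{on } \Gamma\cap\C_{5/8}.
\]

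The second step is to compare $u$ and $v$ despite the jump. Set $w=u-v$ piecewise; then $w\in\mathcal{S}^*(|f^{\pm}|)$ in $\Omega^{\pm}$ with $\|f\|_{L^{n+1}(\C_1)}\leq\varepsilon$, $w\equiv0$ on $\p_p\C_{3/4}$, a normal-derivative defect $|w_{\nu}^+-w_{\nu}^-|\leq C\varepsilon$ and a pointwise jump $|w^+-w^-|\leq C\varepsilon$ on $\Gamma\cap\C_{5/8}$. To make the problem continuous across $\Gamma$ so that Theorem~\ref{Thm:ABP} applies, I would add a nonnegative continuous corrector $h$ on $\overline{\Omega^-}$ with $h|_{\Gamma}=(v^+-v^-)|_{\Gamma}$, built from the barriers of Lemma~\ref{Lm:Barrier11} and Lemma~\ref{Lm:Barrier_Wang} so that $h$ is a classical supersolution of $\p_t h-\Mm(D^2 h)\geq 0$ in $\Omega^-$ with $\|h\|_{L^{\infty}}\leq C\varepsilon$. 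The modified function $\tilde w=w$ on $\overline{\Omega^+}$, $\tilde w=w+h$ on $\overline{\Omega^-}$ is then continuous, still lies in an appropriate Pucci class on both sides with source of size $O(\varepsilon)$, and satisfies a transmission inequality with defect $O(\varepsilon)$; Theorem~\ref{Thm:ABP}, applied to $\tilde w$ and to $-\tilde w$, produces
\[
|w(x,t)|\leq C\varepsilon\,\rho^{-\gamma} \qquad \text{for } (x,t)\in\C_{5/8} \text{ at parabolic distance } \rho \text{ from } \Gamma,
\]
where $\gamma=\gamma(n,\lambda,\Lambda,\alpha)>0$ accounts for the dependence of the ABP constant on the distance to the interface.

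The third step is to interpolate. By Theorem~\ref{thm:intholder} and Proposition~\ref{prop:regflat}, both $u$ and $v|_{\overline{\Omega^{\pm}}}$ enjoy uniform $C^{0,\alpha_1}$ bounds. Every point in $\{|x_n-\psi|<r\}\cap\C_{1/2}\cap\Omega^{\pm}$ can be joined, staying on the same side of $\Gamma$, to a point in $\C_{5/8}\cap\Omega^{\pm}$ at parabolic distance $\geq r$ from $\Gamma$ within parabolic distance $O(r)$. Combining this with the previous step gives $|u-v|\leq C(\varepsilon r^{-\gamma}+r^{\alpha_1})$ on $\C_{1/2}$, and optimizing over $r$ produces
\[
\|u-v\|_{L^{\infty}(\C_{1/2})}\leq C\varepsilon^{\alpha_1/(\alpha_1+\gamma)},
\]
with the exponent lying in $(0,1/6)$ and depending only on $\alpha$ and $\alpha_1$.

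The main obstacle is the construction of the corrector $h$ in the second step: it must match the $O(\varepsilon)$ pointwise jump of $v$ along the \emph{curved} interface $\Gamma$ while remaining a classical supersolution in $\Omega^-$ and leaving the $O(\varepsilon)$ control on the transmission defect intact. A possible alternative, avoiding $h$ entirely, is to apply the ABP argument directly to the upper and lower semicontinuous envelopes of $w$; but then one has to verify that the viscosity transmission inequality passes to these envelopes in the one-sided sense required by Theorem~\ref{Thm:ABP}, which demands similar care.
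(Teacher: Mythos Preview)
Your overall architecture---defect estimates for $v$, a continuous proxy, then interpolation---matches the paper's, but Step~2 has genuine gaps and the paper executes it differently.

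First, the corrector $h$ you sketch is problematic: you require $h|_\Gamma=(v^+-v^-)|_\Gamma$, which has no sign, while simultaneously $h\geq 0$ and $\p_t h-\Mm(D^2h)\geq 0$; matching prescribed boundary data on $\Gamma$ is essentially a Dirichlet problem, not something the barriers of Lemmas~\ref{Lm:Barrier11}--\ref{Lm:Barrier_Wang} deliver. Second, your claim $|w|\leq C\varepsilon\rho^{-\gamma}$ with $\rho$ the distance to $\Gamma$ is not a consequence of Theorem~\ref{Thm:ABP}: that estimate is global and its constant depends only on $\|\psi\|_{\Coa}$, not on a point's distance to the interface. Third, there is a domain mismatch you never resolve: $u-v$ vanishes on $\p_p\C_{3/4}$, but your defect bounds hold only on $\Gamma\cap\C_{5/8}$, so Theorem~\ref{Thm:ABP} cannot be applied on either cylinder directly.

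The paper handles this as follows. Instead of a corrector, it defines a \emph{continuous} intermediate $w$ by solving the Cauchy--Dirichlet problems $\p_tw-F^\pm(D^2w)=0$ in $\Omega_{3/4}^\pm$ with $w=\tfrac12(v^++v^-)$ on $\Gamma_{3/4}$ and $w=v=u$ on $\p_p\C_{3/4}$. The classical maximum principle on each side gives $\|w-v\|_{L^\infty}\leq\tfrac12\|\overline v-\underline v\|_{L^\infty(\Gamma_{3/4})}\leq 2\varepsilon$. Now $u-w$ is continuous and satisfies the transmission condition with defect $\widetilde g=g-(w_\nu^+-w_\nu^-)$. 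The point is that $\widetilde g$ can only be bounded on $\Gamma_{3/4-\eta}$, and there by $C\eta^{-(2+\alpha)}\varepsilon$, because the gradient control on $v$ (hence on $w$) degenerates near $\p_p\C_{3/4}$---this is the $\eta^{-(1+\alpha)}$ factor in Lemma~\ref{lem:stabilityflat}. One then applies Theorem~\ref{Thm:ABP} to $u-w$ on $\C_{3/4-\eta}$: the boundary mismatch there is $O(\eta^{\alpha_1/2})$ by the H\"older regularity of $u$ and $w$, while the transmission and source terms contribute $O(\eta^{-(2+\alpha)}\varepsilon)$. Optimizing $\eta\sim\varepsilon^{1/(2(2+\alpha))}$ yields $\|u-v\|_{L^\infty(\C_{1/2})}\leq C\varepsilon^\tau$ with $\tau=\tfrac{\alpha_1}{4(2+\alpha)}$. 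Thus the interpolation parameter is the distance $\eta$ to the \emph{lateral boundary} of $\C_{3/4}$, not the distance to $\Gamma$, and this is precisely what reconciles the domain mismatch you ran into.
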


\begin{proof}
   By Theorem \ref{thm:intholder} and the assumptions on $u, g, f^\pm$, we have that $u\in C^{0,\alpha_1}(\overline\C_{3/4})$ with
    \[
    \|u\|_{C^{0,\alpha_1}(\overline\C_{3/4})}\leq C_1.
    \]
    Since $v=u$ on $\p_p \C_{3/4}$, by Proposition~\ref{Pro:Holder_global}, it follows that $v^\pm\in C^{0,\beta}(\overline \Omega_{3/4}^\pm)$, with $\beta=\alpha_1/2$, and
    \[
    \|v\|_{C^{0,\beta}(\overline \Omega_{3/4}^\pm)}\leq C(1+\|u\|_{C^{0,\alpha_1}(\overline \C_{3/4})})\leq C_2.
    \]
    
    We emphasize that $v$ is discontinuous across $\Gamma_{3/4}$ since $\overline v-\underline v\not=0$ on $\Gamma_{3/4}$. Define $w\in C(\overline\C_{3/4})$ as the unique viscosity solution to the two following Cauchy-Dirichlet problems:
    \begin{align} \label{pb:CD}
        \begin{cases}
            \p_t w^\pm-F^\pm(D^2w^\pm)=0 & \mbox{ in } \Omega^\pm_{3/4}\\
            w=\frac{1}{2}(v^++v^-) &\mbox{ on } \Gamma_{3/4}\\
            w=v & \mbox{ on } \p_p \C_{3/4}.
        \end{cases}
    \end{align}
    Note that, by definition of $v$, we have
    $$
    v^+ = \underline v^+  \quad \text{ and } \quad v^- = \overline v_-.
    $$
 By Lemma~\ref{lem:regflatw} and \eqref{eq:v}, for any $0<\eta<3/4$, we have $v^\pm\in \Coa(\overline \Omega_{3/4-\eta}^\pm)$,  with 
    \begin{align}\label{eq:5.6}
        \eta\|\nabla v^\pm\|_{L^\infty(\Omega_{3/4-\eta}^\pm)}+\eta^{1+\alpha}[\nabla v^\pm]_{C^{0,\alpha}(\overline \Omega_{3/4-\eta}^\pm)}\leq C_3.
    \end{align}
Hence, by the pointwise $\Coa$ regularity for \eqref{pb:CD}, we get 
$$w^\pm\in \Coa(\overline \Omega_{3/4-\eta}^\pm).$$
Moreover, since $w-v \in \mathcal{S}(0)$ in $\Omega_{3/4}^\pm$, by the classical maximum principle, and Lemma~\ref{lem:stabilityflat},
    \begin{equation} \label{eq:w-v}
 \|w-v\|_{L^\infty(\C_{3/4})}\leq  \|w^\pm-v^\pm\|_{L^\infty(\Gamma_{3/4})}= \tfrac{1}{2}\|\bar v - \underline v\|_{L^\infty(\Gamma_{3/4})}\leq \vep+  \delta/2 \leq 2\vep.
    \end{equation}
    
Fix $0<\eta<1/4$ to be chosen later. On the other hand, we have that $u-w$ satisfies
    \begin{align*}
        \begin{cases}
            u-w\in \mathcal{S}(f^\pm) &\mbox{ in } \Omega_{3/4}^\pm\\
            (u-w)_\nu^+-(u-w)_\nu^-=\widetilde g & \mbox{ on } \Gamma_{3/4-\eta}\\
            u-w=0 &\mbox{ on } \p_p \C_{3/4},
        \end{cases}
    \end{align*}
    where the transmission condition holds in the viscosity sense, and
    $$
    \widetilde g := g - (w^+_\nu - w^-_\nu). 
    $$
We will see that $\widetilde g$ is small by showing that $w_\nu^+-w_\nu^-\approx g(0)$ on $\Gamma_{3/4-\eta}$. 
    Indeed, for any $x\in \Gamma_{3/4-\eta}$, 
    \begin{align*}
        w_\nu^+-w_\nu^--g(0)\,&=(w-v)_\nu^+-(w-v)_\nu^-+(v_\nu^+-v_\nu^--g(0)) \equiv g_1+g_2+g_3.
        \end{align*} 
 We proceed to show that $g_1, g_2, g_3$ are small for $\vep$ sufficiently small. For $g_3$, it holds that
    \begin{align*}
        |g_3|\,&\leq |v_\nu^+-v_{x_n}^+|+|v_{x_n}^+-v_{x_n}^--g(0)|+|v_\nu^--v_{x_n}^-|.    \end{align*}
    By Lemma~\ref{lem:stabilityflat}, $|v_{x_n}^+-v_{x_n}^--g(0)|\leq \eta^{-(1+\alpha)}\varepsilon$. Moreover, by \eqref{eq:5.6} and $|\nabla'\psi| \leq \delta$, we get
    \begin{align*}
        |v_\nu^\pm-v_{x_n}^\pm|\leq |\nabla v^\pm|\,|\nu-e_n| \leq C_3 \eta^{-1} |\nabla'\psi|\leq C_3 \eta^{-1}\delta.
    \end{align*}
    Hence, $|g_3|\leq C \eta^{-(1+\alpha)}\varepsilon.$\medskip
    
     Next, we bound $g_1$.  By pointwise boundary $\Coa$ estimates for the Pucci class \cite[Theorem~3.3]{Ma-Moreira-Wang}, 
    \begin{equation} \label{eq:g1}
    |g_1|\leq C\eta^{-1} \big(\|w^+-v^+\|_{L^\infty(\Omega_{3/4}^+)}+\|\overline v-\underline v\|_{\Coa(\overline \Gamma_{3/4-\eta})}\big),
    \end{equation}
    where
    \[
    \|v\|_{\Coa(\overline \Gamma_{3/4-\eta})} :=\|v\|_{L^\infty(\Gamma_{3/4-\eta})}+\|\nabla'v+v_{x_n}\nabla'\psi\|_{C^{0,\alpha}(\overline \Gamma_{3/4-\eta})}.
    \]
    Using again Lemma~\ref{lem:stabilityflat} together with \eqref{eq:5.6}, we get
    \begin{align} \nonumber
        \|\overline v-\underline v\|_{\Coa(\overline \Gamma_{3/4-\eta})}\leq&\, \|\overline v-\underline v\|_{L^\infty(\C_{3/4})}+\|\nabla'\overline v-\nabla'\underline v\|_{\Cza(\overline\C_{3/4-\eta})}+\|(\overline v_{x_n}-\underline v_{x_n})\nabla'\psi\|_{\Cza(\overline \Gamma_{3/4-\eta})}\\  \label{eq:diff}
        \leq &\,2\varepsilon +\delta +\eta^{-(1+\alpha)}\varepsilon+2C_3\eta^{-(1+\alpha)}\delta.
    \end{align}
   Combining \eqref{eq:w-v}, \eqref{eq:g1}, and \eqref{eq:diff}, we obtain $|g_1|\leq C\eta^{-(2+\alpha)}\varepsilon$. Similarly for $g_2$.\medskip
    
    Therefore, for any $(x,t)\in \Gamma_{3/4}$, it follows that
    \begin{align*}
|\widetilde g(x,t)| & \leq |g-g(0)| + |w_\nu^+-w_\nu^--g(0)|\\
& \leq \delta + |g_1| + |g_2|+ |g_3|\\
& \leq  C\eta^{-(2+\alpha)}\varepsilon.
    \end{align*}
    Furthermore, for any $(x,t)\in \partial_p \C_{3/4-\eta}$, since $u=w$ on $\partial_p \C_{3/4}$, we have
    \begin{align*}
    |u(x,t)-w(x,t)| \leq [u]_{C^{0,\alpha_1}(\overline\C_{3/4})} \eta^{\alpha_1} + [w]_{C^{0,\beta}(\overline\C_{3/4})} \eta^\beta \leq (C_1+C_2)\eta^\beta,
    \end{align*}
    where $\beta=\alpha_1/2$, and $C_1, C_2>0$ are given above.\medskip
    
By the maximum principle (Theorem~\ref{Thm:ABP}) applied to $u-w$ in $\Omega_{3/4-\eta}^\pm$, we get
\begin{align} \nonumber 
\|u-w\|_{L^\infty(\Omega_{3/4-\eta}^\pm)} 
& \leq \|u-w\|_{L^\infty(\partial_p \C_{3/4-\eta})} + C \big(\|\widetilde g\|_{L^\infty(\Gamma_{3/4-\eta})} + \|f\|_{L^{n+1}(\C_{3/4})} \big) \\ \label{eq:u-w}
& \leq (C_1+C_2) \eta^{\beta}+ C\eta^{-(2+\alpha)}\vep+C\vep.
\end{align}

Choose $\eta<\min\{1/4,\varepsilon^\gamma\}$ with $\gamma=\tfrac{1}{2(2+\alpha)}$. Combining \eqref{eq:w-v} and \eqref{eq:u-w}, we conclude that
$$
\|u-v\|_{L^\infty(\C_{1/2})} \leq \|u-w\|_{L^\infty(\C_{1/2})} +\|w-v\|_{L^\infty(\C_{1/2})}
\leq C ( \vep^{\gamma\beta} + \vep^{1/2} +\vep) 
\leq C \vep^{\gamma\beta}.
$$
\end{proof}

\section{$\Coa$ regularity: proof of the main theorem}\label{Sec:Coa}

In this section, we prove our main result (Theorem~\ref{thm:globalest}). The global $C^{1,\alpha}$ estimate is obtained by patching the classical interior estimates and the boundary estimates as usual. Hence, we only need to prove the pointwise $\Coa$ estimates at the interface (Theorem~\ref{Thm:main_boundary_Coa_regularity}).

The proof of this theorem will follow from iterating the next lemma.
\begin{Lemma}\label{Lm:geom_it}
    Given $0<\alpha <\bar \alpha$, there exist $0<\delta,\rho <1/2$, depending only on  $n$, $\lambda$, $\Lambda$, $\alpha$, $\bar \alpha$, such that for any viscosity solution $u$ to \eqref{eq:main1} satisfying
 \[
    \|u\|_{L^\infty(\C_1)}\leq 1, \quad |g(0)|\leq 1, \quad \text{ and }\quad
    \|\psi\|_{\Coa(\overline{\C_1'})}+\|g-g(0)\|_{L^\infty(\Gamma)}+C_{f^-}+C_{f^+}\leq \delta,
    \]
    there exist  affine functions $l^{\pm}(x) = A^{\pm} \cdot x + b$, with $|A^+| + |A^-| + |b| \leq C_0$, such that
\[ \|u^{\pm} - l^{\pm}\|_{L^\infty(\Omega^\pm_\rho)} \leq \rho^{1+\alpha}, \]
where $C_0>0$ depends only on $n$, $\lambda$, and $\Lambda$.
Moreover, $A^+ - A^- = g(0)e_n$. 
\end{Lemma}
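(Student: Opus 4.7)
The strategy is to compare $u$ with the piecewise-flat function $v$ from Lemma~\ref{Lm:Stab}, extract affine approximations from the $C^{1,\bar\alpha}$ regularity of the homogeneous flat solutions, and then rigidly couple $l^-$ to $l^+$ so that the transmission constraint $A^+-A^-=g(0)e_n$ holds by construction.

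Fix $\vep>0$ to be chosen later, and let $\delta=\delta(\vep)$ be the constant from Lemma~\ref{Lm:Stab}; its hypotheses on $\C_{3/4}$ are met since $\|f\|_{L^{n+1}(\C_1)}\le C(C_{f^-}+C_{f^+})\le C\delta$. With $g_0:=g(0)$, build the flat solutions $\overline v,\underline v$ on $\C_{3/4}$ with interfaces $T_{\delta},T_{-\delta}$ and boundary data $u|_{\p_p\C_{3/4}}$, and set $v=\underline v\,\chi_{\Omega^+}+\overline v\,\chi_{\overline{\Omega^-}}$; Lemma~\ref{Lm:Stab} then yields $\|u-v\|_{L^\infty(\C_{1/2})}\le C\vep^{\tau}$. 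By \eqref{eq:repv}, $\underline v=\underline w+\tfrac{g_0}{2}|x_n+\delta|$ and $\overline v=\overline w+\tfrac{g_0}{2}|x_n-\delta|$, where $\underline w,\overline w$ solve the homogeneous two-sided flat problems. Because $\|\psi\|_{L^\infty}\le\delta$, on $\Omega^+_\rho$ one has $x_n\ge-\delta$ so $|x_n+\delta|=x_n+\delta$, and symmetrically $|x_n-\delta|=\delta-x_n$ on $\Omega^-_\rho$. Lemma~\ref{lem:regflatw} (with $\eta=\tfrac14$) provides a universal $C^{1,\bar\alpha}$ bound for $\underline w,\overline w$ on $\overline\C_{1/2}$, giving the parabolic affine approximations $|\underline w(x,t)-\underline w(0)-\nabla\underline w(0)\cdot x|,\ |\overline w(x,t)-\overline w(0)-\nabla\overline w(0)\cdot x|\le C\rho^{1+\bar\alpha}$ on $\C_\rho$.

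Now set
\[
b=\underline w(0)+\tfrac{g_0\delta}{2},\quad A^+=\nabla\underline w(0)+\tfrac{g_0}{2}e_n,\quad A^-=A^+-g_0 e_n,\quad l^\pm(x)=A^\pm\cdot x+b,
\]
so that $A^+-A^-=g_0 e_n$ holds by construction and $|A^\pm|+|b|\le C_0$ by the previous step. On $\Omega^+_\rho$, $l^+$ is exactly the affine approximation of $\underline v$ at the origin, whence $|u^+-l^+|\le C\vep^\tau+C\rho^{1+\bar\alpha}$. On $\Omega^-_\rho$, $l^-$ differs from the true affine approximation of $\overline v$ by $(\underline w(0)-\overline w(0))+(\nabla\underline w(0)-\nabla\overline w(0))\cdot x$, a quantity $\le C\vep$ on $\C_\rho$ by Lemma~\ref{lem:stabilityw} applied to both $\underline w$ and $\overline w$ against the symmetric solution $w_0$. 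Hence $|u^--l^-|\le C\vep^\tau+C\vep+C\rho^{1+\bar\alpha}$ on $\Omega^-_\rho$. To close, pick $\rho$ with $C\rho^{\bar\alpha-\alpha}\le\tfrac12$ (possible since $\alpha<\bar\alpha$), then $\vep$ with $C(\vep^\tau+\vep)\le\tfrac12\rho^{1+\alpha}$, and finally set $\delta=\delta(\vep)$.

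The one delicate point is this rigid matching: the $C^{1,\bar\alpha}$ regularity of a single flat solution gives a good affine approximation only on one side at a time, yet the transmission identity must hold \emph{exactly}. Defining $l^-$ as a prescribed shift of $l^+$ introduces a mismatch with the true affine approximation of $\overline v$, and it is precisely the flat-to-flat stability of Lemma~\ref{lem:stabilityw} — controlling how much $\underline w$ and $\overline w$ differ at the origin in both value and slope — that makes this forced choice affordable.
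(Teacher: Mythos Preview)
Your proof is correct and follows essentially the same approach as the paper: compare $u$ to the piecewise-flat function $v$ via Lemma~\ref{Lm:Stab}, extract affine approximations from the $C^{1,\bar\alpha}$ regularity of the flat solutions (Lemma~\ref{lem:regflatw}), and then correct them so that $A^+-A^-=g(0)e_n$ holds exactly, controlling the correction by the flat-to-flat stability (Lemma~\ref{lem:stabilityw}/\ref{lem:stabilityflat}). The only cosmetic difference is that the paper performs a \emph{symmetric} correction $l^\pm = l_v^\pm \mp \tfrac{1}{2}l_\vep$ (subtracting half the mismatch from each side) rather than your asymmetric choice of fixing $l^+$ exactly and forcing $l^-$; both variants are governed by the same $O(\vep)$ bound on $|\underline w(0)-\overline w(0)|+|\nabla\underline w(0)-\nabla\overline w(0)|$.
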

\begin{proof}
    Fix $0<\varepsilon<1$ and $0<\rho<1/2$ to be chosen. Let $0<\delta<\vep/2$ be as in Lemma~\ref{lem:stabilityw}, and let $v$ be as in Lemma~\ref{Lm:Stab}. By construction, $v^+=\underline v$ and $v^-=\overline v$, where $ \underline v$ and $\overline v$ satisfy \eqref{eq:flat_Ta}, replacing $\C_1$ by $\C_{3/4}$, with $a = -\delta$ and $a = \delta$, respectively.  Consider
\[ 
l_v^\pm(x) := \nabla v^\pm(0) \cdot x +   v^\pm (0).
\]
By Lemma~\ref{lem:regflatw} and \eqref{eq:repv}, we have $\overline v \in C^{1,\bar\alpha}(\C_{1/2} \cap \{x_n \leq \delta\})$, and
\[ 
\|\overline v\|_{C^{1,\bar\alpha}(\C_{1/2} \cap \{x_n \leq \delta\})} \leq C_0, 
\]
where $C_0 > 0$ depends only on $n$, $\lambda$, and $\Lambda$. Similarly, $\underline v \in C^{1,\bar\alpha}(\C_{1/2} \cap \{x_n \geq -\delta\})$, and
\[
\|\underline v\|_{C^{1,\bar\alpha}(\C_{1/2} \cap \{x_n \geq -\delta\})} \leq C_0.
\]
Hence, we have $|\nabla v^\pm(0)|+|v^\pm(0)|\leq C_0$. 

 To satisfy the transmission condition, we add a small perturbation to $l_v^\pm$. Namely, we define the affine functions $l^\pm(x)=A^\pm \cdot x + b$ as
$$
l^\pm := l_v^\pm \mp \tfrac{1}{2} l_\vep,
$$
where $l_\vep(x)= A_\vep \cdot x + b_\vep$ is given by
$$
    b_\vep :=  v^+(0)-v^-(0) \quad \text{ and } \quad A_\vep := \nabla v^+(0)-\nabla v^-(0)-g(0)e_n.
$$
Note that $l^+(0)=l^-(0)$ and $A^+-A^- = \nabla v^+(0) - \nabla v^-(0) - A_\vep= g(0)e_n.$
Moreover, by Lemma~\ref{lem:stabilityflat}, using that $|g(0)|\leq 1$, and $\delta\leq \vep/2$, we have
\begin{align} \label{eq:error}
|b_\vep| \leq 3\vep \quad \text{ and } \quad |A_\vep| \leq 2^{1+\alpha} \vep.
\end{align}
Since $\rho< 1/2$, it follows that
\begin{align*}
    \|u^+-l^+\|_{L^\infty(\Omega_\rho^+)} 
    &\leq \|u^+-v^+\|_{L^\infty(\Omega_\rho^+)}+\|v^+-l_v^+\|_{L^\infty(\Omega_\rho^+)}+\tfrac{1}{2}|A_\vep| \rho + \tfrac{1}{2}|b_\vep|\\ 
    &\leq C\varepsilon^\tau+C_0\rho^{1+\bar \alpha}+ \big(\tfrac{3}{2}+2^\alpha\big)\varepsilon,
\end{align*}
where in the last inequality, the first estimate follows from Lemma~\ref{Lm:Stab}, the second one from the $C^{1,\bar\alpha}$ regularity of $v^+$, and the third one by \eqref{eq:error}. Choosing
\[
\rho < \min \left\{ \frac{1}{2}, \Big(\frac{1}{3C_0}\Big)^\frac{1}{\bar\alpha- \alpha}\right\} \quad \text{ and } \quad \varepsilon\leq \min\left\{\Big(\frac{\rho^{1+\alpha}}{3C}\Big)^\frac{1}{\tau},\frac{\rho^{1+\alpha}}{1/2+2^\alpha/3}\right\},
\]
we get $\|u^+-l^+\|_{L^\infty(\Omega_\rho^+)}\leq \rho^{1+\alpha}$, as intended.
The estimate in $\Omega_\rho^-$ follows in a similar way.
\end{proof}

For the next proof, we define the pointwise H\"{o}lder semi-norms
$$
[\psi]_{C^{1,\alpha}(0)} := \sup_{(x',t) \in \C_1'\setminus\{0\}} \frac{|\psi(x',t)-\psi(0)-\nabla '\psi(0)\cdot x'|}{(|x'|^2+ |t|)^{(1+\alpha)/2}},
$$
and
$$
[g]_{C^{0,\alpha}(0)} := \sup_{\substack{(x,t) \in \Gamma\setminus\{0\}}} \frac{|g(x,t)-g(0)|}{(|x|^2+|t|)^{\alpha/2}}.
$$

\begin{customproof}{Theorem~\ref{Thm:main_boundary_Coa_regularity}}
Let \(0<\alpha<\bar\alpha\). Let  $0< \delta, \rho<1/2$ be as in Lemma~\ref{Lm:geom_it}, and fix $\widetilde C>0$ to be chosen large.
Let 
\[
0<\delta_0<\min \left\{ \frac{\delta}{3}, \frac{\rho^{1+\alpha}}{2 \widetilde C}, \frac{\delta}{2+4\widetilde C} \right\}.
\]
First, we normalize the problem. Recall that we are assuming \(0\in\Gamma\), i.e., \(\psi(0)=0\). \medskip

\begin{enumerate}
\item[$(i)$] After a rotation, we can assume that \(\nu(0)=e_n\). In particular, \(\nabla'\psi(0)=0\). Also, we can suppose that \([\psi]_{C^{1,\alpha}(0)} \leq \delta_0\).
Indeed, let \(K^\alpha=[\psi]_{C^{1,\alpha}(0)}/\delta_0\), and consider the function $v(y,s)=u(y/K,s/K^2)$ for $(y,s)\in \C_1.$ Then $v$ satisfies
\[
\begin{cases}
\p_s v-F_K^\pm(D^2v) =  f_K^\pm & \text{in } \widetilde \Omega^\pm,\\
v_\nu^+- v_\nu^- = g_K & \text{on } \widetilde \Gamma,
\end{cases}
\]
where  \(\widetilde \Omega^\pm = \{ (y,s) \in \C_1 : (y/K,s/K^2) \in \Omega^\pm\}\),  \(\widetilde \Gamma = \{ (y,s) \in \C_1 :(y/K,s/K^2) \in \Gamma\}\), \(F_K^\pm(M) = K^{-2} F^\pm(K^2 M)\) for  \(M\in \mathcal{S}(n)\),  \(f_K^\pm(y,s)=K^{-2} f^\pm(y/K,s/K^2)\) for \((y,s)\in \widetilde \Omega^\pm \),  and \(g_K(y,s)=K^{-1} g(y/K,s/K^2)\) for \((y,s)\in \widetilde \Gamma\).  
Then \(F_K^\pm\) are uniformly \( (\lambda,\Lambda) \)-elliptic. Moreover,
\[
\left( \fint_{\widetilde\Omega^\pm_r} |f_K^\pm(y,s)|^{n+1}\, dyds \right)^\frac{1}{n+1} 
\leq C_{f^\pm_K} r^{\alpha-1}, \quad \text{ for all $r>0$ small},
\]
with \(C_{f^\pm_K} = K^{-(1+\alpha)} C_{f^\pm}\),  and
\[
[g_K]_{C^{0,\alpha}(0)} := \sup_{\substack{(y,s) \in \widetilde \Gamma\setminus\{0\}}} \frac{|g_K(y,s)-g_K(0)|}{(|y|^2+|s|)^{\alpha/2}}
\leq K^{-(1+\alpha)} [g]_{C^{0,\alpha}(0)}.
\]
If \((y,s)\in \widetilde\Gamma\), then \(y_n=\widetilde \psi(y',s)\), with \(\widetilde{\psi}(y',s) = K \psi (y'/K,s/K^2)\), and 
\[
[\widetilde \psi]_{C^{1,\alpha}(0)} 
\leq K^{-\alpha} [\psi]_{C^{1,\alpha}(0)}= \delta_0.
\]

\item[$(ii)$] We can assume that \(\| u \|_{L^\infty(\C_1)}\leq 1\), $|g(0)|\leq 1$, and 
\[
  [g]_{C^{0,\alpha}(0)} + C_{f^-}+C_{f^+} \leq \delta_0.
\]
Indeed, let \(K=\| u \|_{L^\infty(\C_1)}+ |g(0)|+ \delta_0^{-1}([g]_{C^{0,\alpha}(0)} +C_{f^-}+C_{f^+})\), and consider \(v =u/K\). Then \(v\) satisfies
\[
\begin{cases}
\p_tv-F_K^\pm(D^2v) =  f_K^\pm & \text{in } \Omega^\pm,\\
v_\nu^+- v_\nu^- = g_K & \text{on } \Gamma,
\end{cases}
\]
where \(F_K^\pm(M) = K^{-1} F^\pm(K M)\), \(f_K^\pm=K^{-1} f^\pm\), and \(g_K=K^{-1} g\).  
Moreover, \(\|v\|_{L^\infty(\C_1)}\leq1\), $|g_K(0)|\leq 1$, and \([g_K]_{C^{0,\alpha}(0)} + C_{f_K^-}+C_{f_K^+}\leq \delta_0\).
\end{enumerate}
For simplicity, we keep the same notation, i.e., \(\psi\), \(u\), \(F^{\pm}\), \(f^\pm\), and \(g\).\medskip

Under these assumptions, it suffices to prove the following.

\medskip
\noindent\textbf{Claim.} For every \(k \geq 1\), there exist affine functions \(l_k^\pm(x)=A_k^\pm \cdot x + b_k\) such that
\[
\rho^{k-1} |A^\pm_{k}-A^\pm_{k-1}|  + |b_{k}-b_{k-1}|  \leq C_0 \rho^{(k-1)(1+\alpha)},
\]
\[
 A_k^+-A_k^-=g(0)\, e_n,
\]
where \(C_0>0\) depends only on \(n\), \(\lambda\), \(\Lambda\), and \(\alpha\), and such that
\[
\|u^\pm-l_k^\pm \|_{L^\infty( \Omega^\pm_{\rho^k})} \leq \rho^{k(1+\alpha)}.
\]

We prove the claim by induction. For $k=1$, by the normalization, we are under the assumptions of Lemma~\ref{Lm:geom_it}. Indeed, by $(i)$, we have that 
\begin{align*}
\|\psi\|_{C^{1,\alpha}(\overline{\C_1'})} &= \|\psi-\psi(0)\|_{L^\infty(\C_1')} + \|\nabla' \psi-\nabla'\psi(0)\|_{L^\infty(\C_1')} + [\nabla\psi]_{C^{0,\alpha}(\overline{\C_1'})}\\
&\leq 3[\psi]_{C^{1,\alpha}(0)} \leq 3 \delta_0 \leq \delta.
\end{align*}
Moreover, by $(ii)$, we have \(\| u \|_{L^\infty(\C_1)}\leq 1\), $|g(0)|\leq 1$, and
\[  \|g-g(0)\|_{L^\infty(\Gamma)} + C_{f^-}+C_{f^+} \leq [g]_{C^{0,\alpha}(0)} +C_{f^-}+C_{f^+} \leq \delta_0 \leq \delta/3. \]
Hence, by Lemma~\ref{Lm:geom_it}, there exist $l_1^\pm (x) = A_1^\pm \cdot x + b_1$, with 
$$
|A_1^\pm - A_0^\pm| +|b_1-b_0| \leq C_0, 
$$
$$
A_1^+-A_1^-= g(0)e_n,
$$
where $A_0^\pm = 0, \ b_0=0$, such that
\begin{align*}
\|u^\pm -l_1^\pm \|_{L^\infty(\Omega_\rho^\pm)} &\leq \rho^{1+\alpha}.
\end{align*}

For the induction step, assume that the claim holds for some $k\geq 1$, and let $l^\pm_k$ be such affine functions. Denote by
\[ \widetilde {\Omega}^\pm_{k} =\{(x,t)\in \C_1 : (\rho^k x,\rho^{2k} t) \in \Omega^\pm\}\quad\hbox{and}\quad \widetilde \Gamma_{k} =\{ (x,t) \in \C_1 : (\rho^k x,\rho^{2k} t) \in \Gamma \}. \]
Note that 
if $\widetilde \Gamma_k$ is the graph of $\psi_k$ in $\C_1'$, then $\psi_{k}(x',t) = \rho^{-k}\psi(\rho^k x',\rho^{2k} t).$
In particular, $\nabla' \psi_{k} (x',t) =\nabla' \psi(\rho^k x,\rho^{2k} t) $, and thus, for $(x,t)\in \widetilde\Gamma_{k}$, if $\nu_{k}(x,t)$ is the normal pointing at $\widetilde\Omega_{k}^+$, then $\nu_{k}(x,t) = \nu(\rho^k x,\rho^{2k} t) $.
Define $l_k= l_k^+ \chi_{\widetilde\Omega_{k}^+} + l_k^- \chi_{\widetilde\Omega_{k}^-}$.
Consider the rescaled function
\[ v(x,t) = \frac{u(\rho^k x,\rho^{2k} t) - {l}_k(\rho^k x,\rho^{2k} t)}{\rho^{k(1+\alpha)}}\qquad\hbox{for}~(x,t)\in \overline\C_1. \]
Then $v$ satisfies
\begin{equation*} 
\begin{cases}
\p_tv-F_k^\pm(D^2 v) = f_k^\pm & \hbox{in}~\widetilde\Omega_{k}^\pm,\\
v_{\nu_k}^+ - v_{\nu_k}^- = g_k & \hbox{on}~\widetilde\Gamma_{k}\\
\end{cases}
\end{equation*}
in the viscosity sense, where 
\begin{align*}
F_k^\pm(M) &= \rho^{k(1-\alpha)} F^\pm( \rho^{k(\alpha-1)}M) \quad\hbox{for}~M\in \mathcal{S}^n,\\
f_k^\pm(x,t) &= \rho^{k(1-\alpha)} f^\pm(\rho^k x,\rho^{2k} t) \quad \hbox{for}~(x,t)\in \widetilde\Omega_{k}^\pm,\\
g_k(x,t) & = \rho^{- k \alpha} (g(\rho^k x,\rho^{2k} t)-g(0)\nu_n(\rho^k x,\rho^{2k} t)) \quad \hbox{for}~(x,t)\in \widetilde\Gamma_{k}.
\end{align*}
By the induction hypothesis,
$\| v \|_{L^\infty(\C_1)} \leq 1.$
We also have that
\begin{equation} \label{eq:cttf}
\Big( \fint_{\C_r \cap \widetilde\Omega_k^\pm} |f_k^\pm(x,t)|^{n+1}\, dxdt\Big)^\frac{1}{n+1}
\leq C_{f^\pm} r^{\alpha-1}.
\end{equation}
Hence, $C_{f_k^\pm} = C_{f^\pm}$, and $C_{f_k^-}+C_{f_k^+} \leq \delta_0$.
Moreover,
\begin{equation} \label{eq:estg}
\|g_k\|_{L^\infty(\widetilde\Gamma_k)} \leq [g]_{C^{0,\alpha}(0)}+ [\nu_n]_{C^{0,\alpha}(0)} \leq \delta_0+\delta_0=2\delta_0.
\end{equation}
However, we cannot apply Lemma~\ref{Lm:geom_it} to $v$ since it has a jump discontinuity on $\widetilde\Gamma_{k}$. In fact, if
$v^\pm = v \big|_{\overline{\widetilde \Omega^\pm_{k}}},$
then for $(x,t)\in \widetilde\Gamma_{k}$, by the normalization $(i)$, and the induction hypothesis, we have
\begin{align} \label{eq:jump}
|(v^- - v^+)(x,t)| &= \frac{|l_k^-(\rho^k x)-l_k^+(\rho^k x)|}{\rho^{k(1+\alpha)}} = \rho^{-k \alpha} |g(0)| |x_n| \nonumber \\
&\leq \rho^{-k \alpha} \sup_{(x,t) \in \widetilde\Gamma_{k}} |x_n|  \\
&\leq \sup_{(x',t)\in \C_1'} \frac{|\psi_{k}(x',t)|}{ \rho^{k \alpha} } \leq [\psi]_{C^{1,\alpha}(0)} \leq \delta_0.\nonumber
\end{align}

Let $w\in C(\C_1)$ be the viscosity solution of the Dirichlet problems:
\begin{equation*}
\begin{cases}
\p_tw-F_k^\pm (D^2 w) = 0 & \hbox{in}~\widetilde\Omega_{k}^\pm,\\
w = \frac{1}{2}(v^++v^-) & \hbox{on}~\widetilde\Gamma_{k},\\
w=v & \hbox{on}~\p_p \C_{1}.
\end{cases}
\end{equation*}
We will prove that $w$ satisfies the assumptions of Lemma~\ref{Lm:geom_it}. Indeed, by the maximum principle, $\|w\|_{L^\infty(\C_1)} \leq \| v\|_{L^\infty(\p_p\C_1)}\leq 1$.
Moreover, $v^\pm - w^\pm \in \mathcal{S}(f^\pm_k)$ in $\widetilde \Omega_k^\pm$, $v^\pm - w^\pm = \pm \tfrac{1}{2} \rho^{-k\alpha} |g(0)| x_n$ on $\widetilde \Gamma_k$, and $v^\pm - w^\pm =0$ on $\partial_p \widetilde \Omega_k^\pm \setminus \widetilde \Gamma_k.$
Then, by the classical ABP estimate, \eqref{eq:jump}, and $(ii)$,
\begin{equation} \label{eq:smalldiff}
\|v^\pm - w^\pm \|_{L^\infty(\widetilde\Omega_k^\pm)} \leq \|v^\pm - w^\pm \|_{L^\infty(\widetilde\Gamma_k)} + C \|f_k^\pm\|_{L^{n+1}(\widetilde\Omega_k^\pm)} \leq \widetilde C \delta_0,
\end{equation}
taking $\widetilde C>0$ large. 
By boundary pointwise $C^{1,\alpha}$ estimates \cite{Ma-Moreira-Wang}, for any $(x_0,t_0)\in \widetilde\Gamma_k\cap \C_{3/4}$, 
\begin{align} \label{eq:regdiff}
|\nabla(v^\pm - w^\pm)(x_0,t_0)| & \leq {C} \big (\|v^\pm - w^\pm \|_{L^\infty(\widetilde\Omega_k^\pm)} + \tfrac{1}{2} \rho^{-k\alpha} \|\psi_k \|_{C^{1,\alpha}(x_0,t_0)} +C_{f_k^\pm} \big) \leq \widetilde C\delta_0,
\end{align}
where the last inequality follows from \eqref{eq:cttf}, \eqref{eq:smalldiff}, and the normalization $(i)$. 

Let $(x_0,t_0)\in \widetilde \Gamma_{k} \cap \C_{3/4}$. Assume that $\varphi$ touches $w$ by above at $(x_0,t_0)$ in a small neighborhood of $(x_0,t_0)$ contained in $\C_{3/4}$.
Then $\phi = \varphi - (w-v)$ is touches $v$ by above at $(x_0,t_0)$, and
\[ \phi_{\nu_k}^+(x_0,t_0) - \phi_{\nu_k}^-(x_0,t_0) \geq g_k(x_0,t_0). \]
Therefore, at $(x_0,t_0)$, we have that 
\[ \varphi_{\nu_k}^+ - \varphi_{\nu_k}^- \geq g_k + (w^+-v^+)_{\nu_k}-(w^--v^-)_{\nu_k} =: \widetilde{g}_k. \]
Moreover, by \eqref{eq:regdiff}, we get $|\widetilde g_k(0)|\leq 2\widetilde C\delta_0\leq \rho^{1+\alpha}<1$ (note $g_k(0)=0)$, and together with \eqref{eq:estg}, 
\begin{align} \label{eq:gk}
\|\widetilde g_k - \widetilde g_k(0)\|_{L^\infty(\widetilde \Gamma_{k} \cap \C_{3/4})}
& \leq 2\delta_0 + 4\widetilde C \delta_0 \leq \delta.
\end{align}
Similarly, if $\varphi$ is a test function touching $w$ from below at $(x_0,t_0)$, then
\[ \varphi_{\nu_k}^+(x_0,t_0) - \varphi_{\nu_k}^-(x_0,t_0) \leq \widetilde g_k(x_0,t_0). \]
Hence, $w_{\nu_k}^+ - w_{\nu_k}^- = \widetilde g_k$ on $\widetilde\Gamma_{k} \cap \C_{3/4}$ in the viscosity sense.
Applying Lemma~\ref{Lm:geom_it} to $w$, there is a constant $\widetilde C_0>0$, depending only on $n$, $\lambda$, $\Lambda$, and $\alpha$, and
affine functions $\widetilde l^\pm(x)= A^\pm\cdot x + \widetilde b$, with $|\widetilde A^\pm|+|\widetilde b|\leq  \widetilde C_0$, and $\widetilde A^+-\widetilde A^-=\widetilde g_k(0) e_n$, such that
\begin{align} \label{eq:estw_1}
\| w - \widetilde l^\pm \|_{L^\infty(\Omega_\rho^\pm)} &\leq \rho^{1+\alpha}/4.
\end{align}
Note that the estimate holds with $\rho^{1+\alpha}/4$ instead of $\rho^{1+\alpha}$, taking $\rho$ smaller in the proof of Lemma~\ref{Lm:geom_it}. 
Since $\widetilde g_k(0)\neq 0$, in general, we need to correct the polynomials. Define
$$
\hat l^\pm := \widetilde l^\pm \mp \tfrac{\widetilde g_k(0)}{2} x_n = \hat A^\pm \cdot x + \widetilde b.
$$
Then $\hat A^+_n - \hat A^-_n = (\widetilde A_n^+-\widetilde A_n^-)-\big( \tfrac{\widetilde g_k(0)}{2}+\tfrac{\widetilde g_k(0)}{2}\big)=\widetilde g_k(0)-\widetilde g_k(0)=  0$, and by \eqref{eq:estw_1}, 
$$
\| w - \hat l^\pm \|_{L^\infty(\widetilde \Omega_\rho^\pm)}  \leq \| w - \widetilde l^\pm \|_{L^\infty(\widetilde \Omega_\rho^\pm)} + \tfrac{|\widetilde g_k(0)|}{2} \rho\leq \rho^{1+\alpha}/4 +  \rho^{1+\alpha}/4 \leq \rho^{1+\alpha}/2.
$$
Hence, by \eqref{eq:smalldiff} and \eqref{eq:estw_1}, we get 
\begin{align*}
\| v - \hat l^\pm \|_{L^\infty(\widetilde \Omega^\pm_\rho)} & \leq \| v- w \|_{L^\infty(\C_\rho)} + \| w - \hat l^\pm \|_{L^\infty(\widetilde \Omega^\pm_\rho)} \leq  \widetilde C \delta_0 +\rho^{1+\alpha}/2 \leq \rho^{1+\alpha}.
\end{align*}
In particular, for any $(x,t)\in \widetilde \Omega^\pm_\rho$, we have
\begin{align*}
\left|\frac{u^\pm(\rho^k x,\rho^{2k}t) - {l}_k^\pm(\rho^k x)}{\rho^{k(1+\alpha)}} - \hat l^\pm(x) \right| \leq \rho^{1+\alpha},
\end{align*}
or equivalently, if $(y,s) = (\rho^k x,\rho^{2k}t) $, then for any $(y,s) \in \Omega^\pm_{\rho^{k+1}}$,
\begin{equation} \label{eq:indstep}
| u^\pm(y,s) - l_k^\pm(y) - \rho^{k(1+\alpha)} \hat l^\pm(\rho^{-k}y)| \leq \rho^{(k+1)(1+\alpha)}.
\end{equation}

Define the affine approximations at the step $k+1$ as
\[ l_{k+1}^\pm (y) = l_k^\pm (y) + \rho^{k(1+\alpha)} \hat l^\pm(\rho^{-k}y). \]

If $l_{k+1}^\pm (y) = A_{k+1}^\pm \cdot y+ b_{k+1}$, then
$A_{k+1}^\pm = A_k^\pm + \rho^{k\alpha}\hat A^\pm$ and $b_{k+1} = b_k + \rho^{k(1+\alpha)} \widetilde b$.
By the induction hypothesis, since $|\widetilde A|+|\widetilde b|\leq \widetilde C_0$, $\hat A^\pm = \widetilde A^\pm \mp\frac{\widetilde g_k(0)}{2}e_n$, and $\hat A^+-\hat A^-=0$, we get
\begin{align*}
\rho^{k} |A^\pm_{k+1}-A^\pm_{k}| + |b_{k+1}-b_{k}| \leq C_0 \rho^{k(1+\alpha)},
\end{align*}
$$
A_{k+1}^+-A_{k+1}^- = g(0)e_n,
$$
taking $C_0>0$ large enough, depending only on $n$, $\lambda$, $\Lambda$, and $\alpha$.
Moreover, by \eqref{eq:indstep}, we see that
\[ \| u^\pm - l_{k+1}^\pm \|_{L^\infty(\Omega^\pm_{\rho^{k+1}})} \leq \rho^{(k+1)(1+\alpha)}. \]
By induction, the claim holds for all $k\geq 1$, and we conclude the proof.
\end{customproof}

\appendix
\section{Hopf lemma for fully nonlinear parabolic equations} \label{app:hopf}

In this section, we prove a Hopf lemma for fully nonlinear parabolic equations. In what follows, we
call a constant universal if it depends only on $n$, $\lambda$, and $\Lambda$. 

\begin{Definition}[Dini function]
    A function $\omega : [0,\infty)\to [0,\infty)$ is called a Dini function if $\omega$ is nondecreasing and there is some $r_1>0$ such that
    $$
    \int_0^{r_1} \frac{\omega(r)}{r}\, dr< \infty.
    $$
\end{Definition}

\begin{Definition}[Interior $C^{1,\dini}$ condition] \label{def:intdini}
We say that $\Omega\subset \R^{n+1}$ satisfies the interior $C^{1,\dini}$ condition at $0\in \p_p \Omega$ if there exist $\bar r>0$ and a Dini function $\omega_\Omega$ such that for all $0<r\leq\bar r$,
$$
\C_{r} \cap \big\{x_n >  r \omega_\Omega(r) \big\} \subset \C_{r}\cap \Omega.
$$
\end{Definition}

\begin{Theorem}[Hopf lemma] \label{thm:hopf}
   Let $\Omega \subset\R^{n+1}$ be a domain satisfying the interior $C^{1,\dini}$ condition at $0\in \p_p \Omega$. Assume that $\bar r\geq 1$ and $\omega_\Omega(1)\leq 1/4$. Let $u$ satisfy
    $$
    \begin{cases}
        u \in {S}(0) \text{ in } \Omega\cap\C_1,\\
        u\geq 0 \text{ in } \Omega\cap\C_1,\\
        u(0)=0.
    \end{cases}
    $$
    Then for any $l\in \p B_1$ and $l_n=l\cdot e_n>0$, we have that
    \begin{equation} \label{eq:hopfest}
    u(rl,0)\geq cl_n u(e_n/2,-3/4)r,
    \end{equation}
    for all $0<r<r_0$, where $c>0$ and $r_0$ depend only on $n$, $\lambda$, $\Lambda$, and $\omega_\Omega$, and $r_0$ depends on $l$.
\end{Theorem}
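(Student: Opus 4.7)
My plan is to use the classical Hopf barrier comparison, adapted to the Dini setting by iteration across dyadic scales.

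First, I would apply the interior parabolic Harnack inequality for the Pucci class $S(0)$ to propagate the value $u(e_n/2,-3/4)$ to a uniform lower bound on a compact interior piece. Since $\omega_\Omega(1)\leq 1/4$, a fixed cylinder such as $K=\overline{B}_{1/8}(e_n/2)\times[-5/8,-1/2]$ is compactly contained in $\Omega\cap\C_1$, and a finite universal chain of Harnack applications would yield
$$
u\geq c_H\,u(e_n/2,-3/4)\quad\text{on }K,
$$
for a universal constant $c_H>0$.

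Second, I would construct a barrier $\phi\in C(\overline{\Omega\cap\C_1})$, smooth in $(\Omega\cap\C_1)\setminus K$, satisfying $\p_t\phi-\Mm(D^2\phi)\leq 0$ in $(\Omega\cap\C_1)\setminus K$, with $\phi=1$ on $\p K$, $\phi=0$ on $\p_p(\Omega\cap\C_1)$, together with the key linear lower bound $\phi(rl,0)\geq c\,l_n\,r$ for $0<r<r_0(l)$. Existence and interior regularity of $\phi$ would follow from the Wang theory used in Lemma~\ref{Lm:Barrier11}. Since $u\in\overline{\mathcal S}(0)$ is a viscosity supersolution of $\p_t v-\Mm(D^2 v)=0$, comparing it with $c_H u(e_n/2,-3/4)\phi$ on $(\Omega\cap\C_1)\setminus K$ (using the bound from the first step to verify the ordering on $\p K$) would give $u\geq c_H\,u(e_n/2,-3/4)\,\phi$ there. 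Evaluating at $(rl,0)$, which belongs to this set for $r<r_0(l)$ by the interior $C^{1,\dini}$ inclusion and $\omega_\Omega(r)\to 0$, would yield the stated bound.

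The whole argument therefore reduces to the linear lower bound $\phi(rl,0)\gtrsim l_n r$ from the boundary point. In the $C^{1,\alpha}$ case this follows from a single explicit comparison with an exponential barrier in the direction $l$; with only $C^{1,\dini}$, a one-scale argument would lose a logarithm. I would instead iterate on dyadic scales $r_k=2^{-k}$: rescale so that the domain has modulus $\omega_\Omega(r_k)\to 0$ at unit size, apply a quantified flat-boundary Hopf estimate at each scale to extract a linear contribution, and sum the resulting corrections. The Dini integrability $\int_0^{r_1}\omega_\Omega(r)/r\,dr<\infty$ is precisely what keeps the cumulative correction bounded and preserves the linear rate. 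This dyadic summation will be the main technical obstacle: the non-power-law modulus prevents a single-scale barrier from closing the estimate, and the errors at each scale must be dominated by the Dini integral rather than by a geometric series.
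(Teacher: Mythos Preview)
Your global strategy—interior Harnack to get a positive value at a fixed interior point, followed by a geometric-scale iteration whose losses are summed via the Dini integral—is exactly the paper's approach. Two points, however, need correction.

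First, the reduction to the barrier $\phi$ is circular. Your $\phi$ is itself a nonnegative element of $S(0)$ in $\Omega\cap\C_1$ vanishing at the origin, so proving $\phi(rl,0)\gtrsim l_n r$ is precisely the statement of the theorem applied to $\phi$. The paper runs the iteration directly on $u$ (after normalizing $u(e_n/2,-3/4)=1$), and you should too; introducing $\phi$ buys nothing.

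Second, and more substantively, a ``quantified flat-boundary Hopf estimate'' at each scale is not enough to close the iteration. A Hopf bound only gives $u\geq c\,x_n$ from interior positivity; it does not tell you how the error inherited from the previous scale contributes to the slope at the next. The paper's mechanism is a decomposition at each step: writing $u-(\widetilde a - a_k)x_n = v + w$ in $\C_{\rho^{k+1}}^+$, where $v$ solves the Pucci problem in the \emph{flat} half-cylinder with zero data on the flat face and data $-\rho^k A_k$ on the rest of the parabolic boundary, and $w$ is the remainder. The flat-boundary pointwise $C^{1,\bar\alpha}$ estimate (Lemma~\ref{lem:flatbdryreg}) applied to $v$ converts the old error $\rho^k A_k$ into a slope correction $\bar a\leq \widetilde C A_{k+1}$ plus a strictly smaller residual, while the maximum principle on $w$ picks up only the boundary-shape error $\omega(\rho^{k+1})$. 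Setting $A_{k}=\max\{\omega(\rho^{k}),\rho^{\alpha_0}A_{k-1}\}$, the total slope drift is controlled by $\sum_k A_k$, and it is precisely this sum—not a naive telescoping of Hopf contributions—that the Dini condition makes finite. Without the $C^{1,\bar\alpha}$ estimate (or an equivalent device producing a geometric improvement of the residual), your iteration will not show that the lower slope stays bounded away from zero.
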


We will use the following lemma proved in \cite[Lemma~2.1]{Lian-Zhang-2022}.

\begin{Lemma} \label{lem:flathopf}
    Let $u$ satisfy
    $$
\begin{cases}
    u\in S(0) & \text{ in } \C_1^+,\\
        u \geq 0 & \text{ in } \C_1^+,\\
    u = 0 & \text{ on } \C_1'.
\end{cases}
    $$
Assume that $u(e_n/2,-3/4)=1$. Then there is a universal constant $C>0$ such that
$$
u(x,t) \geq C x_n \quad \text{ in } \C_{1/2}^+.
$$
\end{Lemma}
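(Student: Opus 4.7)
The plan is to combine a dyadic iteration with Lemma~\ref{lem:flathopf} (the flat Hopf lemma), using the Dini condition to control the accumulation of errors from the non-flatness of $\partial_p\Omega$. Assuming $u(e_n/2, -3/4) > 0$ (otherwise the estimate is trivial), I would normalize to $u(e_n/2, -3/4) = 1$. Setting $r_k = 2^{-k}$, the goal is to prove by induction that
\[
u(r_k e_n/2, -3 r_k^2/4) \;\geq\; c_0\, r_k \prod_{j=0}^{k-1}\bigl(1 - C\,\omega_\Omega(r_j)\bigr),
\]
for universal constants $c_0, C > 0$. Since $\omega_\Omega$ is monotone and Dini, the discrete sum $\sum_j \omega_\Omega(r_j)$ is comparable to the convergent integral $\int_0^{1}\omega_\Omega(r)/r\,dr$, so the infinite product has a positive universal lower bound. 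This yields $u(r_k e_n/2, -3 r_k^2/4) \geq c\, r_k$ for every $k$.

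The inductive step proceeds by rescaling. By the interior $C^{1,\dini}$ condition, the function $u_k(y, s) := r_k^{-1}\, u(r_k y, r_k^2 s)$ is a non-negative $\mathcal{S}(0)$ function on $\{y_n > \omega_\Omega(r_k)\} \cap \C_1$. To match the hypotheses of Lemma~\ref{lem:flathopf}, I would shift vertically so that $\{y_n = \omega_\Omega(r_k)\}$ becomes the flat plane $\{z_n = 0\}$, and subtract off the minimum $\mu_k$ of $u_k$ on this plane. The resulting function lies in $\mathcal{S}(0)$, is non-negative on $\C_{1/2}^+$, and vanishes on $\C_{1/2}'$, so Lemma~\ref{lem:flathopf} gives linear growth in the normal direction with a universal slope. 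Pulling back to the original scale and evaluating at $(r_{k+1} e_n/2, -3 r_{k+1}^2/4)$ recovers the previous-scale value with a multiplicative loss of $(1 - C\omega_\Omega(r_k))$, because the subtracted quantity $\mu_k$ is controlled via interior Harnack by $\omega_\Omega(r_k)$ times the reference value at the previous step.

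Once the dyadic lower bound $u(r_k e_n/2, -3 r_k^2/4) \geq c\, r_k$ is in hand, I would extend to continuous scales and arbitrary direction $l \in \partial B_1$ with $l_n > 0$ via an interior Harnack chain. For $r \in [r_{k+1}, r_k]$, a universal-length chain of fixed-shape cylinders contained in $\Omega$ propagates the bound from $(r_k e_n/2, -3 r_k^2/4)$ to $(r e_n, 0)$, yielding $u(r e_n, 0) \geq c'\, r$. For a general direction $l$, the chain can then be continued from $(re_n, 0)$ to $(rl, 0)$ through interior points; the length of this continuation is controlled by $1/l_n$, which produces precisely the factor $l_n$ in the final estimate \eqref{eq:hopfest} and forces $r_0$ to depend on $l$ (small enough that every link of the chain fits inside $\Omega$ using the interior $C^{1,\dini}$ condition).

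The main obstacle is tracking the multiplicative errors cleanly in the inductive step: proving $\mu_k \leq C\,\omega_\Omega(r_k)\, u(r_{k-1} e_n/2, -3 r_{k-1}^2/4)$ (rather than a weaker bound that would spoil the recursion) requires combining the interior Harnack inequality with a boundary H\"older estimate on $u_k$ near $\{y_n = \omega_\Omega(r_k)\}$. It is exactly this clean bound, plus the Dini hypothesis ensuring $\prod_j(1 - C\omega_\Omega(r_j)) > 0$, that turns the local flat Hopf lemma into the nonflat Hopf estimate claimed.
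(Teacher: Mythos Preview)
Your proposal does not address the stated lemma at all. Lemma~\ref{lem:flathopf} is the \emph{flat} Hopf lemma: the domain is $\C_1^+$, the boundary piece is $\C_1'$, and there is no $\omega_\Omega$, no Dini condition, and no non-flatness to iterate over. What you have written is an outline of a proof of Theorem~\ref{thm:hopf} (the non-flat Hopf lemma for $C^{1,\dini}$ domains), and you explicitly invoke Lemma~\ref{lem:flathopf} as a tool in your argument. You are therefore using the very statement you were asked to prove as a black box, while proving a different result.

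For the record, the paper does not prove Lemma~\ref{lem:flathopf} either; it is quoted from \cite[Lemma~2.1]{Lian-Zhang-2022}. If your intent was actually Theorem~\ref{thm:hopf}, your multiplicative-product scheme is a plausible alternative to the paper's additive scheme, which tracks linear corrections $a_k x_n$ and auxiliary Dirichlet solutions $v$, and relies on the boundary $C^{1,\bar\alpha}$ estimate of Lemma~\ref{lem:flatbdryreg} rather than on subtracting minima. That said, your inductive step has a real gap: bounding the subtracted minimum $\mu_k$ by $C\,\omega_\Omega(r_k)$ times the previous reference value does not follow from ``interior Harnack plus boundary H\"older,'' because $u_k$ does not vanish on the shifted plane $\{y_n=\omega_\Omega(r_k)\}$ and you have no a priori control of its size there at the correct scale. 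The paper avoids this issue precisely by never subtracting a minimum, working instead with $u-\widetilde a x_n + a_k x_n$ and a comparison function $v$ solving a clean Dirichlet problem in $\C_r^+$.
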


We also need the $C^{1,\alpha}$ boundary estimate for solutions to problems with flat boundaries given in \cite[Theorem~2.8]{Lian-Zhang-2022}.

\begin{Lemma} \label{lem:flatbdryreg}
    Let $u$ satisfy
   $$
    \begin{cases}
        u \in S(0) & \text{ in } \C_1^+,\\
        u=0 & \text{ on } \C_1'.
    \end{cases}
   $$
   Then $u\in C^{1,\bar \alpha}(0)$, i.e., there exists a constant $\bar a$ such that
   $$
    |u(x,t) - \bar a x_n| \leq C_1  x_n (|x|^2+|t|)^{\bar \alpha/2} \|u\|_{L^\infty(\C_1^+)}, \quad \text{for all } (x,t)\in \C_{1/2}^+,
   $$
   and 
   $$
|\bar a|\leq C_1 \|u\|_{L^\infty(\C_1^+)},
$$
where $0<\bar\alpha <1$ and $C_1>0$ are universal constants.
\end{Lemma}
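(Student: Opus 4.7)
The plan is to establish the Hopf lower bound by iterating the flat-boundary Hopf lemma (Lemma~\ref{lem:flathopf}) and the flat $C^{1,\bar\alpha}$ estimate (Lemma~\ref{lem:flatbdryreg}) along a geometric sequence of scales $r_k = \rho^k$, then summing the scale-by-scale errors using Dini summability. First I would normalize: letting $M := u(e_n/2, -3/4)$, either $M = 0$, in which case the parabolic Harnack inequality applied inside the flat half-cylinder $\C_1 \cap \{x_n > 1/4\} \subset \Omega$ forces $u \equiv 0$ on a large region and the conclusion is trivial, or $M > 0$ and we divide through to assume $u(e_n/2, -3/4) = 1$ and aim to show $u(rl, 0) \geq c\, l_n\, r$.

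Next I would set up the iteration. Fix a small universal $\rho \in (0, 1/4)$, let $r_k = \rho^k$ and $\omega_k := \omega_\Omega(r_k)$, and observe that the interior $C^{1,\dini}$ condition yields the flat strip $F_k := \C_{r_k} \cap \{x_n > r_k \omega_k\} \subset \Omega$, with $\sum_k \omega_k < \infty$ by the Dini property. The main claim is the existence of a universal $c_0>0$ and a sequence $a_k \geq c_0$ such that
$$u(x,t) \geq a_k\,(x_n - r_k \omega_k) \qquad \text{on } \C_{r_k/2} \cap \{x_n > r_k \omega_k\}.$$
The base case uses Lemma~\ref{lem:flathopf} applied to the shift $(y,s)\mapsto u(y + \omega_0 e_n, s)$, which is nonnegative and Pucci-class on $\C_{1/2}^+$ (since $\omega_0 \leq 1/4$) and equals $1$ at the reference point. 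For the inductive step I would rescale $u_k(y,s) := u(r_k y, r_k^2 s)/r_k$, use Lemma~\ref{lem:flathopf} to transfer the inductive lower bound from the reference height down to the flat boundary $\{y_n = \omega_k\}$, and then invoke Lemma~\ref{lem:flatbdryreg} to extract an optimal affine tangent $\bar a_{k+1}(y_n - \omega_k)$ with $|\bar a_{k+1} - a_k| \leq C^{*} \omega_k$ for some universal $C^{*}$. Choosing $\rho$ so that $C^{*}\sum_k \omega_k \leq a_0/2$, the sequence $\{a_k\}$ stays uniformly bounded below by $c_0 := a_0/2$. Finally, for $l \in \partial B_1$ with $l_n > 0$ and $r$ small, I would pick $k$ with $r/\rho \leq r_k \leq r$ so that $(rl, 0) \in \C_{r_k/2} \cap \{x_n > r_k \omega_k\}$, giving
$$u(rl, 0) \geq a_k (r l_n - r_k \omega_k) \geq \tfrac{c_0}{2}\, l_n\, r$$
provided $r_0$ is small enough that $r_k \omega_k \leq (l_n/2)\,r$; this is where $r_0$ picks up its dependence on $l$.

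The main obstacle is the inductive step: across scales the effective flat boundary is vertically shifted by $r_k \omega_k$, and the linear lower bound must be transported consistently while controlling how the slope $a_{k+1}$ can degrade from $a_k$. The flat $C^{1,\bar\alpha}$ estimate is crucial here---it guarantees that the best affine approximation at scale $r_{k+1}$ differs from the inherited one by only $O(\omega_k)$, so that the cumulative degradation forms a convergent series by Dini summability, producing a uniform positive lower bound on all slopes. A secondary technicality is matching the rescaled reference point in the inductive hypothesis: since the point $(e_n/2, -3/4)$ is not self-similar under the dyadic rescaling, the lower bound at the reference height at scale $r_k$ has to be extracted via an interior Harnack chain inside the flat strip before applying Lemma~\ref{lem:flathopf}, and the constants from that chain must be absorbed into $C^{*}$.
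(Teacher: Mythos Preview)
Your proposal does not address the stated lemma. Lemma~\ref{lem:flatbdryreg} asserts a pointwise $C^{1,\bar\alpha}$ boundary estimate for the flat problem: the existence of a constant $\bar a$ with $|u(x,t)-\bar a x_n|\leq C_1 x_n(|x|^2+|t|)^{\bar\alpha/2}\|u\|_{L^\infty}$. What you have written is a proof sketch for Theorem~\ref{thm:hopf}, the Hopf lemma on $C^{1,\mathrm{Dini}}$ domains, whose conclusion is $u(rl,0)\geq c\,l_n\,r$. These are different statements with different hypotheses (your argument repeatedly uses the interior $C^{1,\mathrm{Dini}}$ condition and the Dini modulus $\omega_\Omega$, neither of which appears in Lemma~\ref{lem:flatbdryreg}) and different conclusions.

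Worse, your argument is circular for the stated goal: you explicitly invoke ``the flat $C^{1,\bar\alpha}$ estimate (Lemma~\ref{lem:flatbdryreg})'' as one of the two building blocks of your iteration. You cannot use the lemma you are asked to prove as an ingredient in its own proof.

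For the record, the paper does not prove Lemma~\ref{lem:flatbdryreg} either; it is quoted from \cite[Theorem~2.8]{Lian-Zhang-2022}. If you intended to sketch the proof of Theorem~\ref{thm:hopf} instead, your outline is broadly in the spirit of the paper's argument (an iteration over geometric scales combining Lemma~\ref{lem:flathopf} and Lemma~\ref{lem:flatbdryreg}, with Dini summability controlling the accumulated error), though the paper organizes the induction somewhat differently: rather than tracking a lower bound $u\geq a_k(x_n-r_k\omega_k)$, it tracks a one-sided deviation $\inf(u-\widetilde a x_n + a_k x_n)\geq -\rho^k A_k$ against a fixed slope $\widetilde a$ obtained once at the top scale, and proves $a_k\leq \widetilde a/2$ so the effective slope never drops below $\widetilde a/2$.
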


The proof of the Hopf Lemma follows the same lines as its elliptic counterpart \cite[Theorem~1.17]{Lian-Zhang-2023}.\medskip

\begin{customproof}{Theorem~\ref{thm:hopf}}
     Let $\Omega \subset\R^{n+1}$ be a domain satisfying the interior $C^{1,\dini}$ condition at $0\in \p_p \Omega$. Assume that $\bar r\geq 1$ and $\omega(1)\leq 1/4$.
     There exists $r_1>0$ such that
     \begin{equation} \label{eq:dinicond}
    \omega(r_1)\leq c_0 \quad \text{and} \quad \int_0^{r_1} \frac{\omega(r)}{r}\, dr \leq c_0,
     \end{equation}
     where we denoted $\omega=\omega_\Omega$, and $c_0\leq 1/4$ is some small universal constant to be determined. After rescaling, we may assume that $r_1=1$ and $u(e_n/2, -3/4) =1$. For $r>0$, let $\Omega_r^+ = \Omega \cap \C_r^+$. To prove \eqref{eq:hopfest}, it is enough to see that there are universal constants $0<\alpha_0, \rho <1$, $\widetilde C, \widetilde a>0$, and a nonnegative sequence $\{a_k\}$ such that for all $k\geq 0$,
     \begin{align}\label{eq:lowerest}
         \inf_{\Omega^+_{\rho^{k+1}}} (u - \widetilde a x_n + a_k x_n ) \geq - \rho^k A_k,\\ \label{eq:cauchy}
               a_k - a_{k-1} \leq \widetilde C A_k,\\ \label{eq:boundak}
               a_k \leq \frac{\widetilde a}{2},
     \end{align}
     where $a_{-1}=0$,  $A_0 = c_0$, and $A_{k} = \max\{\omega(\rho^k), \rho^{\alpha_0}A_{k-1} \}$ for all $k\ge 1$.

     Indeed, from the definition of $A_k$, we have
     $$
     \sum_{k=0}^\infty A_k \leq c_0 + \sum_{k=1}^\infty \omega(\rho^k) + \rho^{\alpha_0} \sum_{k=0}^\infty A_k,
     $$
     and thus, together with the Dini condition in \eqref{eq:dinicond}, we obtain 
\begin{align*}
\sum_{k=0}^{\infty} A_k 
&\leq \frac{1}{1 - \rho^{\alpha_0}} \Big(c_0+\sum_{k=1}^{\infty} \omega(\rho^k)\Big)\\
& = \frac{1}{1 - \rho^{\alpha_0}}\Big( c_0 + \frac{1}{1 - \rho} \sum_{k=1}^{\infty} \omega(\rho^k) \frac{\rho^{k-1} - \rho^k}{\rho^{k-1}} \Big)\\
&\leq \frac{1}{1 - \rho^{\alpha_0}} \Big( c_0 + \frac{1}{1 - \rho} \int_0^1 \frac{\omega(r)}{r}\, dr \Big)\\
&\leq \frac{c_0}{1 - \rho^{\alpha_0}}\Big(1+ \frac{1}{1 - \rho}\Big)\\
& \leq \frac{2c_0}{(1 - \rho^{\alpha_0})(1-\rho)} \leq 4c_0,
\end{align*}
provided $(1 - \rho^{\alpha_0}) (1 - \rho) \geq 1/2.$       
In particular, $A_k\to 0$ as $k\to \infty$. 
Let $l\in \p B_1$ with $l_n>0$. Then, there exists $k_0\geq 0$ such that
     \begin{equation} \label{eq:estAk}
         \frac{A_k}{\rho^2} \leq \frac{\widetilde a l_n}{4} \qquad \text{for all } k\geq k_0.
     \end{equation}
     Take $r_0 = \rho^{k_0}$. For $0<r<r_0$, there exists $k\geq k_0$ such that $\rho^{k+2}\leq r < \rho^{k+1}$. Then by \eqref{eq:lowerest}, \eqref{eq:boundak}, and \eqref{eq:estAk}, we have that
     $$
    u(rl, 0) \geq \widetilde a l_n r - a_k l_n r -\rho^k A_k \geq \frac{\widetilde a l_n r}{2} - \frac{A_k r}{\rho^2} \geq \frac{\widetilde a l_n}{4}r.
     $$
     which is the desired estimate in \eqref{eq:hopfest}. 
     
     We will prove \eqref{eq:lowerest}--\eqref{eq:boundak} by induction. Let $\widetilde Q=\C_{1/2}^+(c_0e_n,0)$ and $\widetilde T = \p_p \widetilde Q \cap \{x_n=1/2+c_0\}$. Since $c_0\leq 1/4$, then $\widetilde Q \subset \Omega\cap\C_1^+$. By the parabolic Harnack inequality,
     $$
    \inf_{\widetilde T} u \geq \widetilde c  u(e_n/2, -3/4) = \widetilde c,
     $$
     where $\widetilde c>0$ is a universal constant. 
     Let $\widetilde u$ be the solution to the Dirichlet problem
     $$
    \begin{cases}
        \partial_t \widetilde u - \Mm(D^2 \widetilde u)=0 & \text{ in } \widetilde Q,\\
        \widetilde u = \widetilde c & \text{ on } \widetilde T,\\
        \widetilde u = 0 & \text{ on } \p_p \widetilde Q.
    \end{cases}
     $$
     Then $u-\widetilde u \in \overline{\mathcal{S}}(0)$ in $\widetilde Q$ with $u\geq \widetilde u$ on $\p_p \widetilde Q$. By the maximum principle, $u\geq \widetilde u $ in $\widetilde Q$.
     By Lemma~\ref{lem:flathopf} (rescaled), there exist universal constants $\delta_1>0$ and $0<C_1<1/2$ such that
     $$
     u(x,t) \geq \widetilde u(x,t) \geq C_1 (x_n - c_0) \quad \text{ in } \C_{\delta_1}^+(c_0e_n,0).
     $$
     Since $u\geq 0$ in $\Omega\cap \C_1^+$, it follows that 
     $u(x,t) \geq C_1 (x_n-c_0)$ in $\Omega_{\delta_1}^+.$
     Taking $\widetilde a = C_1$, $a_0=0$, and $\rho \leq \delta_1$, and recalling that $A_0=c_0$, we get 
     $$
    \inf_{\Omega_{\rho}^+} ( u - \widetilde a x_n + a_0 x_n ) \geq - A_0,
     $$
     and thus, \eqref{eq:lowerest}--\eqref{eq:boundak} hold for $k=0$. 
     
     Assume the estimates hold for some $k\geq0$. Let $r=\rho^{k+1}$ and let $v$ solve
     $$
    \begin{cases}
        v_t - \Mm(D^2v) = 0 & \text{ in } \C_r^+,\\
        v=0 & \text{ on } \C_r',\\
        v = -\rho^k A_k & \text{ on } \p_p \C_r \setminus \C_r'.
    \end{cases}
     $$
         By the maximum principle, $-\rho^k A_k \leq v \leq 0$ in $\C_r^+$.
     Let $w = u - \widetilde a x_n + a_k x_n - v$. Then $w$ satisfies
     $$
    \begin{cases}
        w \in \underline{\mathcal{S}}(0) & \text{ in } \Omega_r^+,\\
        w\geq - \widetilde a x_n + a_k x_n & \text{ on } \p_p \Omega \cap \C_r^+,\\
        w \geq 0 & \text{ on } \p_p \C_r^+ \cap \overline\Omega,
    \end{cases}
     $$
     where the third inequality is satisfied by the induction hypothesis. 

     Next, we will estimate $v$ and $w$. By the boundary $C^{1,\alpha}$ estimate in Lemma~\ref{lem:flatbdryreg} and the maximum principle, there are universal $0<\bar \alpha<1$ and $\bar a\geq 0$ such that
     \begin{align*}
         \| v + \bar a x_n \|_{L^\infty(\C_{\rho r}^+)} & \leq C_1 \rho^{1+\bar\alpha} \|v\|_{L^\infty(\C_r^+)} \leq C_1 \rho^{1+\bar\alpha} \rho^kA_k 
         \leq C_1\rho^{\bar\alpha-\alpha_0} \cdot \rho^{k+1} A_{k+1},
     \end{align*}
      and
     $$
     \bar a \leq \frac{C_1}{r} \|v\|_{L^\infty(\C_r^+)}\leq \frac{C_1}{\rho^{k+1}} (\rho^k A_k)= C_1 \rho^{-1} A_k\leq C_1 \rho^{-1-\alpha_0} \cdot A_{k+1},
     $$
     where we used in both estimates that 
     $A_k \leq \rho^{-\alpha_0} A_{k+1}.$
     
     Take $\alpha_0=\bar \alpha/2$, $\rho < C_1 \rho^{\bar\alpha/2}$, and $\widetilde C = C_1 \rho^{-1-\alpha_0}$. Then
     \begin{equation} \label{eq:esta}
         \bar a \leq \widetilde C A_{k+1},
     \end{equation}
     and since $\Omega_{\rho^{k+2}}^+= \Omega_{\rho r}^+ \subseteq \C_{\rho r}^+$, we get
     \begin{align} \label{eq:estv}
         \|v +  \bar a x_n\|_{L^\infty(\Omega_{\rho^{k+2}}^+)} \leq \| v +  \bar a x_n\|_{L^\infty(\C_{\rho r}^+)}\leq C_1\rho^{\bar\alpha/2} \cdot \rho^{k+1} A_{k+1}\leq\frac{1}{2} \rho^{k+1} A_{k+1}. 
     \end{align}
To estimate $w$, we use the maximum principle, so that
\begin{equation} \label{eq:estw}
\inf_{\Omega_{\rho^{k+2}}^+}w \geq \inf_{\Omega_r^+} w \geq  \inf_{\p_p \Omega \cap \C_r^+} (- \widetilde a x_n + a_k x_n) \geq - \widetilde a r \omega(r) = -\widetilde a \rho^{k+1} \omega(\rho^{k+1}) \geq -\frac{1}{2} \rho^{k+1} A_{k+1},
\end{equation}
where we used that $A_{k+1} = \max\{\omega(\rho^k), \rho^{\alpha_0}A_{k-1} \} \geq \omega(\rho^k)\geq \omega(\rho^{k+1})$ and $0<\widetilde a < 1/2$.
Let $a_{k+1} = a_k + \bar a$. By \eqref{eq:esta}, the condition \eqref{eq:cauchy} holds for $k+1$.
Combining \eqref{eq:estv} and \eqref{eq:estw},
\begin{align*}
\inf_{\Omega_{\rho^{k+2}}^+} (u - \widetilde a x_n + a_{k+1} x_n )  &= \inf_{\Omega_{\rho^{k+2}}^+} (w+v+\bar a x_n) \\
&\geq \inf_{\Omega_{\rho^{k+2}}^+} w + \inf_{\Omega_{\rho^{k+2}}^+} (v+\bar ax_n)\\ 
&\geq -\frac{1}{2} \rho^{k+1} A_{k+1}+ -\frac{1}{2} \rho^{k+1} A_{k+1}\\
&= -\rho^{k+1} A_{k+1},
\end{align*}
so \eqref{eq:lowerest} is satisfied for $k+1$. It remains to check \eqref{eq:boundak}. Taking $c_0$ small enough,
\begin{align*}
    a_{k+1} \leq \sum_{j=0}^{k+1} |a_j-a_{j-1}| \leq \widetilde C \sum_{j=0}^\infty A_j \leq 4c_0 \widetilde C  \leq \frac{\widetilde a}{2}.
\end{align*}
By induction, the claim is true for all $k\geq 0$.
\end{customproof}

\section*{Acknowledgments}
This research was partially developed during D.J.'s visit to the Department of Mathematics at Rutgers University, whose warm hospitality is gratefully acknowledged. The visit was partially funded by University of Bologna funds \say{Attivit\`a di collaborazione con universit\`a del Nord America}, within the framework of the project \say{Interplaying problems in analysis and geometry}.

\Addresses
\end{document}